\theoremstyle{plain}
\newtheorem{prop}{Proposition}[section]
\newtheorem{thm}[prop]{Theorem}
\newtheorem{cor}[prop]{Corollary}
\theoremstyle{definition}
\newtheorem{point}[prop]{}
\newtheorem{defn}[prop]{Definition}
\begin{document}

\title{Homotopical decompositions of simplicial  and    Vietoris  Rips  complexes
}


\author{Wojciech Chach\'olski}
\thanks{Mathematics, KTH, S-10044
Stockholm, Sweden\email{wojtek@kth.se} \email{alvinj@kth.se}\email{scola@kth.se}  \email{tombari@kth.se} }
\author{ Alvin Jin }
\author{Martina Scolamiero}
\author{Francesca Tombari}

\maketitle

\begin{abstract}
Motivated by applications in Topological Data Analysis,  we consider decompositions of a simplicial complex induced by a cover of its vertices. We study how the homotopy type of such decompositions approximates the homotopy of the simplicial complex itself. The difference between the simplicial complex and such an approximation is quantitatively measured by means of the so called obstruction complexes. Our general machinery is then specialized to clique complexes, Vietoris-Rips complexes and Vietoris-Rips complexes of metric gluings.
For the latter we give metric conditions which allow to recover the first and zero-th homology of the gluing from the respective homologies of the components.

\end{abstract}

\section{Introduction}
\label{intro}
Homology is an example of an invariant  that is both calculable and geometrically informative. These two features  are key reasons  why invariants derived from homology   are fundamental  in Algebraic Topology in general and in Topological Data Analysis (TDA)~\cite{MR2476414}
in particular.  Calculability  is a consequence of the fact that homology converts homotopy push-outs into   Mayer-Vietoris exact sequences.  Decomposing a space into a homotopy push-out enables us  to extract the homologies of the decomposed space (global information) from the homologies of the spaces in the push-out (local information).   

Ability of extracting global information from  local is important. What is meant by local information  however depends on the 
input and the description of considered spaces. For example what is often understood as local information in TDA differs from
the local information described above (push-out decomposition). In TDA the input is typically a finite metric space. This information is then converted into spacial  information and in this article we focus on the so called Vietoris-Rips construction~\cite{MR1368659} for that purpose.   Homologies extracted from this space give rise to  invariants of the metric space used in TDA such as persistent homologies~\cite{MR1861130,Delf-Edel,MR2405684,MR1434543,Frosini-Landi}, bar-codes~\cite{Scopigno04persistencebarcodes}, stable ranks~\cite{MR4057607,MR3735858}, or persistent landscapes~\cite{MR3317230}.  This conversion process, from metric  into spacial information,  does not in general transform the gluing  of metric spaces~\cite{MR1400226} into  homotopy push-outs and homotopy colimits of simplicial complexes.  The aim of this paper is to understand how close such data driven decompositions are to decompositions into homotopy push-outs. Our work was inspired by~\cite{MR3824247} and~\cite{NewHenry}, and grew out of   realisation that analogous statements hold true for  arbitrary simplicial complexes and not just Vietoris-Rips complexes.  To get these  general statements we use  categorical techniques. This enables us to prove stronger results using    arguments that for us are more transparent.

The most general input for our investigation is a simplicial complex $K$ and a cover $X\cup Y=K_0$ of its set of vertices. 
In this article we study the map $K_X\cup K_Y\subset K$ where $K_X$ and $ K_Y$ are subcomplexes of $K$ consisting of all these simplices of $K$ 
which are subsets of $X$ and  $Y$  respectively.  The goal is to estimate the homotopy fibers of this inclusion. We do that in terms of 
obstruction complexes  $\text{St}(\sigma,X\cap Y):=\{\mu\subset X\cap Y\ |\  0\leq|\mu|\text{ and } \mu\cup \sigma\in K\}$
indexed by  simplices  $\sigma$ in $K$ (see Definition~\ref{asdfsaahdfg}). Our main result, Theorem~\ref{adfsdfhgd},  states that the homotopy  fibers of  $K_X\cup K_Y\subset K$ are in the same cellular class (see Paragraph~\ref{asdfdsfhgdf}, and~\cite{MR1397724,MR1320991,MR1392221})  as the obstruction complexes  $\text{St}(\sigma,X\cap Y)$ for all $\sigma$ in $K$ such that $\sigma\cap X\not=\emptyset$, $\sigma\cap Y\not=\emptyset$, and
$\sigma\cap X\cap Y=\emptyset$. For instance (see Corollary~\ref{sdfdfghsd}.1) if, for all such $\sigma$, the obstruction complex  $\text{St}(\sigma,X\cap Y)$ is contractible, then $K_X\cup K_Y\subset K$ is a weak equivalence and consequently $K$ decomposes  as a homotopy pushout
$\text{hocolim}(K_X\hookleftarrow K_{X\cap Y}\hookrightarrow K_Y)$ leading to  a Mayer-Vietoris exact sequence. Another instance of our result (see Corollary~\ref{sdfdfghsd}.2) states that if these obstruction complexes have trivial homology in degrees not exceeding $n$, then so do 
the homotopy fibers of $K_X\cup K_Y\subset K$ and consequently this map induces an isomorphism on homology in  degrees not exceeding $n$, leading to a partial Mayer-Vietoris exact sequence. Yet another consequence (see Corollary~\ref{sdfdfghsd}.3) is that if these obstruction complexes have $p$-torsion homology for a prime $p$, then, for any field $F$ of characteristic  different than $p$,  the inclusion $K_X\cup K_Y\subset K$ induces an isomorphism on homology with coefficients in $F$ leading again to a Mayer-Vietoris exact sequence.

In section~\ref{dsfgdfhsfghj} we specialise our theorem  about the cellularity of  the homotopy fibers of the  inclusion $K_X\cup K_Y\subset K$ to the case when $K$ is a  clique complex and give some conditions that imply the   assumptions of the theorem  in this case.
Obtained results  in principle generalize all the statements proven in~\cite{MR3824247,NewHenry} for Vietoris-Rips complexes. 
The point we would like to make is that these statements  are not about Vietoris-Rips  complexes but rather about these complexes being clique.
In particular triangular inequality of the input metric space is not needed for our statements to hold.  We then prove  Theorem~\ref{easdfgsdghj} 
for which  it is essential that considered complex is  the Vietoris-Rips complex of the  metric gluing of pseudo-metric spaces for which the triangular inequality is satisfied. This theorem gives 2 connectedness of the relevant homotopy fibers and hence can be used to calculate
 $H_1$ and $H_0$ of the gluing in terms of $H_1$'s and $H_0$'s of the components and the intersection.

\section{Small categories and simplicial sets}\label{asfdhg}
In this section we recall some elements of a convenient language for describing and discussing homotopical  properties of small categories.  The key role here is played by the nerve construction~\cite{MR516607,nerveatnlab}  that transforms small categories into 
simplicial sets. We refer the reader to~\cite{MR0268888,MR1711612} for an overview of how to do homotopy theory on simplicial sets.
We consider the standard model structure on the category of simplicial sets
where weak equivalences are given by the maps inducing  bijections on all the homotopy groups with respect to any choice of a base point.

Here is a list of definitions and characterizations of various homotopical notions for small categories and some
statements regarding these notions.

\begin{point}
Let  $\mathcal C$ be a property of simplicial sets, such as being contractible, $n$-connected, having $p$-torsion integral reduced homology, or having trivial reduced homology in some degrees. 
By definition a small category $I$ satisfies $\mathcal C$ if and only if its  nerve $N(I)$  satisfies  $\mathcal C$.
\end{point}
\begin{point}
Let $\mathcal C$ be a property of maps of simplicial sets, such as being a weak equivalence, a homology isomorphism, or having $n$-connected homotopy fibers.  By definition, a functor $f\colon I\to J$ between small categories satisfies $\mathcal C$ if and only if the map of simplicial sets $N(f)\colon N(I)\to N(J)$ satisfies $\mathcal C$.
\end{point}
\begin{point}
Functors $f,g\colon I\to J$ are homotopic if the maps $N(f),N(g)\colon N(I)\to N(J)$ are homotopic.
For example, if there is a natural transformation $\phi\colon f\Rightarrow g$ between   $f$ and $g$, then $f$ and $g$ are 
homotopic.

Assume $I$ has a terminal object $t$. Then there is a unique natural transformation from the identity
functor $\text{id}\colon I\to I$ to the constant functor $t\colon I\to  I$ with value $t$. 
The identity functor is therefore homotopic to the constant functor, and consequently  $I$ is contractible.
By a similar argument, a category with an initial object is also contractible.
\end{point}

\begin{point}
A commutative square  of small categories  is called  a homotopy push-out (pull-back) if after applying the nerve construction the obtained commutative square of simplicial sets is a homotopy push-out (pull-back). 
\end{point}
\begin{point}\label{asdfdsfhgdf}
Recall that a collection
 $\mathcal C$ of simplicial sets  is  closed if it contains a nonempty simplicial set and  it is closed under weak  equivalencies and   homotopy colimits indexed by arbitrary small contractible categories~\cite[Corollary 7.7]{MR1397724}. Any closed collection contains all contractible simplicial sets~\cite[Proposition 4.5]{MR1397724}. If a closed collection contains an empty    simplicial set, then it contains all simplicial sets.
 
The following are some  examples of collections of simplicial sets that are closed:   contractible simplicial sets, $n$-connected  simplicial sets,  connected simplicial sets having $p$-torsion reduced integral homology, simplicial sets  having trivial reduced  homology with some fixed coefficients up to a given degree, and more generally simplicial sets which are acyclic with respect to some (possibly not ordinary) homology theory.
 
 Let $\mathcal C$ be a closed  collection of simplicial sets and  $f\colon I\to J$ be a functor between small categories. We say that homotopy fibers of $f$ satisfy
 $\mathcal C$ if the homotopy fibers of $N(f)\colon N(I)\to N(J)$, over any component in  $N(J)$,
 belong to $\mathcal C$.
 \end{point}

 \begin{point}
 
Let $f\colon I\to J$ be a functor between small categories. For an object $j $  in $J$,  the symbol $j\!\uparrow\! f$ denotes the category whose objects are  pairs $(i, \alpha\colon j\to f(i))$ consisting of  an object $i$  in $I$ and a morphism  $\alpha\colon j\to f(i)$ in $J$. The set of morphisms in $j\uparrow f$ between $(i, \alpha\colon j\to f(i))$ and
$(i', \alpha'\colon j\to f(i'))$ is by definition the set of morphisms $\beta\colon i\to i'$   in $I$  for which the following triangle commutes:
\[\begin{tikzcd}
& j\ar{dl}[swap]{\alpha}\ar{dr}{\alpha'}\\
f(i)\ar{rr}{f(\beta)} & & f(i')'
\end{tikzcd}\]
The composition in $j\!\uparrow\! f$ is given by the composition in $I$.

For an object $j $  in $J$,  the symbol $f\!\downarrow\! j$ denotes the category whose objects are  pairs $(i, \alpha\colon  f(i)\to j)$ consisting of an object $i$  in $I$ and a morphism  $\alpha\colon f(i)\to  j $ in $J$. The set of morphisms in $f\downarrow j$ between $(i, \alpha\colon  f(i)\to j)$  and
$(i', \alpha'\colon  f(i')\to j)$  is by definition the set of morphisms $\beta\colon i\to i'$   in $I$  for which the following triangle commutes:
\[\begin{tikzcd}
f(i)\ar{rr}{f(\beta)}\ar{dr}{\alpha} & & f(i')\ar{dl}[swap]{\alpha'} \\
& j
\end{tikzcd}\]
The composition in $f\!\downarrow\! j$ is given by the composition in $I$.
\end{point}

\begin{thm}[{\cite[Theorem 9.1]{MR1397724}}]\label{asgdfghfdn}
Let $\mathcal C$ be a closed  collection of simplicial sets and $f\colon I\to J$ be a functor between small categories. 
\begin{enumerate}
\item If, for every  $j$,  $f\!\downarrow\! j$ satisfies   $C$, then so do the homotopy fibers of $f$.
\item  If, for every $j$,   $j\!\uparrow\! f$ satisfies   $C$, then so do the homotopy fibers of $f$.
\end{enumerate}
\end{thm}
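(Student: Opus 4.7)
The plan is to reduce both assertions to a single general principle about the homotopy fibers of the canonical augmentation from a homotopy colimit. First I would observe that (2) follows from (1) by duality: nerves commute up to natural weak equivalence with passage to opposite categories, every property entering the definition of a closed collection is self-dual, and there is a canonical identification $f^{\text{op}}\!\downarrow\! j = (j\!\uparrow\! f)^{\text{op}}$. Applying (1) to $f^{\text{op}}\colon I^{\text{op}}\to J^{\text{op}}$ therefore yields (2). Hence it suffices to prove~(1).

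For (1), the key input is Thomason's homotopy colimit theorem (equivalently the Grothendieck construction), which provides a natural weak equivalence
\[
N(I)\ \simeq\ \operatorname*{hocolim}_{j\in J} N(f\!\downarrow\! j),
\]
under which $N(f)$ is identified with the canonical map $\operatorname*{hocolim}_{j\in J} N(f\!\downarrow\! j)\to \operatorname*{hocolim}_{j\in J}\ast = N(J)$ induced by the terminal natural transformation to the constant diagram with value a point. Thus~(1) is a special case of the following more general statement, which I would isolate and prove: for any functor $F\colon J\to \textup{sSet}$ with $F(j)\in\mathcal C$ for every $j$, the homotopy fibers of the canonical map $\operatorname*{hocolim}_J F\to N(J)$ lie in $\mathcal C$.

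To prove this general statement, I would use the Bousfield--Kan bar-construction model of $\operatorname*{hocolim}_J F$, which exhibits it as a cellular object built by attaching copies of $F(j_0)\times\Delta^n$ along the $n$-simplices $j_0\to\cdots\to j_n$ of $N(J)$. Filtering by the skeleta of $N(J)$ presents the augmentation as a sequence of pushouts in which the homotopy fiber, at each stage and over each cell, can be identified up to weak equivalence as a homotopy colimit of values of $F$ indexed by an over-category $J\!\downarrow\! j$. Such an over-category has the terminal object $(j,\text{id}_j)$ and is therefore contractible; invoking closure of $\mathcal C$ under weak equivalences and under homotopy colimits indexed by contractible small categories then inductively places each piece of the fiber in $\mathcal C$. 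Finally, because homotopy fibers over points in the same component of $N(J)$ are weakly equivalent, it is enough to treat vertices, which I have just done.

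The principal obstacle is the skeletal bookkeeping in the last step: verifying that, at each stage of the filtration, the homotopy fiber of the attaching map can genuinely be rewritten as a hocolim over a contractible slice with values weakly equivalent to those of $F$. This is where the axioms of a closed collection do real work, as opposed to being used formally; once the identification is established, the closure properties recalled in Paragraph~\ref{asdfdsfhgdf} deliver the conclusion with no further input.
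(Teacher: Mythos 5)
The paper does not prove this theorem; it is cited from Chach\'olski's closed-classes paper (Theorem 9.1 of MR1397724), so there is no in-text proof to compare against. Evaluated on its own terms, your strategy is correct at the level of architecture: the reduction of (2) to (1) by duality is fine, and invoking Thomason's theorem to identify $N(f)$ with the augmentation $\operatorname{hocolim}_{j\in J} N(f\!\downarrow\! j)\to N(J)$ is exactly the right move. The generalized lemma you isolate --- if $F\colon J\to \mathrm{sSet}$ takes values in $\mathcal C$, then the homotopy fibers of $\operatorname{hocolim}_J F\to N(J)$ lie in $\mathcal C$ --- is indeed the heart of the matter.

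The gap is in your proof of that lemma. You claim the homotopy fiber of the augmentation over a vertex $j$ can be identified with a homotopy colimit of $F$ indexed by the slice $J\!\downarrow\! j$, and you lean on that slice being contractible because it has a terminal object. This is not correct, and the error is not merely bookkeeping. Take $J$ to be the pushout poset $a\leftarrow b\to c$ and $F$ the diagram $\ast\leftarrow S^0\to\ast$. Then $\operatorname{hocolim}_J F\simeq S^1$, while $N(J)$ is contractible, so the homotopy fiber over any vertex is $S^1$. But $J\!\downarrow\! a$ is the walking arrow, $F$ restricted to it is $S^0\to\ast$, and $\operatorname{hocolim}_{J\downarrow a}F$ is a cone, hence contractible --- not $S^1$. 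What you are computing with slices is essentially the strict fiber or the part of the bar construction lying over a cell of $N(J)$, which the augmentation does not convert into the homotopy fiber because it is not a fibration. The correct replacement, which is what the reference actually supplies, is to pull the diagram back along the universal cover of $N(J)$ (equivalently a suitable path-category or the Grothendieck construction of the local system of contractible simplicial sets over $N(J)$); the resulting indexing category still has contractible nerve, but it is emphatically not the slice, and only with this replacement does the closure of $\mathcal C$ under homotopy colimits over contractible categories finish the argument. Your closing paragraph flags the skeletal bookkeeping as the main obstacle; it is in fact the place where the argument, as written, fails.
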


Depending on the choice of a closed collection, Theorem~\ref{asgdfghfdn}  leads to:
\begin{cor}\label{asgfgjh}
Let $f\colon I\to J$ be a functor between small categories. 
\begin{enumerate}
\item If,  for every $j$, $f\!\downarrow\! j$  (respectively  $j\!\uparrow\! f$) is contractible, then
$f$  is a weak equivalence.
\item  If,  for every $j$, $f\!\downarrow\! j$  (respectively  $j\!\uparrow\! f$) is $n$-connected for some $n\geq 0$, then
the homotopy fibers of $f$  are $n$-connected. Thus in this case  
$f$ induces an isomorphism on homotopy groups in degrees $0,\ldots,n$ and a surjection in degree $n+1$.
\item  If, for every $j$, $f\!\downarrow\! j$  (respectively  $j\!\uparrow\! f$) is connected and has $p$-torsion reduced integral homology in degrees  not exceeding  $n$  ($n\geq 0$), then
the homotopy fibers of $f$ are connected and have   $p$-torsion reduced integral homology in  degrees  not exceeding  $n$. Thus in this case,  for primes $q\not=p$,
$f$ induces an isomorphism on  $H_\ast(-,{\mathbf Z}/q)$ for  $\ast\leq n$ and a surjection on
$H_{n+1}(-,{\mathbf Z}/q)$.
\item If,  for every $j$, $f\!\downarrow\! j$  (respectively  $j\!\uparrow\! f$)  is acyclic with respect to some homology theory, then $f$ is this homology  isomorphism.
\end{enumerate}
\end{cor}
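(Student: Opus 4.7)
The plan is to apply Theorem~\ref{asgdfghfdn} four times, once for each of the closed collections singled out in Paragraph~\ref{asdfdsfhgdf}, and then translate the conclusion about homotopy fibers into the stated conclusion about $f$ itself via the long exact sequence of a fibration (for (1) and (2)), via the Serre spectral sequence or the relative Hurewicz / comparison theorem (for (3)), and directly from the definition of a homology isomorphism (for (4)).

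First I would handle (1). The collection of contractible simplicial sets is closed, so by Theorem~\ref{asgdfghfdn} the homotopy fibers of $f$ are contractible over every component of $N(J)$; a map whose every homotopy fiber is contractible is a weak equivalence. For (2), I would use that the collection of $n$-connected simplicial sets is closed, conclude that the homotopy fibers are $n$-connected, and then read off the conclusion on $\pi_\ast$ from the long exact sequence
\[
\cdots\to\pi_{k+1}(N(J))\to\pi_k(\text{fib})\to\pi_k(N(I))\to\pi_k(N(J))\to\pi_{k-1}(\text{fib})\to\cdots
\]
using that $\pi_k(\text{fib})=0$ for $k\le n$.

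For (3), the collection of connected simplicial sets whose reduced integral homology is $p$-torsion in degrees $\le n$ is closed (this is one of the listed examples), so the homotopy fibers are connected and have $p$-torsion reduced integral homology through degree $n$. Tensoring with ${\mathbf Z}/q$ (for $q\neq p$) kills all $p$-torsion and a universal coefficient argument then shows $\widetilde H_\ast(\text{fib},{\mathbf Z}/q)=0$ for $\ast\le n$. Since the fiber is connected, the Serre spectral sequence
\[
E_{s,t}^2 = H_s\bigl(N(J),H_t(\text{fib},{\mathbf Z}/q)\bigr)\Longrightarrow H_{s+t}(N(I),{\mathbf Z}/q)
\]
collapses in the relevant range onto the base row through degree $n$, with a single possibly non-zero contribution from $E^2_{s,n+1}$ obstructing surjectivity one degree higher; this yields the claimed isomorphism for $\ast\le n$ and surjection in degree $n+1$. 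Finally (4) is immediate: a homology theory is, by its very definition, a collection of homotopy functors to chain complexes whose acyclic objects form a closed collection, so the fibers being acyclic forces $f$ to be a homology isomorphism (again via the corresponding spectral sequence, or by the standard fact that a map with acyclic homotopy fibers is an $E$-equivalence for any such theory $E$).

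The genuinely non-routine step is the homological comparison in (3): one must be careful that being connected (and not merely $0$-connected in a pointed sense) is exactly what is needed to make the Serre spectral sequence unambiguous, and one must verify that the $p$-torsion condition is preserved under the passage from the statement about integral homology of fibers to the statement about ${\mathbf Z}/q$-homology of $f$. Both issues are standard but deserve explicit mention; once they are dispatched, the four cases follow in parallel from the single input Theorem~\ref{asgdfghfdn} and require no new machinery.
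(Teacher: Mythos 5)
Your proposal is correct and matches the paper's intent exactly: the corollary is stated as a direct application of Theorem~\ref{asgdfghfdn} to the four closed collections listed in Paragraph~\ref{asdfdsfhgdf}, which is precisely what you do. Your elaboration of how the fiber conditions translate into conclusions about $f$ (long exact sequence for (1)--(2), Serre spectral sequence with $\mathbf Z/q$ coefficients for (3), generalized-homology spectral sequence for (4)) is sound and simply makes explicit what the paper leaves to the reader.
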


\section{Simplicial  complexes  and  small categories}
\begin{point}\label{aSFDHFN}
Fix a set $\mathcal U$ called a  {\bf universe}. 
A   {\bf simplicial  complex}  is  a  collection $K$ of finite nonempty subsets  of $\mathcal U$ that  satisfies  the  following  requirement:   if    $\sigma\subset  \mathcal U$  is  in  $K$, then  every    non-empty  subset  of  $\sigma$  is  also    in  $K$.   

Let $X\subset \mathcal U$ be a subset. The collection $\{\{x\}\  |\  x\in  X\}$, consisting of singletons in $X$,  is a simplicial complex denoted  also by $X$, called the  {\bf discrete} simplicial complex on $X$. The collection  $\{\sigma \subset X\ |\ 1\leq |\sigma|<\infty\}$ of all finite nonempty subsets of $X$ is also a simplicial complex  denoted by $\Delta[X]$ and  called the {\bf simplex} on $X$. 
A simplicial complex is called  a standard simplex if it is of the form $\Delta[X]$ for some $X\subset \mathcal U$.
The simplex $\Delta[\emptyset]$ is called the  empty simplex or the empty simplicial complex.
\end{point}

\begin{point}
Let  $K$  be  a  simplicial  complex.
An  element  $\sigma$  in     $K$  is  called  a  simplex  of   $K$ of dimension
$|\sigma|-1$.    The  set  of  $n$-dimensional  simplices in $K$  is  denoted  by  $K_n$.  
An element $x\in \mathcal U$ is called a vertex of $K$ if $\{x\}$ is a simplex in $K$.
The assignment  $x\mapsto \{x\}$ is a bijection between the set of vertices in  $K$ and the set of its
$0$-dimensional simplices $K_0$. We use this bijection to identify these sets. Thus we are going to refer to 
$0$-dimensional simplices in $K$ also as vertices.
\end{point}

\begin{point}\label{asfdfhd}
If  $\{K^{i}\}_{i\in I}$ is a family of simplicial complexes, then
both the intersection $\cap_{i\in I} K^{i}$ and the union  $\cup_{i\in I} K^{i}$ are also simplicial complexes.
If $K$ is a simplicial complex and $X\subset {\mathcal U}$ is a subset, then the intersection
$K\cap \Delta[X]$ is a simplicial complex consisting of the  elements of $K$ that are subsets of $X$.
This intersection is called  the {\bf restriction} of $K$ to  $X$ and is  denoted by $K_X$.

Note that $\Delta[X]\cap \Delta[Y]=\Delta[X\cap Y]$. Thus the  intersection of  standard simplices (see~\ref{aSFDHFN}) is again a standard simplex, which  can possibly be empty.

Let $L$ and $K$ be simplicial complexes.  If $L\subset K$, then
 $L$ is  called a subcomplex of $K$. Being a subcomplex 
is a partial order relation on the collection of all simplicial complexes which  gives this collection the structure of a lattice.  The union is the join  and the intersection is the meet.

The collection $\cup_{0\leq i\leq n} K_i$ is a  subcomplex of $K$   called the   $n$-th {\bf skeleton} of $K$
and denoted by $\text{sk}_nK$.
\end{point}
\begin{point}
  A  {\bf map}  between  
two  simplicial  complexes    $K$  and  $L$  is  by  definition  a  function $\phi\colon K\to L$ for which
there exists  a function   $f\colon  K_0\to  L_0$  such  that  $\phi(\sigma)=\cup_{x\in \sigma }f(\{x\})$  for all $\sigma$ in $K$. 
In particular $\phi(\{x\})=f(\{x\})$  for every vertex  $x$ in $K$.
Thus $f$ is uniquely determined by $\phi$ and 
we often use the symbol $\phi_0$ to denote $f$.  If   $K$ and $L$ are fixed,    then $\phi$ is  
determined by $f=\phi_0$.  
 The inclusion $L\subset K$ of a subcomplex
  is an example of a  map.

 For  any  simplicial  complex  $K$, the inclusions
$K_0\subset  K\subset  \Delta[K_0]$,
  between  the  discrete  simplicial  complex  $K_0$,   $K$, and the simplex $\Delta[K_0]$ on $K_0$
  are maps of simplicial complexes. The induced  functions on the set of vertices  for  these  two  inclusions 
  are given by the identity function  $\text{id}\colon K_0\to K_0$.
    \end{point}
  
  \begin{point}\label{fdfgsfhb} Classically, the geometrical
realization is used to  define and  study homotopical properties of simplicial complexes. 
For example, a commutative square of  simplicial complexes is called a  homotopy push-out (pull-back) if  after applying the realization, the obtained commutative square of spaces is a homotopy push-out (pull-back).
For instance  two simplicial complexes $K$ and $L$  fit into the following commutative diagram of subcomplex 
inclusions:
\[\begin{tikzcd}
 K\cap L \arrow[r, hook] \arrow[d, hook'] &K\arrow[d, hook] \\
 L\arrow[r, hook] & K\cup L
  \end{tikzcd}\]
By applying the realization construction to this square, we obtain a commutative  square of spaces
which is a push-out and hence a homotopy push-out as the maps involved are cofibrations. 
\end{point}

Since the realization of a simplicial complex $K$ can be built from the realization of its $n$-skeleton  $\text{sk}_nK$ by attaching  (possibly in many steps) cells of dimension strictly  bigger than $n$, we get:
\begin{prop}\label{asfsdfhg}
Let $n\geq 0$ be a natural number. For every simplicial complex $L$ such that $\text{\rm sk}_{n+1}K\subset L\subset K$, 
 the homotopy fibers
of the inclusion $L\subset K$ are $n$-connected. In particular, the map $L\subset K$  induces an isomorphism on homotopy  and integral homology groups in    degrees $0,\ldots,n$ and a surjection in degree $n+1$. 
\end{prop}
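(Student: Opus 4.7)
The plan is to reduce the statement to a standard fact about relative CW complexes: if $(X,A)$ is a relative CW complex with cells only in dimensions $\geq m$, then $\pi_i(X,A)=0$ for $i<m$, equivalently the homotopy fiber of $A\hookrightarrow X$ is $(m-2)$-connected.

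First I would translate the combinatorial hypothesis into a CW hypothesis. Since $\text{sk}_{n+1}K\subset L\subset K$, every simplex $\sigma\in K\setminus L$ satisfies $|\sigma|-1\geq n+2$. Applying geometric realization, we obtain a filtration
\[
|L|=|L_{n+1}|\subset |L_{n+2}|\subset |L_{n+3}|\subset\cdots\subset |K|,
\]
where $|L_k|$ is the realization of $L$ together with all simplices of $K$ of dimension $\leq k$. Each successive inclusion is a pushout of maps $\bigsqcup S^{k-1}\hookrightarrow \bigsqcup D^k$ indexed by the simplices in $K_{k}\setminus L_{k}$, and these pushouts are cofibrations. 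Consequently $(|K|,|L|)$ has the structure of a relative CW complex whose cells all have dimension $\geq n+2$.

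Next I would invoke the standard cellular approximation consequence: for such a relative CW pair $(|K|,|L|)$, the inclusion $|L|\hookrightarrow |K|$ is $(n+1)$-connected, i.e.\ $\pi_i(|K|,|L|,x)=0$ for all $0\leq i\leq n+1$ and all basepoints $x\in |L|$. Via the long exact sequence of the pair, this is equivalent to the homotopy fiber of $|L|\hookrightarrow |K|$ over any basepoint being $n$-connected, which is exactly the claim.

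Finally, the consequences about homotopy and homology groups follow formally: from the long exact sequence of the homotopy fibration $\text{hofib}\to |L|\to |K|$, an $n$-connected fiber yields isomorphisms $\pi_i(|L|)\xrightarrow{\cong}\pi_i(|K|)$ for $i\leq n$ and a surjection in degree $n+1$; the same holds for integral homology by Hurewicz/Whitehead applied to the relative pair, or equivalently by observing that $H_i(|K|,|L|;\mathbb{Z})=0$ for $i\leq n+1$ since the relative cellular chain complex is concentrated in degrees $\geq n+2$. The main (and only) obstacle is the cellular connectivity lemma, but this is a textbook result about CW pairs and no originality is needed beyond citing it.
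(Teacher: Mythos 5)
Your proof is correct and follows essentially the same route the paper sketches: the paper's one-sentence justification (that $|K|$ is built from $|\text{sk}_{n+1}K|\subset |L|$ by attaching cells of dimension $\geq n+2$) is exactly the relative CW filtration you spell out, and the conclusion follows by the standard connectivity lemma for cell attachments, just as you invoke it.
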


There are situations however when another way of extracting homotopical properties of simplicial complexes is more convenient.  In the rest of this  section, we recall how one can retrieve and study such information  by first transforming     simplicial complexes into small categories and then    using 
 the nerve construction   as explained in Section~\ref{asfdhg}.

  \begin{point}
 Let   $K$  be a simplicial complex. The {\bf simplex category} of $K$, denoted also by the same symbol $K$, 
 is by definition the inclusion poset of its simplices. Thus, the objects of $K$ are  the simplices in   $K$ and 
 the sets of morphisms are either empty or contain only one element:
  \[|\text{mor}_{K}(\sigma,\tau)|=\begin{cases}
  1  &  \text{  if  }  \sigma\subset  \tau\\
  0  &  \text{  otherwize}
  \end{cases}\]
  
 If $\phi\colon K\to L$ is a map of simplicial complexes, then the assignment $\sigma\mapsto \phi(\sigma)$ is a functor of simplex categories. We denote this functor also by the symbol $\phi\colon K\to L$.
  Not all functors between $K$ and $L$ are of such a form.
    \end{point}

The geometrical realization of a simplicial complex is weakly equivalent to the realization of  the nerve of  this  simplicial complex. Thus  to describe homotopical properties of simplicial complexes we can either
use their   geometrical realizations or the nerves of their simplex categories.

\begin{point}
Let $K$ be a simplicial complex and $\sigma$ be its simplex. Define the {\bf star} of $\sigma$ to be $\text{St}(\sigma):=\{\mu\in K\ |\  \sigma\cup \mu\in K\}$.
Note that $\text{St}(\sigma)$ is a  subcomplex of $K$. The star of any simplex is  contractible. More generally:
\end{point}  

\begin{prop}\label{adgassgfhjg}
Let $\sigma$ be a simplex in  $K$. Then, for any  proper subset $S\subsetneq  \sigma$, the collection 
$L:=\{\mu\in K\ |\ \mu\cap S=\emptyset\text{ and } \sigma\cup \mu\in K\}$ is a contractible simplicial complex
(note that if $S=\emptyset$, then $L=\text{\rm St}(\sigma)$).
\end{prop}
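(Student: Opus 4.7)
The plan is to use the categorical/poset description of $L$ together with the ``natural transformations give homotopies'' principle recalled earlier in the excerpt. Before doing that, I would briefly check that $L$ is genuinely a simplicial complex: if $\mu\in L$ and $\emptyset\neq\mu'\subseteq\mu$, then $\mu'\cap S\subseteq\mu\cap S=\emptyset$ and $\sigma\cup\mu'\subseteq\sigma\cup\mu\in K$, so $\sigma\cup\mu'\in K$ since $K$ is closed under nonempty subsets; hence $\mu'\in L$.

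For contractibility, the key observation is that since $S\subsetneq\sigma$ properly, we can pick a vertex $v\in\sigma\setminus S$. Then $\{v\}\in L$ because $\{v\}\cap S=\emptyset$ and $\sigma\cup\{v\}=\sigma\in K$. I would now define a map on objects of the simplex category $L$ by $f(\mu):=\mu\cup\{v\}$. To see $f(\mu)\in L$, note that $(\mu\cup\{v\})\cap S=\emptyset$ (both $\mu\cap S$ and $\{v\}\cap S$ are empty), and that $\sigma\cup\mu\cup\{v\}=\sigma\cup\mu\in K$, which in particular contains $\mu\cup\{v\}$ as a subset, so $\mu\cup\{v\}\in K$. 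The assignment is order-preserving: $\mu\subseteq\mu'$ implies $\mu\cup\{v\}\subseteq\mu'\cup\{v\}$. Thus $f\colon L\to L$ is a functor.

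Now I would exhibit two natural transformations: first, the inclusion $\mu\subseteq\mu\cup\{v\}$ gives a natural transformation $\mathrm{id}_L\Rightarrow f$; second, the inclusion $\{v\}\subseteq\mu\cup\{v\}$ gives a natural transformation from the constant functor $c_{\{v\}}\colon L\to L$ with value $\{v\}$ to $f$. By the principle recalled in the third unnumbered paragraph of Section~\ref{asfdhg}, both $\mathrm{id}_L$ and $c_{\{v\}}$ are therefore homotopic to $f$, so $\mathrm{id}_L$ is homotopic to a constant functor. This means $N(L)$ is homotopy equivalent to a point, i.e.\ $L$ is contractible.

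There is no substantial obstacle here; the only place to be careful is verifying that the ``add $v$'' operation lands back in $L$, which is where the two defining conditions of $L$ (disjointness from $S$ and compatibility with $\sigma$) are exactly what is needed, and where the hypothesis $v\in\sigma\setminus S$ (supplied by properness $S\subsetneq\sigma$) plays its role. The special case $S=\emptyset$ then recovers contractibility of $\mathrm{St}(\sigma)$ as stated in the preceding paragraph.
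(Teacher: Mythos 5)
Your proof is correct and is essentially the same as the paper's: the paper exhibits the identical zigzag of natural transformations $\mathrm{id}_L \Rightarrow (\mu\mapsto\mu\cup(\sigma\setminus S)) \Leftarrow c_{\sigma\setminus S}$, whereas you use a single vertex $v\in\sigma\setminus S$ in place of the full complement $\sigma\setminus S$, an inessential variation.
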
 
\begin{proof} For  all $\mu$ in $L$, the inclusions $\mu\hookrightarrow \mu\cup (\sigma\setminus S)\hookleftarrow  \sigma\setminus S$ form natural transformations between:
\begin{itemize}
\item the identity functor $\text{id}\colon L\to L$, $\mu\mapsto \mu$,
\item the constant functor  $ L\to L$, $\mu\mapsto \sigma\setminus S$,
\item and $ L\to L$, given by  $\mu\mapsto  \mu\cup (\sigma\setminus S)$.
\end{itemize}
The identity functor  $\text{id}\colon L\to L$ is therefore  homotopic to the constant functor and consequently $
L$ is  contractible. 
\end{proof}

\begin{point}\label{adghxdfgjhkut}
Let $K$ be a simplicial complex. It's  simplex $\tau$ is called {\bf central} if $K=\text{St}(\tau)$, i.e.,
if for any simplex $\sigma$ in $K$, the set $\sigma\cup\tau$ is also a simplex in $K$.  For example, if $X\subset \mathcal{U}$ is non empty, then all simplices in $\Delta[X]$ (see~\ref{aSFDHFN}) are central.
If $\tau$ is a central simplex in $K$, then so is any subset $\tau'\subset \tau$.
According to Proposition~\ref{adgassgfhjg},  a simplicial complex that has a central simplex is contractible. 
\end{point}
\begin{point}\label{afsaasfagdf}
Let $K$ and $L$ be simplicial complexes. If $K\cap L=\emptyset$, then we define their   {\bf join} $K\ast L$ to be  the simplicial complex
consisting of all subsets of $\mathcal U$ of  the form $\sigma\cup \tau$  where $\sigma$ is in $K$ and $\tau$ is in $L$. 
The join is only defined for disjoint simplicial complexes.  The set of vertices $(K\ast L)_0$
is given by the (disjoint)   union  $K_0\cup L_0$. 
Note that $K\ast \Delta[\emptyset]=K$.
If $\sigma\in L$  is central in $L$, then it is central in $K\ast L$. Thus, for any non-empty  subset $X\subset  \mathcal U\setminus K_0$, the join
$K\ast \Delta[X]$ is contractible. Furthermore the join commutes with unions and intersections: if  $(K_1\cup K_2)\cap L=\emptyset$, then
$(K_1\cup K_2)\ast L=(K_1\ast L)\cup (K_2\ast L)$ and $(K_1\cap K_2)\ast L=(K_1\ast L)\cap (K_2\ast L)$. This can be used to show that,
for any choice of  a base-points in $K$ and $L$, the join $K\ast L$ has the homotopy type of the suspension of the smash $\Sigma(|K|\wedge |L])$.
In particular if $K$ is $n$-connected and $L$ is $m$-connected, then $K\ast L$ is $n+m+1$-connected.
\end{point}

\section{One outside point}\label{afsfdgdsfgh}
In this section we recall how the homotopy type of a simplical complex changes when   a vertex
is added.  We start with defining subcomplexes    that play  an important role in  describing such changes. These complexes are essentially used throughout   the entire paper.
\begin{defn}\label{asdfsaahdfg}
Let $K$ be a simplicial complex and $A\subset \mathcal{U}$ be a subset. 
For a simplex $\sigma$ in $K$, define the \textbf{obstruction complex}:
\[\text{St}(\sigma,A):=\{\mu\subset A\ |\ 0<|\mu| \text{ and }\mu\cup \sigma\in K\}=K_A\cap \text{St}(\sigma) \]
\end{defn}
If $\mu$ belongs to $\text{St}(\sigma,A)$, then so does any of its non empty finite subsets. Thus $\text{St}(\sigma,A)$ is a simplicial complex. It is a subcomplex of  $K_A$.
Note that the complex  $\text{St}(\sigma,A)$ may be  empty.   If $\tau\subset \sigma$, then    $\text{St}(\sigma,A)\subset \text{St}(\tau,A)$. 

Fix a vertex $v$ in $K$. Any simplex in $K$ either  contains $v$ or it does not.
This means    $K=K_{K_0\setminus\{v\}}\cup \text{St}(v)$ and hence we have a homotopy push-out square:


\[\begin{tikzcd}
 K_{K_0\setminus\{v\}}\cap  \text{St}(v) \ar[hook]{r} \ar[hook']{d} & \text{St}(v)\ar[hook]{d}\\
K_{K_0\setminus\{v\}}\ar[hook]{r}  & K
\end{tikzcd}\]
 By definition $ K_{K_0\setminus\{v\}}\cap  \text{St}(v)=\text{St}(v,K_0\setminus\{v\})$.   Proposition~\ref{adgassgfhjg} gives contractibility of  $\text{St}(v)$. The simplicial complex $K$ fits therefore  into the following homotopy cofiber sequence:
\[\text{St}(v,K_0\setminus\{v\})\hookrightarrow    K_{K_0\setminus\{v\}}\hookrightarrow  K\]
Here are some basic consequences of this fact:
\begin{cor} Let $v$ be a vertex in a simplical complex $K$.
\begin{enumerate}
\item If  $\text{\rm St}(v,K_0\setminus\{v\})$ is contractible, then
$K_{K_0\setminus\{v\}}\subset K$  is a weak equivalence.
\item  If  $\text{\rm St}(v,K_0\setminus\{v\})$ is $n$-connected for a natural number $n\geq 0$, then the map 
$K_{K_0\setminus\{v\}}\subset K$ induces an isomorphism on homotopy groups in degrees $0,\ldots,n$ and a surjection in degree $n+1$.
\item  If  $\text{\rm St}(v,K_0\setminus\{v\})$   is connected and has $p$-torsion reduced integral homology in degrees  not exceeding  $n$  ($n\geq 0$),  then    for a prime $q$ not dividing $p$,
$K_{K_0\setminus\{v\}}\subset K$  induces an isomorphism on  $H_\ast(-,{\mathbf Z}/q)$ for  $\ast\leq n$ and a surjection on
$H_{n+1}(-,{\mathbf Z}/q)$.
\item If  $\text{\rm St}(v,K_0\setminus\{v\})$   is acyclic with respect to some homology theory, then
$K_{K_0\setminus\{v\}}\subset K$ 
is this homology  isomorphism.
\end{enumerate}
\end{cor}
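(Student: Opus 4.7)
The plan is to deduce all four parts from the homotopy cofiber sequence
\[\text{St}(v,K_0\setminus\{v\})\hookrightarrow K_{K_0\setminus\{v\}}\hookrightarrow K\]
displayed just above the corollary, or equivalently from the homotopy push-out square whose top-right corner $\text{St}(v)$ is contractible (see Paragraph~\ref{adghxdfgjhkut}). In each part, I transfer a property of the top horizontal map, which up to homotopy is the unique map $\text{St}(v,K_0\setminus\{v\})\to\ast$, to the parallel bottom horizontal map $K_{K_0\setminus\{v\}}\hookrightarrow K$ using standard preservation properties of homotopy push-outs.

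For (1), contractibility of $\text{St}(v,K_0\setminus\{v\})$ makes the top map a weak equivalence between contractible spaces. Since weak equivalences are preserved by homotopy push-outs (the gluing lemma), so is $K_{K_0\setminus\{v\}}\hookrightarrow K$. For (2), the top map is an $(n+1)$-equivalence because its source is $n$-connected and its target is contractible. Invoking the classical fact that $n$-equivalences are preserved under homotopy push-outs yields that $K_{K_0\setminus\{v\}}\hookrightarrow K$ induces isomorphisms on $\pi_i$ for $i\le n$ and a surjection on $\pi_{n+1}$.

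For (3) and (4), I argue via the Mayer-Vietoris long exact sequence in the appropriate homology theory applied to the push-out decomposition $K=K_{K_0\setminus\{v\}}\cup \text{St}(v)$ with intersection $\text{St}(v,K_0\setminus\{v\})$. For (3), the universal coefficient theorem converts the $p$-torsion assumption on the integral homology of $\text{St}(v,K_0\setminus\{v\})$ in degrees $\le n$ into vanishing of $\tilde H_i(\text{St}(v,K_0\setminus\{v\});\mathbb{Z}/q)$ for $i\le n$ whenever $q\ne p$. Since $\text{St}(v)$ is contractible, its reduced $\mathbb Z/q$-homology vanishes identically, so the Mayer-Vietoris sequence forces $H_\ast(K_{K_0\setminus\{v\}};\mathbb Z/q)\to H_\ast(K;\mathbb Z/q)$ to be an isomorphism for $\ast\le n$ and a surjection for $\ast=n+1$. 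For (4), the same Mayer-Vietoris argument in the prescribed homology theory applies, using that $\text{St}(v)$ is contractible and hence acyclic for any reduced homology theory.

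The main anticipated obstacle is part (2): the preservation of $(n+1)$-equivalences under homotopy push-outs. This is a classical result but requires care in low dimensions, particularly the $n=0$ case which needs a Seifert--van Kampen style argument to control $\pi_1$ across the gluing. Parts (1), (3) and (4) are then straightforward long-exact-sequence calculations given the homotopy push-out structure.
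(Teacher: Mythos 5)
Your proof is correct and follows exactly the route the paper intends: the authors establish the homotopy push-out square with contractible corner $\text{St}(v)$ and the resulting homotopy cofiber sequence, then state the corollary as an immediate consequence without further proof. Your fleshing out — transferring the $(n+1)$-equivalence across the push-out for (1)--(2), and Mayer--Vietoris with the appropriate coefficients plus the universal coefficient theorem for (3)--(4) — is precisely the omitted argument.
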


\section{Two outside points}\label{asfasfgddf}
Let us fix two distinct vertices $v_0$ and $v_1$ in a simplicial complex $K$.
Note that $\left(K_0\setminus\{v_0\}\right)\cup \left(K_0\setminus\{v_1\}\right)=K_0$. In this section we are going to
investigate the inclusion $K_{K_0\setminus\{v_0\}}\cup K_{K_0\setminus\{v_1\}}\subset K$.

There are two possibilities. First, $\{v_0,v_1\}$
is not a simplex in $K$. In this case  $K_{K_0\setminus\{v_0\}}\cup K_{K_0\setminus\{v_1\}}=K$.

Assume  
$\{v_0,v_1\}$ is a simplex in $K$.
 Then: 
 \[K=K_{K_0\setminus\{v_0\}}\cup K_{K_0\setminus\{v_1\}}\cup \text{St}(v_0,v_1)\]
 Consequently  there is a homotopy push-out square (see~\ref{fdfgsfhb}):
\[\begin{tikzcd}
\left(K_{K_0\setminus\{v_0\}}\cup K_{K_0\setminus\{v_1\}}\right)\cap \text{St}(v_0,v_1) \arrow[d, hook'] \arrow[r, hook] &\text{St}(v_0,v_1)\arrow[d, hook]\\
K_{K_0\setminus\{v_0\}}\cup K_{K_0\setminus\{v_1\}}\arrow[r, hook] & K
\end{tikzcd}\]
Since the star complex $\text{St}(v_0,v_1)$ is contractible (see Proposition~\ref{adgassgfhjg}),
  $K$ is  therefore weakly equivalent to the  homotopy cofiber of the map:
\[\left(K_{K_0\setminus\{v_0\}}\cup K_{K_0\setminus\{v_1\}}\right)\cap \text{St}(v_0,v_1)\hookrightarrow K_{K_0\setminus\{v_0\}}\cup K_{K_0\setminus\{v_1\}}\]
The complex $\left(K_{K_0\setminus\{v_0\}}\cup K_{K_0\setminus\{v_1\}}\right)\cap \text{St}(v_0,v_1)$ fits into the following homotopy push-out square:
\[\begin{tikzcd}
K_{K_0\setminus\{v_0\}}\cap K_{K_0\setminus\{v_1\}}\cap \text{St}(v_0,v_1)\arrow[r, hook] \arrow[d, hook']& K_{K_0\setminus\{v_0\}}\cap  \text{St}(v_0,v_1) \arrow[d, hook]\\
K_{K_0\setminus\{v_1\}}\cap  \text{St}(v_0,v_1) \arrow[r, hook] & \left(K_{K_0\setminus\{v_0\}}\cup K_{K_0\setminus\{v_1\}}\right)\cap \text{St}(v_0,v_1)
\end{tikzcd}\]
Let us identify the complexes in this square:
\begin{itemize}
\item $K_{K_0\setminus\{v_0\}}\cap  \text{St}(v_0,v_1)=\{\mu\in K\ |\ \mu\cap \{v_0\}=\emptyset\text{ and } \{v_0,v_1\}\cup \mu\in K\}$ and thus according to
Proposition~\ref{adgassgfhjg} this complex is contractible;
\item by the same argument $K_{K_0\setminus\{v_1\}}\cap  \text{St}(v_0,v_1)$ is also contractible;
\item $K_{K_0\setminus\{v_0\}}\cap K_{K_0\setminus\{v_1\}}\cap \text{St}(v_0,v_1)=K_{K_0\setminus\{v_0,v_1\}}\cap \text{St}\left(v_0,v_1\right)=\\=
 \text{St}\left(\{v_0,v_1\},K_0\setminus\{v_0,v_1\}\right)$
\end{itemize}
It follows that  $ \left(K_{K_0\setminus\{v_0\}}\cup K_{K_0\setminus\{v_1\}}\right)\cap \text{St}(v_0,v_1)$
has the homotopy type of the suspension of the obstruction complex $\text{St}:=\text{St}\left(\{v_0,v_1\},K_0\setminus\{v_0,v_1\}\right)$
and hence we have a  homotopy
cofiber sequence of the form:
\[\Sigma \text{\rm St}\to K_{K_0\setminus\{v_0\}}\cup K_{K_0\setminus\{v_1\}}\hookrightarrow K\]


\section{$n+1$ outside points}
Homotopy cofiber sequences  described in Sections~\ref{afsfdgdsfgh} and~\ref{asfasfgddf} are particular cases of a more general statement
regarding an arbitrary number 
of outside points.  The aim of this section is to present this generalization.

Let us fix  a set   $\sigma=\{v_0,v_1,\ldots,v_n\}\subset K_0$ of $n+1$ distinct vertices in a simplcial complex $K$
which may not necessarily be a simplex in $K$.
Note that  $\bigcup_{v\in \sigma } \left(K_0\setminus\{v\}\right)=K_0$. In this section we are going to investigate  the inclusion
$\left(\bigcup_{v\in\sigma} K_{K_0\setminus\{v\}}\right) \subset K$

There are two possibilities.
 First, $\sigma$ is not a simplex in $K$. In this case
$\bigcup_{v\in \sigma} K_{K_0\setminus\{v\}}= K$.
 
 Assume $\sigma$ is a simplex in $K$. Then: 
 \[K=\left(\bigcup_{v\in \sigma} K_{K_0\setminus\{v\}}\right) \cup \text{St}(\sigma)\]
 Consequently  there is a homotopy push-out square (see~\ref{fdfgsfhb}):
\[\begin{tikzcd}
\left(\bigcup_{v\in \sigma} K_{K_0\setminus\{v\}}\right)\cap \text{St}( \sigma) \arrow[d, hook'] \arrow[r, hook] &\text{St}( \sigma)\arrow[d, hook]\\
\bigcup_{v\in  \sigma} K_{K_0\setminus\{v\}}\arrow[r, hook] & K
\end{tikzcd}\]
Since the star complex $\text{St}( \sigma)$ is contractible (see Proposition~\ref{adgassgfhjg}),
  $K$ is  therefore weakly equivalent to the  homotopy cofiber of the map:
\[\left(\bigcup_{v\in  \sigma} K_{K_0\setminus\{v\}}\right)\cap \text{St}( \sigma)\hookrightarrow \bigcup_{v\in  \sigma} K_{K_0\setminus\{v\}}\]
Next we identify the homotopy type of  $\left(\bigcup_{v\in  \sigma} K_{K_0\setminus\{v\}}\right)\cap \text{St}( \sigma)$:

\begin{prop}\label{asfdgdsfjdhhg}
Let  $\sigma$ be  a simplex of dimension $n$ in a simplicial complex $K$. Then  $\left(\bigcup_{v\in  \sigma} K_{K_0\setminus\{v\}}\right)\cap \text{\rm St}( \sigma)$ 
has the homotopy type of the $n$-th suspension of the obstruction complex $\Sigma^n \text{\rm St}( \sigma, K_0\setminus\sigma)$.
\end{prop}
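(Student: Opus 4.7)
The plan is to exhibit $L := \bigl(\bigcup_{v\in\sigma} K_{K_0\setminus\{v\}}\bigr) \cap \text{St}(\sigma)$ as a union of $n+1$ subcomplexes whose higher intersections are controlled by Proposition~\ref{adgassgfhjg}, and then peel off one subcomplex at a time so that each step contributes one suspension. Writing $\sigma=\{v_0,\ldots,v_n\}$, set $A_i:=K_{K_0\setminus\{v_i\}}\cap\text{St}(\sigma)$ for $i=0,\ldots,n$, so that $L=A_0\cup\cdots\cup A_n$. A direct check shows that for every non-empty $I\subset\{0,\ldots,n\}$,
\[
\bigcap_{i\in I}A_i=\{\mu\in K \mid \mu\cap\{v_i: i\in I\}=\emptyset\ \text{and}\ \sigma\cup\mu\in K\}.
\]
When $I=\{0,\ldots,n\}$ this intersection is exactly $\text{St}(\sigma,K_0\setminus\sigma)$; for every $I\subsetneq\{0,\ldots,n\}$ the set $\{v_i:i\in I\}$ is a proper subset of $\sigma$, so Proposition~\ref{adgassgfhjg} makes $\bigcap_{i\in I}A_i$ contractible.

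I would then prove by induction on $n\geq 0$ the following abstract statement: for any family of subcomplexes $A_0,\ldots,A_n$ of a simplicial complex such that $\bigcap_{i\in I}A_i$ is contractible for every $\emptyset\neq I\subsetneq\{0,\ldots,n\}$, one has $A_0\cup\cdots\cup A_n\simeq\Sigma^n(A_0\cap\cdots\cap A_n)$. The base case $n=0$ is tautological. For the inductive step, set $B:=A_0\cup\cdots\cup A_{n-1}$. The subfamily $\{A_0,\ldots,A_{n-1}\}$ satisfies the hypothesis, and its full intersection $A_0\cap\cdots\cap A_{n-1}$ is itself contractible (as a proper intersection of the larger family); the inductive hypothesis, together with the fact that $\Sigma$ preserves weak equivalences, then forces $B\simeq\Sigma^{n-1}(\text{pt})\simeq\text{pt}$. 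Next, with $B_j:=A_j\cap A_n$ for $j<n$, one has $\bigcap_{j\in J}B_j=A_{J\cup\{n\}}$, which is contractible when $J\subsetneq\{0,\ldots,n-1\}$ and equals $A_0\cap\cdots\cap A_n$ when $J=\{0,\ldots,n-1\}$; the inductive hypothesis applied to the $B_j$'s yields $B\cap A_n=B_0\cup\cdots\cup B_{n-1}\simeq\Sigma^{n-1}(A_0\cap\cdots\cap A_n)$. Finally, the square $B\hookleftarrow B\cap A_n\hookrightarrow A_n$ is a push-out of subcomplex inclusions, hence a homotopy push-out by~\ref{fdfgsfhb}; since $B$ and $A_n$ are both contractible, the push-out $L=B\cup A_n$ is homotopy equivalent to $\Sigma(B\cap A_n)\simeq\Sigma^n(A_0\cap\cdots\cap A_n)=\Sigma^n\text{St}(\sigma,K_0\setminus\sigma)$, completing the induction.

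The main obstacle is orchestrating the induction so that a single hypothesis simultaneously delivers the contractibility of the intermediate union $B$ and the identification $B\cap A_n\simeq\Sigma^{n-1}\text{St}(\sigma,K_0\setminus\sigma)$. Once the statement is formulated in the abstract form above for an arbitrary family of subcomplexes with all proper intersections contractible, both facts are produced by the same recursion, and each homotopy push-out collapses to a suspension because the two contractible pieces are glued along a subspace whose suspension is exactly what is needed.
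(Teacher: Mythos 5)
Your proof is correct, and it takes a genuinely different route from the paper's. The paper organizes the decomposition as a contravariant $(n+1)$-cube $\tau\mapsto K_{K_0\setminus\tau}\cap\text{St}(\sigma)$ indexed by subsets of $\sigma$, observes that it is strongly cocartesian, notes that all intermediate vertices of the cube are contractible by Proposition~\ref{adgassgfhjg}, and then invokes the general fact that a strongly cocartesian cube with contractible intermediate entries identifies its terminal vertex with the $n$-fold (unreduced) suspension of its initial one. You instead unroll that cube lemma into an explicit two-stage induction: the same proper-intersection contractibility (again from Proposition~\ref{adgassgfhjg}) is what makes both $B=A_0\cup\cdots\cup A_{n-1}$ and $A_n$ contractible, and recursing on the shifted family $B_j=A_j\cap A_n$ identifies $B\cap A_n$ with $\Sigma^{n-1}$ of the full intersection, so the final push-out, being a gluing of two contractible pieces, gives one more suspension. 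What your approach buys is self-containedness: the only inputs are Proposition~\ref{adgassgfhjg} and the elementary fact (Paragraph~\ref{fdfgsfhb}) that a push-out of subcomplex inclusions is a homotopy push-out, whereas the paper leans on a cubical-homotopy result whose citation is left blank. Your argument also transparently covers the case $\text{St}(\sigma,K_0\setminus\sigma)=\emptyset$ (where the iterated suspension yields $S^{n-1}$), matching Corollary~\ref{asfgdsgfjghkkl}(2b). The price is that the abstract inductive statement has to be formulated carefully enough to deliver both the contractibility of $B$ and the identification of $B\cap A_n$ from a single hypothesis, which you handle correctly by phrasing it for an arbitrary family with contractible proper intersections.
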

\begin{proof}
Consider the inclusion poset of all  subsets $\tau\subset \sigma$. For any such subset $\tau\subset \sigma$,
define:
\[F(\tau):=\begin{cases}
\bigcap_{v\in \tau} K_{K_0\setminus\{v\}}= K_{K_0\setminus\tau} & \text{ if } \tau\not=\emptyset\\
\bigcup_{v\in  \sigma} K_{K_0\setminus\{v\}} & \text{ if } \tau=\emptyset
\end{cases}\]
Note that if $\tau'\subset\tau\subset\sigma$, then
$F(\tau)\subset F(\tau')$. Thus by assigning to the inclusion $\tau'\subset\tau$ the map
$F(\tau)\subset F(\tau')$, we obtain a contra-variant functor indexed by the inclusion poset of all subsets of $\sigma$. For example in the case
$\sigma=\{v_0,v_1,v_2\}$, this contra-variant functor describes a commutative cube:
\[\begin{tikzcd}[column sep=0.1em,row sep=2em]
K_{K_0\setminus\{v_0,v_1,v_2\}}\ar[hook]{rr}\ar[hook]{dd}\ar[hook]{dr} & & K_{K_0\setminus\{v_1,v_2\}}\ar[hook]{dr}\ar[hook]{dd}
\\
&K_{K_0\setminus\{v_0,v_2\}}\ar[hook, crossing over]{rr} & & K_{K_0\setminus\{v_2\}}\ar[hook]{dd}
\\
K_{K_0\setminus\{v_0,v_1\}}\ar[hook]{rr}\ar[hook]{dr} & & K_{K_0\setminus\{v_1\}}\ar[hook]{dr}
\\
& K_{K_0\setminus\{v_0\}}\ar[hook]{rr}\ar[hook, crossing over,from=uu] & & K_{K_0\setminus\{v_0\}}\cup K_{K_0\setminus\{v_1\}}\cup K_{K_0\setminus\{v_2\}} 
\end{tikzcd}\]

For arbitrary $n$, the functor $F$ describes a commutative cube of dimension $n+1$.   This cube is both co-cartesian and  strongly cartesian
 (\cite{}). It is therefore also a  homotopy co-cartesian. By intersecting with $\text{St}(\sigma)$, we obtain a new cube
$\tau\mapsto F(\tau)\cap \text{St}(\sigma)$.  The properties of being co-cartesian and  strongly cartesian are preserved by taking such interection.
Consequently $\left(\bigcup_{v\in  \sigma} K_{K_0\setminus\{v\}}\right)\cap \text{\rm St}( \sigma)$ has the homotopy type of
$\text{hocolim}_{\emptyset\not=\tau\subset\sigma}\left(K_{K_0\setminus\tau}\cap \text{\rm St}( \sigma)\right)$.

For any proper subset $\emptyset\not=\tau\subsetneq \sigma$, we have an equality:
\[K_{K_0\setminus\tau}\cap \text{\rm St}( \sigma)=\{\mu\ |\ \mu\cap \tau=\emptyset \text{ and } \sigma\cup\mu\in K\}\]
We can then use Proposition~\ref{adgassgfhjg} to conclude that $K_{K_0\setminus\tau}\cap \text{\rm St}( \sigma)$ is contractible
if $\emptyset\not=\tau\subsetneq \sigma$. Thus all the spaces in the cube $\tau\mapsto F(\tau)\cap \text{St}(\sigma)$, except for the initial and the terminal, are contractible. That implies that the terminal space $F(\emptyset)\cap \text{St}(\sigma)=\left(\bigcup_{v\in \sigma} K_{K_0\setminus\{v\}}\right)\cap \text{St}(\sigma)$  is homotopy equivalent to the $n$-th suspension of the initial space: $\Sigma^n  \left(F(\sigma)\cap \text{St}(\sigma)
\right)=
\Sigma^n \left( K_{K_0\setminus \sigma}\cap \text{St}(\sigma)\right)=\Sigma^n\text{St}(\sigma,K_0\setminus \sigma)$.
\end{proof}

We finish this section with summarising the  consequences of  the discussion leading to Proposition~\ref{asfdgdsfjdhhg} and  the proposition itself:
\begin{cor}\label{asfgdsgfjghkkl}
 Let $\sigma\subset K_0$ be a subset consisting of  $n+1$ distinct vertices in a simplicial complex $K$. 
\begin{enumerate}
\item If  $\sigma $ is not a simplex in $K$, then  $\bigcup_{v\in\sigma}K_{K_0\setminus\{v\}}=K$.
\item Assume $\sigma$  is a simplex in $K$.
\begin{enumerate}
\item Then there is a  homotopy
cofiber sequence:
\[\Sigma^n\text{\rm St}(\sigma,K_0\setminus \sigma)\to \bigcup_{v\in\sigma}K_{K_0\setminus\{v\}}\hookrightarrow K\]
\item  If $\text{\rm St}(\sigma,K_0\setminus \sigma)=\emptyset$, then   there is a  homotopy
cofiber sequence (here $S^{-1}=\emptyset$):
\[S^{n-1}\to  \bigcup_{v\in\sigma}K_{K_0\setminus\{v\}}\hookrightarrow K\]
\item If $\text{\rm St}(\sigma,K_0\setminus \sigma)\not =\emptyset$, then  the homotopy fibers of   $\bigcup_{v\in\sigma}K_{K_0\setminus\{v\}}\hookrightarrow K$ are $ m\geq 0$ connected if and only if  $\overline{H}_i(\text{\rm St}(\sigma,K_0\setminus \sigma),{\mathbf Z})=0$
for $i\leq m-n$.
\item  If $\text{\rm St}(\sigma,K_0\setminus \sigma)\not =\emptyset$, then $\bigcup_{v\in\sigma}K_{K_0\setminus\{v\}}\hookrightarrow K$ is a weak
equivalence if and only if  $\overline{H}_i\left(\text{\rm St}(\sigma,K_0\setminus \sigma),{\mathbf Z}\right)=0$ for all $i$.
\end{enumerate}
\end{enumerate}
\end{cor}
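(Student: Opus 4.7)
The plan is to harvest all four statements from the homotopy push-out analysis performed immediately before Proposition~\ref{asfdgdsfjdhhg} together with that proposition itself.

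Part (1) is elementary: if $\sigma$ is not a simplex of $K$, then for every simplex $\mu\in K$ there is some $v\in\sigma$ with $v\notin\mu$, so $\mu\subset K_0\setminus\{v\}$, i.e.\ $\mu\in K_{K_0\setminus\{v\}}$. Hence the union equals $K$. For part (2)(a), the decomposition $K=\left(\bigcup_{v\in\sigma}K_{K_0\setminus\{v\}}\right)\cup \text{St}(\sigma)$ together with contractibility of $\text{St}(\sigma)$ (Proposition~\ref{adgassgfhjg}) identifies $K$ with the homotopy cofiber of the inclusion $\left(\bigcup_{v\in\sigma}K_{K_0\setminus\{v\}}\right)\cap \text{St}(\sigma)\hookrightarrow \bigcup_{v\in\sigma}K_{K_0\setminus\{v\}}$, and Proposition~\ref{asfdgdsfjdhhg} identifies the intersection with $\Sigma^{n}\text{St}(\sigma,K_{0}\setminus\sigma)$. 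Part (2)(b) is then the degenerate case: with the convention $\Sigma\emptyset=S^{0}$, iterating gives $\Sigma^{n}\emptyset=S^{n-1}$, so substituting an empty obstruction complex into (a) yields the claimed sequence.

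For (2)(c) and (2)(d), I set $A=\Sigma^{n}\text{St}(\sigma,K_{0}\setminus\sigma)$ and $B=\bigcup_{v\in\sigma}K_{K_0\setminus\{v\}}$. Since $\text{St}(\sigma,K_{0}\setminus\sigma)$ is non-empty, $A$ is a non-empty $n$-fold suspension and hence $(n-1)$-connected. From the cofiber sequence in (a) we have $K/B\simeq \Sigma A$, and the suspension isomorphism gives
\[
H_{i}(K,B)\;\cong\;\overline{H}_{i-1}(A)\;\cong\;\overline{H}_{i-1-n}\bigl(\text{St}(\sigma,K_{0}\setminus\sigma)\bigr).
\]
The homotopy fiber of $B\hookrightarrow K$ over any chosen base-point is $m$-connected exactly when $\pi_{i}(K,B)=0$ for $i\leq m+1$. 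The plan is to apply the relative Hurewicz theorem to the pair $(K,B)$: inductively, assuming $(K,B)$ is already $(k-1)$-connected, one gets $\pi_{k}(K,B)\cong H_{k}(K,B)$, and then vanishing of $H_{k}$ upgrades the connectivity by one. Iterating until $k=m+1$ shows that $(K,B)$ is $(m+1)$-connected iff $H_{i}(K,B)=0$ for $i\leq m+1$, which translates to $\overline{H}_{j}(\text{St}(\sigma,K_{0}\setminus\sigma))=0$ for $j\leq m-n$. Finally, (2)(d) follows from (2)(c) by letting $m\to\infty$ and invoking Whitehead's theorem, since geometric realisations of simplicial complexes are CW.

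The main obstacle is rigorously running the Hurewicz induction in (2)(c), because the relative Hurewicz theorem requires the pair to be simply connected before one can identify $\pi_{\ast}$ with $H_{\ast}$ in the relevant range. When $n\geq 2$ this is automatic since $A$ is then simply connected and a straightforward van Kampen argument shows $(K,B)$ is $1$-connected. For the edge cases $n=0$ and $n=1$, I would argue directly from the long exact homotopy sequence of the cofibration pair and from the fact that $B$ contains every vertex of $K$ once $|\sigma|\geq 2$, to establish $\pi_{0}(K,B)=0$ and $\pi_{1}(K,B)=0$ by hand when the corresponding reduced homology of $\text{St}(\sigma,K_{0}\setminus\sigma)$ vanishes.
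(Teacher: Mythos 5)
Parts (1), (2a), and (2b) of your proposal are correct and follow exactly the route the paper intends: (1) is set-theoretic, (2a) combines the pushout decomposition preceding Proposition~\ref{asfdgdsfjdhhg} with that proposition, and (2b) is the degenerate case.

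For (2c) and (2d) the relative-Hurewicz plan has a genuine gap, which you flag but cannot close as proposed. Relative Hurewicz for the pair $(K,B)$ requires not merely that the pair be $1$-connected (this one can arrange) but that $\pi_1(B)$ act trivially on $\pi_*(K,B)$, and nothing in the hypotheses forces that. For $n=0$ this is fatal, because the statements are in fact false there: take a finite acyclic simplicial complex $W$ with $\pi_1(W)\neq 1$ (for instance a triangulation of the $2$-dimensional spine of the Poincar\'e homology sphere), pick a vertex $v\notin W_0$, and set $K=W*\Delta[\{v\}]$ and $\sigma=\{v\}$. Then $B=K_{K_0\setminus\{v\}}=W$ and $\text{\rm St}(\sigma,K_0\setminus\sigma)=W$, which is non-empty with $\overline{H}_i(W;\mathbf{Z})=0$ for all $i$; yet $K$ is a cone, hence contractible, while $B\simeq W$ is not, so $B\hookrightarrow K$ is not a weak equivalence and its homotopy fibre (namely $W$) is $0$-connected but not $1$-connected. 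This contradicts (2d) and contradicts (2c) at $m=1$. So the ``argue directly for $n=0,1$'' fallback cannot work at $n=0$. For $n\geq 1$ the key observation you need but do not use is that $\Sigma^{n}\text{\rm St}(\sigma,K_0\setminus\sigma)$ is simply connected (any suspension of a non-empty complex is); one can then pass to universal covers, where $\widetilde K/\widetilde B$ is a wedge of copies of $\Sigma^{n+1}\text{\rm St}(\sigma,K_0\setminus\sigma)$ indexed by $\pi_1(B)$, and apply Hurewicz to the simply connected pair $(\widetilde K,\widetilde B)$ (equivalently, invoke Blakers--Massey excision). That yields (2c) and (2d) in all degrees for $n\geq 1$, without the unjustified triviality-of-action assumption on $(K,B)$ itself.
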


\section{Push-out decompositions I.}\label{dec}
In this section our starting assumption is:

\begin{point}[\bf Starting input I]\label{adfdfhgf}
{\em $K$ is a  simplicial complex, $X\cup Y=K_0$ is a cover of its set of vertices, and  $A:=X\cap Y$.}
\end{point}

By restricting $K$ to $X$ and $Y$, and taking the union of these restrictions we obtain a subcomplex $K_X\cup K_Y\subset K$. 
Since $K_X\cap K_Y=K_A$, this subcomplex fits into the following homotopy push-out square:
\[\begin{tikzcd}
K_A\ar[hook]{r} \ar[hook']{d} & K_X \ar[hook]{d}\\
K_Y\ar[hook]{r} & K_X\cup K_Y
\end{tikzcd}\]
This push-out can be then used to extract  various  homotopical properties of the union $K_X\cup K_Y$  
from the  properties of $K_X$, $K_Y$ and $K_A$. For example, 
 if $K_X$, $K_Y$ and $K_A$ belong to a closed collection
(see~\ref{asdfdsfhgdf}), then so does $K_X\cup K_Y$.
If $K_A$ is contractible, then $K_X\cup K_Y$
has the homotopy type of the wedge of  $K_X$ and $K_Y$, and its  reduced homology  is the sum of the reduced homologies of $K_X$ and $K_Y$.
 More generally, there is a Mayer-Vietoris  sequence
connecting homologies of $K_X\cup K_Y$ with those of $K_X$, $K_Y$ and $K_A$.

A fundamental question discussed in this article  is:  {\em  under what circumstances the inclusion 
$K_X\cup K_Y\subset K$ is a weak equivalence,
or homology isomorphism, or has highly connected homotopy fibers etc?} Such  circumstances would  enable us to express various  homotopical properties of $K$ in terms of the   properties of its restrictions
$K_X$, $K_Y$ and $K_A$.

\begin{defn}\label{asdgfssdtjhgkj}
Under the starting assumption~\ref{adfdfhgf}, define $P$ to be the  subposet of  $K$
given by:
\[P:=\{\sigma\in K\ |\ \sigma\subset X\text{ or } \sigma\subset Y \text{ or }  \sigma\cap A\not=\emptyset\}\]
\end{defn}

We are going to be  more interested in the set of simplices of $K$ that do not belong to $P$, which explicitly can be described as:
\[K\setminus P=\{\sigma\in K\ |\ \sigma\cap X\not=\emptyset\text{ and } \sigma\cap Y\not=\emptyset
\text{ and } \sigma\cap A=\emptyset\}\]

The poset  $P$  may not be  the simplex category of any simplicial complex.
There are  two poset inclusions   that we denote by $f$ and $g$:

\[\begin{tikzcd}
 K_X\cup K_Y \arrow[r, hook, "f"] & P \arrow[r, hook,"g"] & K
 \end{tikzcd}\]

Our  first general observation is:
\begin{prop}\label{asfsdfg}
The functor $f\colon K_X\cup K_Y\hookrightarrow  P$ is a weak equivalence.
\end{prop}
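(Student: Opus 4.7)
The plan is to apply Quillen's Theorem A in the form of Corollary~\ref{asgfgjh}.1: it suffices to show that for every object $\sigma$ of $P$ the over-category $f\!\downarrow\!\sigma$ is contractible. Since both $K_X\cup K_Y$ and $P$ are posets, unwinding the definitions gives
\[
f\!\downarrow\!\sigma \;=\; \{\tau\in K_X\cup K_Y \ |\ \tau\subset\sigma\}
\;=\;\{\tau\in K \ |\ \emptyset\neq\tau\subset\sigma\cap X\} \cup \{\tau\in K\ |\ \emptyset\neq\tau\subset\sigma\cap Y\},
\]
where in the last equality I use that any $\tau\subset\sigma$ is automatically a simplex of $K$. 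So $f\!\downarrow\!\sigma$ is the simplex poset of $\Delta[\sigma\cap X]\cup\Delta[\sigma\cap Y]$.

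I would split into two cases according to the defining alternatives for membership in $P$. In the first case, $\sigma\subset X$ or $\sigma\subset Y$, so $\sigma$ itself lies in $K_X\cup K_Y$ and is the terminal object of $f\!\downarrow\!\sigma$; contractibility then follows from the general fact recalled in paragraph~3 of Section~\ref{asfdhg}. The interesting case is when $\sigma\in P\setminus(K_X\cup K_Y)$, i.e.\ when $\sigma\not\subset X$, $\sigma\not\subset Y$, and $\sigma\cap A\neq\emptyset$.

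In this case I would pick any vertex $v\in\sigma\cap A$ and consider the self-functor $F\colon f\!\downarrow\!\sigma \to f\!\downarrow\!\sigma$ sending $\tau$ to $\tau\cup\{v\}$. This is well-defined: since $v\in\sigma$ we still have $\tau\cup\{v\}\subset\sigma$, hence it is a simplex of $K$; and since $v\in A=X\cap Y$, if $\tau\subset X$ then $\tau\cup\{v\}\subset X$, and similarly for $Y$, so $\tau\cup\{v\}\in K_X\cup K_Y$. The inclusions
\[
\tau \;\subset\; \tau\cup\{v\} \;\supset\; \{v\}
\]
then yield natural transformations from the identity functor to $F$ and from the constant functor with value $\{v\}$ to $F$. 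By the discussion in paragraph~3 of Section~\ref{asfdhg}, the identity of $f\!\downarrow\!\sigma$ is therefore homotopic to a constant functor, so $f\!\downarrow\!\sigma$ is contractible.

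The one place that requires a small verification is the well-definedness of $F$ on the relevant subposet; the cellular/cofibration arguments and any invocation of closed collections are not needed here, since a direct natural transformation argument already produces contractibility. With contractibility of $f\!\downarrow\!\sigma$ established for every $\sigma\in P$, Corollary~\ref{asgfgjh}.1 yields that $f$ is a weak equivalence.
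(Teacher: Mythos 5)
Your proposal is correct and follows essentially the same route as the paper: reduce to contractibility of the over-categories $f\!\downarrow\!\sigma$ via Theorem~A (Corollary~\ref{asgfgjh}.1), handle the case $\sigma\subset X$ or $\sigma\subset Y$ by exhibiting a terminal object, and handle the remaining case by a zigzag of natural transformations. The only cosmetic difference is that you contract onto a single vertex $\{v\}$ with $v\in\sigma\cap A$, whereas the paper contracts onto the full simplex $\sigma\cap A$; both choices work for the same reason.
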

\begin{proof}
We are going to show that, for every $\sigma$ in $P$, $f\!\downarrow\! \sigma$ is  contractible.

First assume
 $\sigma\subset X$ or $\sigma\subset Y$. Then the object $(\sigma, \text{id}\colon\sigma\to \sigma)$ is terminal in $f\!\downarrow\! \sigma$ and consequently 
this category  is contractible. 

Assume    $\sigma\cap A\not=\emptyset$.  Then, for any 
object $(\tau,\tau\subset \sigma)$ in $f\!\downarrow\! \sigma$, the  subsets $\tau$, $\tau\cup (\sigma\cap A)$, and $ \sigma\cap A$ of $\sigma$ are simplices that belong to $K_X\cup K_Y$. We can then form the following commutative diagram in $P$  
  where the top horizontal arrows represent  morphisms in    $K_X\cup K_Y$:
  \[\begin{tikzcd}
 \tau \arrow[r, hook]  \arrow[rd, hook] &\tau\cup (\sigma\cap A) \arrow[d, hook] & \sigma\cap A  \arrow[l, hook'] \arrow[dl, hook']  \\
& \sigma
  \end{tikzcd}\]
These horizontal morphisms form  natural transformations between:
\begin{itemize}
\item 
the identity functor
$\text{id}\colon f\!\downarrow\! \sigma\to f\!\downarrow\! \sigma$, $(\tau,\tau\subset \sigma)\mapsto(\tau,\tau\subset \sigma)$,
\item the constant functor $ f\!\downarrow\! \sigma\to f\!\downarrow\! \sigma$, $(\tau,\tau\subset \sigma)\mapsto ( \sigma\cap A, \sigma\cap A\subset \sigma)$,
\item and 
$ f\!\downarrow\! \sigma\to f\!\downarrow\! \sigma$ given by  $(\tau,\tau\subset \sigma)\mapsto (\tau\cup( \sigma\cap A), \tau\cup( \sigma\cap A)\subset \sigma)$.
\end{itemize}
The identity functor $\text{id}\colon f\!\downarrow\! \sigma\to f\!\downarrow\! \sigma$ 
is therefore homotopic to the constant functor. This can happen only if $f\!\downarrow\! \sigma$ is a contractible category.
\end{proof}

According to Proposition~\ref{asfsdfg}, the homotopy fibers of $g\colon P\subset K$ and the inclusion $K_X\cup K_Y\subset K$ 
   are weakly equivalent. To understand these homotopy fibers, we are going to focus on the categories  $\sigma\!\uparrow \! g$ and then utilise Corollary~\ref{asgfgjh}.  The functor 
$\sigma\mapsto \sigma\!\uparrow \! g$  fits into the following diagram of natural transformations between functors 
 indexed by $K^{\text{op}}$ with small categories as values:
\[\begin{tikzcd}
& \sigma\ar[mapsto, bend right]{dl}\ar[mapsto]{d}\ar[mapsto, bend left]{dr}\\
\text{St}(\sigma,A)\ar{r}{\psi_{\sigma}}& \sigma\!\uparrow \! g 
\ar{r}{\phi_{\sigma}} & \text{St}(\sigma)
\end{tikzcd}
\]
where:
\begin{itemize}
\item 
 $\psi_{\sigma}\colon \text{St}(\sigma,A)\to \sigma\!\uparrow \! g$ assigns to 
$\mu$ in $\text{St}(\sigma,A)$ the object  in $ \sigma\!\uparrow \! g$ given by the pair  $\psi_{\sigma}(\mu):=(\mu\cup \sigma,  \sigma\subset \mu\cup \sigma)$. 
\item $\phi_{\sigma}\colon \sigma\!\uparrow \! g\to \text{St}(\sigma)$ assigns to
$(\tau,\sigma\subset \tau)$  the simplex  $\tau$ in $ \text{St}(\sigma)$.
\end{itemize}

These natural transformations satisfy  the following properties:
\begin{prop}\label{adfgsdsgfjdh}
Let  $\sigma$ be a simplex in $K$.
\begin{enumerate}
\item If $\sigma$ is in $ P$, then $\sigma\!\uparrow \! g$
is contractible and $\phi_{\sigma}\colon \sigma\!\uparrow \! g\to \text{\rm St}(\sigma)$  is a weak equivalence.
\item If $\sigma$ is in $K\setminus P$,
then $\psi_{\sigma}\colon \text{\rm St}(\sigma,A)\to \sigma\!\uparrow \! g$  is a weak equivalence.
\end{enumerate}
\end{prop}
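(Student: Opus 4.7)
The plan is to analyse the slice category $\sigma\!\uparrow\! g$ directly using its poset structure and then, in case (2), to construct an explicit homotopy inverse to $\psi_\sigma$. Since $P$ is a sub-poset of the poset $K$, an object of $\sigma\!\uparrow\! g$ is simply a simplex $\tau\in P$ with $\sigma\subset\tau$, and a morphism is precisely an inclusion $\tau\subset\tau'$ between two such simplices.

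For part (1), assume $\sigma\in P$. Then $(\sigma,\text{id}_\sigma)$ is itself an object of $\sigma\!\uparrow\! g$, and from the description of morphisms it is visibly initial: for any $(\tau,\sigma\subset\tau)$ the inclusion $\sigma\subset\tau$ in $P$ supplies the (unique) required morphism. Hence $\sigma\!\uparrow\! g$ is contractible by the discussion in Paragraph~\ref{asdfdsfhgdf} regarding categories with an initial object. By Proposition~\ref{adgassgfhjg} applied with $S=\emptyset$, the star $\text{St}(\sigma)$ is also contractible, so the functor $\phi_\sigma$ between two contractible categories is automatically a weak equivalence.

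For part (2), the assumption $\sigma\in K\setminus P$ means that $\sigma\cap A=\emptyset$ while $\sigma$ has vertices both in $X\setminus A$ and in $Y\setminus A$. The key observation is that any $\tau\in P$ containing $\sigma$ inherits these vertices, so $\tau\not\subset X$ and $\tau\not\subset Y$; the only remaining way for $\tau$ to lie in $P$ is $\tau\cap A\neq\emptyset$. Using this, define $r\colon \sigma\!\uparrow\! g\to\text{St}(\sigma,A)$ by $r(\tau,\sigma\subset\tau):=\tau\cap A$. This is well-defined ($\tau\cap A$ is a non-empty subset of $A$ with $(\tau\cap A)\cup\sigma\subset\tau\in K$, hence $(\tau\cap A)\cup\sigma\in K$) and manifestly functorial in $\tau$.

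It remains to check that $\psi_\sigma$ and $r$ are mutually inverse up to homotopy. On the nose one has $r\circ\psi_\sigma=\text{id}_{\text{St}(\sigma,A)}$, because for $\mu\subset A$ with $\sigma\cap A=\emptyset$ the equality $(\mu\cup\sigma)\cap A=\mu$ holds. In the other direction, for an object $(\tau,\sigma\subset\tau)$ the inclusion $(\tau\cap A)\cup\sigma\subset\tau$ defines a morphism in $\sigma\!\uparrow\! g$ from $\psi_\sigma\circ r(\tau,\sigma\subset\tau)$ to $(\tau,\sigma\subset\tau)$, natural in the object; this is a natural transformation $\psi_\sigma\circ r\Rightarrow\text{id}_{\sigma\!\uparrow\! g}$, so by Paragraph~\ref{asdfdsfhgdf} the composite is homotopic to the identity. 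Hence $\psi_\sigma$ is a weak equivalence. The only delicate point in the whole argument is the observation that $\tau\cap A\neq\emptyset$ for every object of $\sigma\!\uparrow\! g$ when $\sigma\in K\setminus P$, which is exactly what makes the retraction $\tau\mapsto\tau\cap A$ land in $\text{St}(\sigma,A)$.
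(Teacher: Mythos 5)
Your proof is correct and follows essentially the same route as the paper: in part (1) the initial object $(\sigma,\text{id})$ gives contractibility and $\phi_\sigma$ is a weak equivalence because both ends are contractible, and in part (2) the retraction $\tau\mapsto\tau\cap A$ (the paper calls it $\alpha_\sigma$) is a strict left inverse to $\psi_\sigma$ and a homotopy right inverse via the natural transformation coming from $(\tau\cap A)\cup\sigma\subset\tau$. The one slightly delicate point — that $\tau\cap A\neq\emptyset$ for every object of $\sigma\!\uparrow\!g$ once $\sigma\notin P$ — you isolate and justify exactly as the paper does.
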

\begin{proof}
If $\sigma$ is in $P$, then $(\sigma,\text{id}\colon \sigma\to \sigma)$ is an initial object in $\sigma\!\uparrow \! g$ and hence
this category is contractible.  That proves (1).

Assume $\sigma$ is not in $P$, which  is equivalent to $\sigma\cap Y\not=\emptyset$ and $ \sigma\cap X\not=\emptyset$ and $\sigma\cap A=\emptyset$.
Let $(\tau,\sigma\subset \tau)$ be an object  in  $\sigma\!\uparrow \! g$. Define ${\alpha_{\sigma}}(\tau,\sigma\subset \tau):=\tau\cap A$.
Since  $\sigma\cap Y\not=\emptyset$ and $ \sigma\cap X\not=\emptyset$, then $\tau\cap Y\not=\emptyset$ and $ \tau\cap X\not=\emptyset$.
This together with the fact that  $\tau$ belongs to $P$ implies $\alpha_{\sigma}(\tau,\sigma\subset \tau)=\tau\cap A\not=\emptyset$.
Furthermore $(\tau\cap A)\cup \sigma\subset \tau\in P\subset K$. Thus $\alpha_{\sigma}$ defines a functor $\alpha_{\sigma}\colon \sigma\!\uparrow \! g\to \text{St}(\sigma,A)$.
 Note:
 \[\alpha_{\sigma}\psi_{\sigma}(\mu)=\alpha_{\sigma}(\mu\cup \sigma,  \sigma\subset \mu\cup \sigma)=(\mu\cup \sigma)\cap A\]
 Since $\sigma\cap A=\emptyset$ and $\mu\subset A$, we get $\alpha_{\sigma}\psi_{\sigma}(\mu)=(\mu\cup \sigma)\cap A=\mu$. The composition 
 $\alpha_{\sigma}\psi_{\sigma}$ is therefore the identity functor.
 
 Note further:
 \[\psi_{\sigma}\alpha_{\sigma}(\tau,\sigma\subset \tau)=\psi_{\sigma}(\tau\cap A)=\left((\tau\cap A)\cup\sigma,\sigma\subset(\tau\cap A)\cup\sigma\right) \]
 Since $\sigma\subset \tau$, we have a commutative diagram:
 \[\begin{tikzcd}
 &  \sigma\arrow[dl, hook']\arrow[dr, hook]\\
 (\tau\cap A)\cup\sigma \arrow[rr, hook] & & \tau
 \end{tikzcd}\]

 The bottom horizontal morphisms  form a natural transformation between:
 \begin{itemize}
 \item the composition  $ \psi_{\sigma}\alpha_{\sigma}\colon \sigma\!\uparrow \! g\to \sigma\!\uparrow \! g$ and
 \item the identity functor $\text{id}\colon  \sigma\!\uparrow \! g\to \sigma\!\uparrow \! g$.
 \end{itemize}
 The functor  $ \psi_{\sigma}\colon \text{St}(\sigma,A)\to \sigma\!\uparrow \! g$ has therefore a homotopy inverse and hence 
 is  a weak equivalence which proves (2).
 \end{proof}

We use Corollary~\ref{asgfgjh} and Proposition~\ref{adfgsdsgfjdh} to obtain our main statement describing properties of the homotopy fibers of the inclusion $K_X\cup K_Y\subset K$:

\begin{thm}\label{adfsdfhgd}
Notation as in~\ref{adfdfhgf} and Definition~\ref{asdgfssdtjhgkj}.
Let ${\mathcal C}$ be a closed collection of simplicial sets (see~\ref{asdfdsfhgdf}).
Assume that,  for every  $\sigma$ in $K\setminus P$,
the obstruction complex   $\text{\rm St}(\sigma,A)$  (see~\ref{asdfsaahdfg}) satisfies ${\mathcal C}$. Then the homotopy fibers of the inclusion $K_X\cup K_Y\subset K$ also satisfy 
${\mathcal C}$.
\end{thm}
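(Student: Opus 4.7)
The plan is to decompose the inclusion $K_X\cup K_Y\subset K$ as the composite $g\circ f$ of the two poset inclusions and then transfer the cellularity hypothesis on the obstruction complexes through the comma categories $\sigma\!\uparrow\! g$, using the machinery of Theorem~\ref{asgdfghfdn}.

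First I would observe that, by Proposition~\ref{asfsdfg}, the functor $f\colon K_X\cup K_Y\hookrightarrow P$ is a weak equivalence. Since weak equivalences preserve homotopy fibers of composites (the long exact sequence of the fibration sequence shows that the homotopy fibers of $gf$ over any point are weakly equivalent to the homotopy fibers of $g$ over that point), it suffices to prove that the homotopy fibers of $g\colon P\hookrightarrow K$ belong to $\mathcal C$. Because $\mathcal C$ is closed under weak equivalences, this reduction is harmless.

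Next I would apply Theorem~\ref{asgdfghfdn}(2) to $g$: it is enough to show that $\sigma\!\uparrow\! g$ satisfies $\mathcal C$ for every simplex $\sigma$ in $K$. Here Proposition~\ref{adfgsdsgfjdh} does the bulk of the work, and I would split into two cases. If $\sigma\in P$, then part (1) of that proposition gives that $\sigma\!\uparrow\! g$ is contractible, and by the remark in~\ref{asdfdsfhgdf} every closed collection contains the contractible simplicial sets, so $\sigma\!\uparrow\! g\in\mathcal C$. If instead $\sigma\in K\setminus P$, then part (2) produces a weak equivalence $\psi_\sigma\colon \text{St}(\sigma,A)\to \sigma\!\uparrow\! g$; the hypothesis gives $\text{St}(\sigma,A)\in\mathcal C$, and since $\mathcal C$ is closed under weak equivalences we again conclude $\sigma\!\uparrow\! g\in\mathcal C$.

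Putting these together, Theorem~\ref{asgdfghfdn}(2) yields that the homotopy fibers of $g$ satisfy $\mathcal C$, and combining this with the initial reduction via $f$ completes the argument. The whole proof is essentially an assembly of previously established facts, so there is no serious obstacle; the only point that requires a little care is the passage from the homotopy fibers of the composite $gf$ to those of $g$, which relies on $f$ being a weak equivalence together with the definition of what it means for the homotopy fibers of a functor between small categories to satisfy $\mathcal C$ (namely, componentwise in $N(K)$).
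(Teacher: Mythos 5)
Your proposal is correct and follows essentially the same route the paper intends: factor the inclusion as $g\circ f$, use Proposition~\ref{asfsdfg} to reduce to $g$, apply Theorem~\ref{asgdfghfdn}(2), and invoke Proposition~\ref{adfgsdsgfjdh} to identify $\sigma\!\uparrow\! g$ as either contractible (when $\sigma\in P$) or weakly equivalent to $\text{St}(\sigma,A)$ (when $\sigma\in K\setminus P$). This matches the paper's own (unwritten) argument exactly.
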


The following are some particular cases of the above theorem specialized to different closed collections of  simplicial sets.
\begin{cor}\label{sdfdfghsd} 
Notation as in~\ref{adfdfhgf} and~\ref{asdgfssdtjhgkj}.  Let $n$ be a natural number.
\begin{enumerate}
\item \label{afgds} If,  for every $\sigma$ in $ K\setminus P$ (see~\ref{asdgfssdtjhgkj}), the simplicial complex $\text{\rm St}(\sigma,A)$ (see~\ref{asdfsaahdfg}) is contractible, then
$K_X\cup K_Y\subset K$  is a weak equivalence.
\item  If,  for every $\sigma$ in $ K\setminus P$, the simplicial complex $\text{\rm St}(\sigma,A)$ is $n$-connected, then
the homotopy fibers of $K_X\cup K_Y\subset K$  are $n$-connected and this map induces  
an isomorphism on homotopy groups in degrees $0,\ldots,n$ and a surjection in degree $n+1$.
\item  Let $p$ be a prime  number. If,  for every $\sigma$ in $ K\setminus P$, the simplicial complex $\text{\rm St}(\sigma,A)$  is connected and has $p$-torsion reduced integral homology in degrees  not exceeding  $n$, then
the homotopy fibers of  $K_X\cup K_Y\subset K$  are connected and have   $p$-torsion reduced integral homology in  degrees  not exceeding  $n$. Thus in this case,  for  prime $q\not=p$,
$K_X\cup K_Y\subset K$  induces an isomorphism on  $H_\ast(-,{\mathbf Z}/q)$ for  $\ast\leq n$ and a surjection on
$H_{n+1}(-,{\mathbf Z}/q)$.
\item If,  for every $\sigma$ in $ K\setminus P$, the simplicial complex $\text{\rm St}(\sigma,A)$   is acyclic with respect to some homology theory, then
$K_X\cup K_Y\subset K$ 
is this homology  isomorphism.
\end{enumerate}
\end{cor}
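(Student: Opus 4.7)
The plan is to obtain each of the four statements by applying Theorem~\ref{adfsdfhgd} to a specific closed collection already identified in Paragraph~\ref{asdfdsfhgdf}, and then translating the conclusion that ``the homotopy fibers of $K_X\cup K_Y\subset K$ belong to $\mathcal{C}$'' into the stated map-level consequence using the standard long exact sequence of a (homotopy) fibration. Since Theorem~\ref{adfsdfhgd} does the heavy lifting, what remains is purely the routine translation and a check that the relevant collection is indeed closed.

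For part (1) I would take $\mathcal{C}$ to be the collection of contractible simplicial sets (which is closed, see~\ref{asdfdsfhgdf}). The hypothesis then gives that every homotopy fiber of $K_X\cup K_Y\subset K$ is contractible, which by the long exact sequence in homotopy on each component means the inclusion is a weak equivalence. For part (2), let $\mathcal{C}$ be the collection of $n$-connected simplicial sets. Theorem~\ref{adfsdfhgd} gives that the homotopy fibers are $n$-connected; the long exact sequence of the fibration
\[
F\longrightarrow K_X\cup K_Y\longrightarrow K
\]
over each component of $K$ then forces the map to be an isomorphism on $\pi_i$ for $i\le n$ and surjective on $\pi_{n+1}$, which also gives the corresponding statement on integral homology via Hurewicz/Whitehead in the simply connected case, or directly from the comparison of homotopy groups in the general connected case.

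For part (3), I would apply Theorem~\ref{adfsdfhgd} to the closed collection of connected simplicial sets whose reduced integral homology is $p$-torsion in degrees $\le n$ (again listed as closed in~\ref{asdfdsfhgdf}). Then every homotopy fiber of $K_X\cup K_Y\subset K$ has this property. For a prime $q\ne p$, tensoring with $\mathbf{Z}/q$ kills the $p$-torsion, so the fibers have $\widetilde H_i(-,\mathbf{Z}/q)=0$ for $i\le n$; the Serre spectral sequence (or the mod-$q$ Hurewicz/comparison arguments) then yields that the map is an isomorphism on $H_\ast(-,\mathbf{Z}/q)$ in degrees $\le n$ and a surjection in degree $n+1$. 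Part (4) is the cleanest: acyclic simplicial sets for any generalized homology theory form a closed collection, and by definition ``homotopy fibers acyclic with respect to a homology theory'' is exactly the condition that the map is an isomorphism on that theory.

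The main obstacle, such as it is, is bookkeeping rather than mathematics: one must be careful that each listed collection really is closed in the sense of~\ref{asdfdsfhgdf}, so that Theorem~\ref{adfsdfhgd} applies. This is precisely the content of the examples listed in~\ref{asdfdsfhgdf}, so the corollary reduces to a direct quotation of the theorem followed by the standard fibration exact sequence; no new geometric input is required.
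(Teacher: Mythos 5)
Your proposal is correct and matches the paper's intent exactly: the paper introduces this corollary with the single sentence that it consists of ``particular cases of the above theorem specialized to different closed collections,'' and your proof spells out precisely that specialization, choosing the same closed collections listed in~\ref{asdfdsfhgdf} and using the standard fibration long exact sequence (and Serre spectral sequence for part (3)) to translate ``homotopy fibers in $\mathcal C$'' into the stated map-level conclusions, just as the paper does for the analogous Corollary~\ref{asgfgjh}.
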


We remark that  Corollary~\ref{sdfdfghsd}.\ref{afgds}  is a generalization of ~\cite[Theorem 2]{NewHenry} to  abstract simplicial complexes.

Requirements for obtaining  $n$-connected fibers can be weakened:
\begin{prop}\label{sfsfdgfdhfg}
 Notation as in~\ref{adfdfhgf}  and~\ref{asdgfssdtjhgkj}. Let $n$ be a natural number.
 If  $\text{\rm St}(\sigma, A)$ is $n$-connected  for every $\sigma$ in $(\text{\rm sk}_{n+1}K)\setminus P$, then the homotopy fibers of $K_X\cup K_Y\subset K$ are $n$-connected.
\end{prop}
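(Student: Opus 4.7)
The plan is to factor the inclusion $K_X\cup K_Y\hookrightarrow K$ through an intermediate subcomplex $L$ satisfying $\text{sk}_{n+1}K\subset L\subset K$. Proposition~\ref{asfsdfhg} then immediately gives $n$-connected homotopy fibers for $L\hookrightarrow K$. The bulk of the work is to arrange $L$ so that Theorem~\ref{adfsdfhgd} can be applied to the inclusion $K_X\cup K_Y=L_X\cup L_Y\hookrightarrow L$ with the closed collection of $n$-connected simplicial sets. Since $n$-connected homotopy fibers are preserved under composition (via the fiber sequence of a composition of maps), combining these two statements completes the argument.

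I would define $L$ to be the smallest subcomplex of $K$ that contains $K_X$, $K_Y$, each simplex $\sigma\in(\text{sk}_{n+1}K)\setminus P$, and each simplex $\sigma\cup\mu$ with $\sigma\in(\text{sk}_{n+1}K)\setminus P$ and $\mu\in\text{St}(\sigma,A)$. Three claims would need to be verified. First, $\text{sk}_{n+1}K\subset L$: given a simplex $\tau$ of dimension at most $n+1$, either $\tau$ lies in $X$ or in $Y$ (so $\tau\in K_X\cup K_Y\subset L$), or it meets both $X\setminus A$ and $Y\setminus A$, in which case writing $\sigma=\tau\setminus A$ and $\mu=\tau\cap A$ exhibits $\tau$ as the generator $\sigma$ (when $\mu=\emptyset$) or as the generator $\sigma\cup\mu$ (otherwise). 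Second, writing $P_L$ for the analogue of $P$ built inside $L$, one should have $L\setminus P_L=(\text{sk}_{n+1}K)\setminus P$: any $\tau\in L\setminus P_L$ satisfies $\tau\cap A=\emptyset$, $\tau\cap X\neq\emptyset$ and $\tau\cap Y\neq\emptyset$, so it cannot lie in $K_X\cup K_Y$, and any subset of a generator $\sigma\cup\mu$ (with $\mu\subset A$) that avoids $A$ is automatically contained in $\sigma\in(\text{sk}_{n+1}K)\setminus P$. Third, the obstruction complex $\text{St}(\sigma,A)$ computed inside $L$ coincides with the one computed inside $K$ for every $\sigma\in L\setminus P_L$, directly from the construction.

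The main obstacle, and the reason the naive choice $L=\text{sk}_{n+1}K\cup K_X\cup K_Y$ fails, is that with that naive $L$ the obstruction complexes $\text{St}(\sigma,A)$ computed in $L$ become truncated versions of the ones computed in $K$: for an edge $\sigma\in(\text{sk}_{n+1}K)\setminus P$ only simplices $\mu$ with $|\mu|\leq n$ would survive, and such a partial skeleton of an $n$-connected complex need not itself be $n$-connected. The enlargement described above restores the full obstruction complexes while avoiding the introduction of any new element of $L\setminus P_L$: any subset of $\sigma\cup\mu$ that is disjoint from $A$ is forced to lie inside $\sigma$, so no new simplices outside $(\text{sk}_{n+1}K)\setminus P$ can appear. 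Once the three claims above are in place, the hypothesis on the obstruction complexes feeds directly into Theorem~\ref{adfsdfhgd} applied to $L$, yielding $n$-connected fibers for $K_X\cup K_Y\hookrightarrow L$ and thus for $K_X\cup K_Y\hookrightarrow K$.
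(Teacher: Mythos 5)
Your proof is correct, and it takes a genuinely different route from the paper's. The paper factors $K_X\cup K_Y\hookrightarrow K$ through the poset $P\cup\text{sk}_{n+1}K$ and re-runs the comma-category analysis of Proposition~\ref{adfgsdsgfjdh} locally: for $\sigma\in P$ the category $\sigma\!\uparrow\! g_1$ is contractible, and for $\sigma\in\text{sk}_{n+1}K\setminus P$ it is weakly equivalent to $\text{St}(\sigma,A)$, which is $n$-connected by hypothesis, so Theorem~\ref{asgdfghfdn} applies. You instead build an honest simplicial complex $L$ with $\text{sk}_{n+1}K\subset L\subset K$, $L_X=K_X$, $L_Y=K_Y$, $L\setminus P_L=(\text{sk}_{n+1}K)\setminus P$, and unchanged obstruction complexes, so that Theorem~\ref{adfsdfhgd} can be invoked as a black box for the inclusion $K_X\cup K_Y\subset L$, and Proposition~\ref{asfsdfhg} handles $L\subset K$. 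Your diagnosis of why the naive choice $L=\text{sk}_{n+1}K\cup K_X\cup K_Y$ fails, and the fix (adjoining all $\sigma\cup\mu$ without introducing new elements of $L\setminus P_L$, since any subset of $\sigma\cup\mu$ avoiding $A$ lies in $\sigma$), are both correct. What your route buys: Proposition~\ref{asfsdfhg} is applied only to a genuine simplicial complex, exactly as it is stated, whereas the paper applies it to the poset $P\cup\text{sk}_{n+1}K$, which is not a simplicial complex (if $\sigma\in P$ because $\sigma\cap A\neq\emptyset$, a subset of $\sigma$ need not meet $A$), so the paper silently relies on a poset extension of Proposition~\ref{asfsdfhg}. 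That extension is true, but your construction of $L$ avoids needing it. What the paper's route buys: it is shorter and requires no auxiliary construction, since the comma-category machinery is already set up.
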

\begin{proof}
Consider the following  poset  inclusions:
\[\begin{tikzcd}
K_X\cup K_Y\ar[hook]{r}{f}  &  P \ar[hook]{r}{g_1}\ar[hook,bend right=25]{rr}{g}  &  P\cup \text{sk}_{n+1}K \ar[hook]{r}{g_2} & K
\end{tikzcd}\]
According to Proposition~\ref{asfsdfg},
$f$ is a weak equivalence.  The homotopy fibers of $g_2$ are $n$-connected
 by Proposition~\ref{asfsdfhg}. Thus if the homotopy fibers of $g_1$ are $n$-connected, then so are 
 the 
  homotopy fibers of the inclusion $K_X\cup K_Y\subset K$. To show that 
 the homotopy fibers of $g_1$ are $n$-connected it is enough to show that the categories 
 $\sigma\!\uparrow \! g_1$ are $n$-connected for every  $\sigma$ in $P\cup \text{sk}_{n+1}K$.
 Proposition~\ref{adfgsdsgfjdh} gives that
 $\sigma\!\uparrow \! g_1$ is  contractible  if $\sigma$  is in $P$,  and   is weakly equivalent to $\text{\rm St}(\sigma,A)$  if  $\sigma$ is in $\text{sk}_{n+1}K\setminus P$.  By the assumption $\text{\rm St}(\sigma,A)$ are therefore   $n$-connected.
 \end{proof}

\section{Push-out decompositions II.}\label{asfdsdfhiu}
Theorem~\ref{adfsdfhgd}  states that the homotopy fibers of the inclusion
$K_X\cup K_Y \subset K$  belong to the smallest closed collection containing  all the  complexes $\text{St}(\sigma,A)$
for  $\sigma$ in $K\setminus P$.   
Recall that if a closed collection contains an empty simplicial set, then it contains all simplicial  sets, in which case  
Theorem~\ref{adfsdfhgd} has no content.  Thus   $\text{St}(\sigma,A)$   being non empty, for all
 $\sigma$  in $K\setminus P$, is an absolute  minimum requirement for Theorem~\ref{adfsdfhgd} to have any content.  
In  most of  our  statements that follow,  the  assumptions we make have  much stronger global non emptiness consequences of the form:
\[\bigcap_{\sigma\in K\setminus P}\text{St}(\sigma,A)\not= \emptyset\ \ \ \ \ \ \ \bigcap_{\sigma\in K_{n+1}\setminus P}\text{St}(\sigma,A)\not= \emptyset
\ \ \ \ \ \ \ \bigcap_{\sigma\in (\text{sk}_{n+1}K)\setminus P}\text{St}(\sigma,A)\not= \emptyset\]
Here is a consequence of  having one of these intersections non-empty:
\begin{prop}\label{adgsfgjkk}
Notation as in~\ref{adfdfhgf} and~\ref{asdgfssdtjhgkj}. Assume: 
\[\bigcap_{\sigma\in K_{1}\setminus P}\text{\rm St}(\sigma,A)\not= \emptyset\]
Then the homotopy fibers of $K_X\cup K_Y\subset K$  are connected. 
\end{prop}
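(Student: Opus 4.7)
The plan is to verify directly that the inclusion $j\colon K_X\cup K_Y\hookrightarrow K$ is a $\pi_0$-isomorphism and a $\pi_1$-surjection; by the long exact sequence of a fibration this is equivalent to the homotopy fibers being connected (compare Corollary~\ref{asgfgjh}(2) with $n=0$).

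First I extract the combinatorial content of the hypothesis. The intersection $\bigcap_{\sigma\in K_1\setminus P}\text{St}(\sigma,A)$ is a non-empty simplicial complex, so it contains some vertex; fix such a $v\in A$. By the definition of $\text{St}(\sigma,A)$, this means that for every $\sigma=\{x,y\}$ in $K_1\setminus P$ (equivalently every edge with $x\in X\setminus A$ and $y\in Y\setminus A$) the triangle $\{v,x,y\}$ lies in $K$. Since $v\in A=X\cap Y$, the edges $\{v,x\}\in K_X$ and $\{v,y\}\in K_Y$ both belong to $K_X\cup K_Y$, so $v$ is adjacent to $x$ and to $y$ inside $K_X\cup K_Y$.

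For the $\pi_0$-isomorphism, surjectivity is immediate from $(K_X\cup K_Y)_0=K_0$. For injectivity I partition the edges of $K$ into those with both endpoints in $X$ (which belong to $K_X$), those with both endpoints in $Y$ (which belong to $K_Y$), and the cross-edges; the first two kinds already lie in $K_X\cup K_Y$, and for a cross-edge $\{x,y\}$ the two-edge path $x$--$v$--$y$ realizes the adjacency inside $K_X\cup K_Y$. Hence every pair of $K$-connected vertices is $(K_X\cup K_Y)$-connected. For the $\pi_1$-surjection, I represent a given class by an edge-loop $\gamma$ in the $1$-skeleton of $K$ and rewrite $\gamma$ by replacing each traversed cross-edge $\{x,y\}$ with the detour $x$--$v$--$y$. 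Each local replacement is a based homotopy in $K$ because the triangle $\{v,x,y\}\in K$ fills the resulting bigon; the modified loop lies entirely in $K_X\cup K_Y$ and represents the same class in $\pi_1(K)$.

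The main obstacle is that the hypothesis is strictly weaker than the assumption demanded by Proposition~\ref{sfsfdgfdhfg} with $n=0$, which would require each individual $\text{St}(\sigma,A)$ to be connected for $\sigma\in K_1\setminus P$. Non-emptiness of the \emph{intersection} does not in general imply connectedness of each factor, so I cannot cite Proposition~\ref{sfsfdgfdhfg} as a black box. The workaround is to exploit globally the single vertex $v$ provided by the common intersection: it simultaneously routes every cross-edge for $\pi_0$ purposes and bounds every cross-edge by a $2$-simplex for $\pi_1$ purposes. The only point requiring care is the standard chaining of elementary bigon homotopies to obtain the desired based homotopy $[\gamma]=[\gamma']$ in $\pi_1(K)$.
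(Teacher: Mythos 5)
Your argument is correct and is, at its core, the same one the paper gives: both proofs single out a vertex $v$ in the common intersection and use it to reroute every cross-edge $\{x,y\}$ through the detour $x$--$v$--$y$, with the $2$-simplex $\{v,x,y\}\in K$ supplying the required filling. The paper packages this rerouting as a functor $\phi\colon\text{sk}_1 K\to P$, sending $\tau\mapsto\tau\cup\{v\}$ for $\tau\in K_1\setminus P$ and $\tau\mapsto\tau$ otherwise, and observes that the inclusions $\tau\subset\phi(\tau)$ form a natural transformation making $g\circ\phi$ homotopic to the skeleton inclusion $\text{sk}_1K\hookrightarrow K$; the conclusion is then read off from Proposition~\ref{asfsdfhg}. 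Your version unpacks exactly this into an explicit $\pi_0$/$\pi_1$ computation with edge-loops: the paper's natural transformation is your elementary bigon homotopy, and the paper's functor is your global replacement rule. The functorial phrasing fits the rest of the paper's machinery and handles basepoints silently (one does have to note that $g$ is $\pi_0$-injective, which follows since $\phi$ is $\pi_0$-surjective and $g\circ\phi$ is $\pi_0$-bijective); your hands-on version makes the geometry visible and needs only the standard fact that $\pi_1$ of a simplicial complex is generated by edge-loops. Your remark that the hypothesis here is strictly weaker than what Proposition~\ref{sfsfdgfdhfg} with $n=0$ would demand (non-emptiness of the intersection does not force each individual $\text{St}(\sigma,A)$ to be connected) is exactly the point, and is why neither your proof nor the paper's can be reduced to citing that proposition.
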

\begin{proof}
Let  $v$ be a vertex  in $\bigcap_{\sigma\in K_{1}\setminus P}\text{St}(\sigma,A)$. 
Observe that
$\text{sk}_{1}(K)$ is a disjoint union of  $K_1\setminus P$ and  $\text{sk}_{1}(K)\cap P$.
This can fail   for $\text{sk}_{n}(K)$ if $n>1$. 
For every $\tau$ in $\text{sk}_{1}(K)$, define:
\[\phi(\tau):=\begin{cases}\tau\cup{v} & \text{ if } \tau\in K_1\setminus P \\ 
\tau &\text{ if } \tau\in \text{sk}_{1}(K)\cap P
\end{cases}\]
If $\tau\subsetneq \tau'$ in  $\text{sk}_{1}(K)$, then $\tau$ is in  $P$ and hence   $\tau=\phi(\tau)\subset \phi(\tau')$.
In this way we obtain a functor $\phi\colon \text{sk}_{1}(K)\to P$.  The inclusion $\tau\subset \phi(\tau)$,
is a natural transformation between the skeleton inclusion $\text{sk}_{1}(K)\subset K$ and the composition:
\[\begin{tikzcd}
 \text{sk}_{1}(K) \arrow[r, hook, "\phi"] & P \arrow[r, hook,"g"] & K
 \end{tikzcd}\]
Thus these two functors from $\text{sk}_{1}(K)$ to $ K$ are homotopic. The statement of the proposition
is then a consequence of
Proposition~\ref{asfsdfhg}. 
\end{proof}

Proposition~\ref{adgsfgjkk} does not generalise to $n>0$. Non-emptiness of the intersection 
$\bigcap_{\sigma\in (\text{sk}_{n+1}K)\setminus P}\text{St}(\sigma,A)$ does not imply that
the homotopy  fibers of  $K_X\cup K_Y\subset K$ are $n$-connected. For an easy example see~\ref{basfasfgdqweter}.
To guarantee $n$-connectedness of these homotopy fibers we need additional restrictions. For example in the following corollary the assumptions imply  that $\text{St}(\sigma,A)$ does not depend on $\sigma$ in $(\text{\rm sk}_{n+1}K)\setminus P$:

\begin{cor}\label{asfdfgjfgjk,}
 Notation as in~\ref{adfdfhgf} and~\ref{asdgfssdtjhgkj}.  Let $n$ be a natural number.
 Assume that one of the following  conditions is satisfied:
 \begin{enumerate}
 \item There is an $n$-connected simplicial complex  $L$ such that, for every  simplex $\sigma$ in $(\text{\rm sk}_{n+1}K)\setminus P$, $\text{\rm St}(\sigma,A)=L$.
 \item The complex $K_A$ is $n$-connected and, for every simplex $\sigma$ in   $(\text{\rm sk}_{n+1}K)\setminus P$, $\text{\rm St}(\sigma,A)=K_A$.
 \item  The set $A$ is non empty. Furthermore, for every simplex $\sigma$ in   $(\text{\rm sk}_{n+1}K)\setminus P$ and every finite subset $\mu$ in $A$, the union $\sigma\cup \mu$ is a simplex in $K$.
 \item $A=\{v\}$ and, for every simplex $\sigma$ in   $(\text{\rm sk}_{n+1}K)\setminus P$,   the  union $\sigma\cup\{v\}$ is also a simplex in $K$.
 \end{enumerate}
Then the homotopy fibers of  $K_X\cup K_Y\subset K$ are $n$-connected.
\end{cor}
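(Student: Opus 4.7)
The plan is to derive all four cases from a single application of Proposition~\ref{sfsfdgfdhfg}, which reduces the problem to verifying that the obstruction complex $\text{St}(\sigma,A)$ is $n$-connected for every simplex $\sigma$ in $(\text{sk}_{n+1}K)\setminus P$. Condition~(1) supplies this verification directly, since by hypothesis $\text{St}(\sigma,A)$ equals the fixed $n$-connected complex $L$ for every such $\sigma$. Condition~(2) is just the specialisation of~(1) with $L=K_A$, so it needs no separate argument.

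For condition~(3), I would unwind the definition of the obstruction complex. The hypothesis that $\sigma\cup\mu\in K$ for every non-empty finite subset $\mu\subset A$ means precisely that $\text{St}(\sigma,A)=\Delta[A]$ for every $\sigma$ in $(\text{sk}_{n+1}K)\setminus P$. Since $A$ is non-empty, $\Delta[A]$ possesses a central simplex (indeed, every simplex of $\Delta[A]$ is central, as recalled in Paragraph~\ref{adghxdfgjhkut}), so Proposition~\ref{adgassgfhjg} makes it contractible and therefore $n$-connected for every $n$. This is exactly condition~(1) with $L=\Delta[A]$. Condition~(4) is then the instance of~(3) in which $A=\{v\}$: its only non-empty finite subset is $\{v\}$ itself, so the assumption $\sigma\cup\{v\}\in K$ trivially matches the hypothesis of~(3).

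I do not expect a real obstacle: each of the four reductions amounts to a definitional unwinding, while the entire topological content is absorbed into Proposition~\ref{sfsfdgfdhfg}. The only point that warrants mild care is the identification $\text{St}(\sigma,A)=\Delta[A]$ under~(3), together with invoking the contractibility of $\Delta[A]$ for non-empty $A$ already recorded in Paragraph~\ref{adghxdfgjhkut}, rather than re-establishing it here.
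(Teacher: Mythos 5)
Your argument is correct and coincides with the paper's own proof: both invoke Proposition~\ref{sfsfdgfdhfg} once, dispose of case (1) directly, and reduce (2), (3), (4) to (1) via $L=K_A$, $L=\Delta[A]$, and the observation that (4) is (3) with $A=\{v\}$. Your only addition is spelling out the identification $\text{St}(\sigma,A)=\Delta[A]$ and the contractibility of $\Delta[A]$ under (3), which the paper leaves implicit.
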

\begin{proof}
The corollary under the assumption (1) is a  direct consequence of Proposition~\ref{sfsfdgfdhfg}.
The assumption (2) is a  particular case of (1) with $L=K_A$. The assumption (3) is a particular case of (1) with
$L=\Delta[A]$. Finally,  the assumption (4)  is a particular case of (3).
\end{proof}

Here is another example of a statement whose assumption,  referred to as  ``one entry point", has  a global nonemptiness consequence:

\begin{cor}\label{afgsfdhfg}
 Notation as in~\ref{adfdfhgf} and~\ref{asdgfssdtjhgkj}.   Let $n$ be a natural number.
Assume there is an element $v$  in $A$ with the following property. For every simplex 
 $\tau$ in $K$ such that  $\tau\cap (X\setminus A)\not=\emptyset$,  $\tau\cap (Y\setminus A)\not=\emptyset$, and 
 $|\tau\cap (K_0\setminus A)|\leq n+2$, the union  $\tau\cup\{v\}$ is also a simplex in $K$.
 Then, for every  simplex $\sigma$ in  $(\text{\rm sk}_{n+1}K)\setminus P$, the element $v$ is a central vertex (see~\ref{adghxdfgjhkut}) in $\text{\rm St}(\sigma,A)$. Furthermore the homotopy fibers of  $K_X\cup K_Y\subset K$ are $n$-connected.
\end{cor}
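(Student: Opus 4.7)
The plan is to verify the hypothesis of Proposition~\ref{sfsfdgfdhfg}: that $\text{St}(\sigma,A)$ is $n$-connected for every $\sigma$ in $(\text{sk}_{n+1}K)\setminus P$. In fact I will upgrade this to contractibility by exhibiting $v$ as a central vertex of each such obstruction complex, then invoke the contractibility consequence of having a central simplex recorded in~\ref{adghxdfgjhkut}.

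Fix a simplex $\sigma$ in $(\text{sk}_{n+1}K)\setminus P$. By the definition of $K\setminus P$, $\sigma\cap A=\emptyset$, $\sigma\cap X\neq\emptyset$ and $\sigma\cap Y\neq\emptyset$. The first equality forces $\sigma\subset K_0\setminus A$, so $\sigma\cap(X\setminus A)=\sigma\cap X\neq\emptyset$ and likewise for $Y$. Since $\sigma$ has dimension at most $n+1$, $|\sigma\cap(K_0\setminus A)|=|\sigma|\leq n+2$. Applying the one-entry-point hypothesis with $\tau=\sigma$ gives $\sigma\cup\{v\}\in K$, so in particular $\{v\}\in\text{St}(\sigma,A)$.

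To check centrality of $\{v\}$ in $\text{St}(\sigma,A)$, take any $\mu\in\text{St}(\sigma,A)$ and apply the hypothesis to $\tau:=\mu\cup\sigma\in K$. Because $\mu\subset A$, we have $\tau\cap(K_0\setminus A)=\sigma$; hence the three conditions on $\tau$ transfer verbatim from those already verified for $\sigma$, and in particular the cardinality bound $|\tau\cap(K_0\setminus A)|\leq n+2$ is preserved. The hypothesis therefore yields $\mu\cup\sigma\cup\{v\}\in K$, which is exactly the assertion $\mu\cup\{v\}\in\text{St}(\sigma,A)$. So $v$ is central in $\text{St}(\sigma,A)$, and~\ref{adghxdfgjhkut} makes this obstruction complex contractible, hence $n$-connected. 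Proposition~\ref{sfsfdgfdhfg} then delivers the desired $n$-connectedness of the homotopy fibers of $K_X\cup K_Y\subset K$.

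The only real subtlety — not so much an obstacle as a point to track carefully — is that the hypothesis must be applied to $\tau=\mu\cup\sigma$ rather than to $\mu$ alone, and it is the containment $\mu\subset A$ that keeps $\tau\cap(K_0\setminus A)$ equal to $\sigma$ and so preserves both the two non-emptiness conditions and the cardinality bound $\leq n+2$. Once this bookkeeping is in place, the argument is a direct verification.
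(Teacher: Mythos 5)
Your proof is correct and follows essentially the same route as the paper's: apply the one-entry-point hypothesis to $\tau=\mu\cup\sigma$ (using $\mu\subset A$ and $\sigma\cap A=\emptyset$ to keep $\tau\cap(K_0\setminus A)=\sigma$ of size at most $n+2$), conclude $v$ is central so $\text{St}(\sigma,A)$ is contractible, and finish with Proposition~\ref{sfsfdgfdhfg}. You are in fact slightly more careful than the paper in explicitly checking $\{v\}\in\text{St}(\sigma,A)$ via $\tau=\sigma$, which is needed for centrality to be non-vacuous.
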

\begin{proof}
Let  $\sigma$ be a simplex in $ (\text{sk}_{n+1}K)\setminus P$. 
 If  $\mu$ belongs to $\text{\rm St}(\sigma,A)$ then,  by 
applying the assumption  of the corollary  to $\tau=\sigma\cup\mu$, we obtain that  $\sigma\cup\mu\cup\{v\}$ is a simplex in $K$ and hence  $\mu\cup\{v\}$ is  a simplex
in $\text{\rm St}(\sigma,A)$.  This means that $v$ is central in $\text{\rm St}(\sigma,A)$ (see~\ref{adghxdfgjhkut}).  Consequently  $\text{\rm St}(\sigma,A)$ is contractible (see~\ref{adghxdfgjhkut}) and the corollary follows from Proposition~\ref{sfsfdgfdhfg}.
\end{proof}

\section{Clique complexes}\label{dsfgdfhsfghj}
 Recall that  a simplicial complex $K$ is called 
 {\bf clique}  if it satisfies the following condition: a set  $\sigma$ of  size at least $ 2$ is a simplex in $K$ if and only if all the two element subsets of $\sigma$ are simplices in $K$.  Thus a clique complex is determined by its sets of vertices and edges. 

If $K$ is clique, then the   complexes $\text{\rm St}(\sigma, A)$ satisfy the following  properties:

\begin{prop}\label{asfgsdfhjghkhjgl}
Notation as in~\ref{adfdfhgf}. Assume $K$ is clique. Then:
\begin{enumerate}
\item  For all  $\sigma$ in $K$,
$\text{\rm St}(\sigma ,A)$ is  clique.
\item If   $\tau$ and $\sigma$ are  simplices in  $K$ such that  $\tau\cup\sigma$ is also a simplex in $K$, then
$\text{\rm St}(\tau\cup\sigma,A)=\text{\rm St}(\tau,A)\cap \text{\rm St}(\sigma,A)$. 
\item If $\sigma$ is a simplex in $K$ and $\sigma=\tau_1\cup\cdots\cup\tau_n$, then
$\text{\rm St}(\sigma,A)=\bigcap_{i=1}^{n}\text{\rm St}(\tau_i,A)$.
\item For every simplex $\sigma$ in $K$, $\text{\rm St}(\sigma,A)=\bigcap_{x\in \sigma}\text{\rm St}(\{x\},A)$.
\end{enumerate}
\end{prop}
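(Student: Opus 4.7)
The plan is to prove the four parts in sequence, with (3) and (4) following by straightforward iteration of (2), so that the real content lies in (1) and (2). All four arguments reduce to unwinding the definition $\text{St}(\sigma,A)=\{\mu\subset A\mid 0<|\mu|,\ \mu\cup\sigma\in K\}$ and using the clique property of $K$: a set of size at least $2$ lies in $K$ if and only if each of its $2$-element subsets does.

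For (1), I fix $\mu\subset A$ with $|\mu|\geq 2$ and assume every $2$-element subset $\{x,y\}\subset\mu$ lies in $\text{St}(\sigma,A)$, i.e.\ $\{x,y\}\cup\sigma\in K$. To conclude $\mu\cup\sigma\in K$ via clique-ness of $K$, I inspect a pair in $\mu\cup\sigma$: pairs inside $\sigma$ belong to $K$ because $\sigma\in K$; pairs inside $\mu$ are witnessed by any $\{x,y\}\cup\sigma\in K$; and a mixed pair $\{x,s\}$ with $x\in\mu$ and $s\in\sigma$ sits inside $\{x,y\}\cup\sigma\in K$ for any $y\in\mu\setminus\{x\}$, which exists since $|\mu|\geq 2$. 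Thus $\mu\cup\sigma\in K$, and since $\mu\subset A$ and $|\mu|>0$ we get $\mu\in\text{St}(\sigma,A)$.

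For (2), the inclusion $\text{St}(\tau\cup\sigma,A)\subset\text{St}(\tau,A)\cap\text{St}(\sigma,A)$ is immediate from closure of $K$ under non-empty subsets. For the reverse, given $\mu\subset A$ with $\mu\cup\tau\in K$ and $\mu\cup\sigma\in K$ (and $\tau\cup\sigma\in K$ by hypothesis), I again invoke the clique property of $K$ for $\mu\cup\tau\cup\sigma$. Every pair of vertices in that union lies either entirely in $\mu\cup\tau$, entirely in $\mu\cup\sigma$, or entirely in $\tau\cup\sigma$, and all three are simplices of $K$; hence every such pair is in $K$, so $\mu\cup\tau\cup\sigma\in K$. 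The one bookkeeping point is the edge case $|\mu\cup\tau\cup\sigma|=1$, where the conclusion is trivial because the single vertex is already witnessed to be in $K$ by $\mu\cup\sigma\in K$.

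Part (3) is then an induction on $n$ using (2): at each stage $\tau_1\cup\cdots\cup\tau_k\subset\sigma\in K$ is still a simplex, so (2) applies to combine $\text{St}(\tau_1\cup\cdots\cup\tau_k,A)\cap\text{St}(\tau_{k+1},A)=\text{St}(\tau_1\cup\cdots\cup\tau_{k+1},A)$. Part (4) is the specialization $\tau_i=\{x_i\}$ in the decomposition $\sigma=\bigcup_{x\in\sigma}\{x\}$. The main obstacle, to the extent there is one, is the case analysis in (2) ensuring that every pair is covered by one of the three known simplices and that degenerate sizes of $\mu$, $\tau$, or $\sigma$ do not spoil the clique reduction; beyond that the proposition is definition chasing.
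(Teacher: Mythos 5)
Your proof is correct and follows essentially the same route as the paper's: part (1) by checking that every two-element subset of $\mu\cup\sigma$ lies in some $\{x,y\}\cup\sigma\in K$, part (2) by covering the pairs of $\mu\cup\tau\cup\sigma$ with the three given simplices, and parts (3)--(4) by iterating (2). You simply spell out the case analysis for (1) and the degenerate size issue in (2) that the paper leaves implicit.
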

\begin{proof}
Let $\mu$ be a subset of $A$  such that, for every 
two element subset $\tau$ of $\mu$, 
the set  $\tau\cup \sigma$ is a simplex in $K$, i.e., $\tau$ is in $\text{\rm St}(\sigma,A)$. Then, since $K$ is clique, $\mu\cup \sigma$ is also
a simplex in $K$. Consequently $\mu$ belongs to  $\text{\rm St}(\sigma, A)$ and   hence $\text{\rm St}(\sigma, A)$ is clique.
That proves (1).

To prove (2), first note that the inclusion $\text{\rm St}(\tau\cup\sigma,A)\subset \text{\rm St}(\tau,A)\cap \text{\rm St}(\sigma,A)$
holds even without the clique assumption. Let $\mu$ belong to both $\text{\rm St}(\tau,A)$ and $\text{\rm St}(\sigma,A)$.
This means that $\mu\cup\tau$ and $\mu\cup\sigma$ are simplices in $K$. Since every 2 element subset of 
$\mu\cup\tau\cup\sigma$ is a subset of either  $\mu\cup\tau$ or $\mu\cup\sigma$ or  $\tau\cup\sigma$,
by the assumption it is an edge in $K$. By the clique assumption,   $\mu\cup\tau\cup\sigma$ is then also a simplex in $K$ and consequently $\mu$ is in $\text{\rm St}(\tau\cup\sigma,A)$. This shows the other inclusion
$\text{\rm St}(\tau\cup\sigma,A)\supset \text{\rm St}(\tau,A)\cap \text{\rm St}(\sigma,A)$ proving (2).

Statements (3) and (4) follow from (2).
\end{proof}

Recall that an intersection of standard simplices is  again a standard simplex  (see~\ref{asfdfhd}).
This observation together with Propositions~\ref{sfsfdgfdhfg} and~\ref{asfgsdfhjghkhjgl} gives:
\begin{cor}\label{sfsgdhgjn}
Notation as in~\ref{adfdfhgf} and~\ref{asdgfssdtjhgkj}. Assume $K$ is clique and, 
for every edge $\tau$ in $K_1\setminus  P$, the complex $\text{\rm St}(\tau,A)$
is a standard simplex.  If, for all simplices $\sigma$ in $(\text{\rm sk}_{n+1}K)\setminus P$,  the complex
$\text{\rm St}(\sigma,A)$ is non-empty, then  the homotopy fibers of the 
 inclusion  $K_X\cup K_Y\hookrightarrow K$  are $n$-connected.
\end{cor}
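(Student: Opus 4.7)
The plan is to reduce the statement to Proposition~\ref{sfsfdgfdhfg}: it suffices to show that, for every simplex $\sigma$ in $(\text{sk}_{n+1}K)\setminus P$, the obstruction complex $\text{St}(\sigma, A)$ is $n$-connected. In fact, I will show the stronger conclusion that $\text{St}(\sigma, A)$ is a non-empty standard simplex, hence contractible, hence $n$-connected.

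First I would unpack the combinatorics. Since $\sigma \in K \setminus P$, by definition $\sigma \cap X \neq \emptyset$, $\sigma \cap Y \neq \emptyset$, and $\sigma \cap A = \emptyset$. Thus $\sigma$ decomposes as a disjoint union $(\sigma \cap X) \sqcup (\sigma \cap Y)$ with both pieces non-empty, and every vertex of $\sigma$ lies in either $X \setminus A$ or $Y \setminus A$. For each pair $(x, y) \in (\sigma \cap X) \times (\sigma \cap Y)$, the edge $\tau_{x,y} := \{x, y\}$ is a simplex of $K$ (as a subset of $\sigma$), it has a vertex in $X \setminus A$ and a vertex in $Y \setminus A$ and no vertex in $A$, so $\tau_{x,y} \in K_1 \setminus P$. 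By hypothesis, each $\text{St}(\tau_{x,y}, A)$ is a standard simplex.

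Next I would combine these using the clique hypothesis. The family $\{\tau_{x,y}\}_{(x,y)}$ covers $\sigma$, i.e.\ $\sigma = \bigcup_{x,y}\tau_{x,y}$, so Proposition~\ref{asfgsdfhjghkhjgl}.3 gives
\[
\text{St}(\sigma, A) \;=\; \bigcap_{(x,y)\in (\sigma \cap X) \times (\sigma \cap Y)} \text{St}(\tau_{x,y}, A).
\]
From~\ref{asfdfhd}, an intersection of standard simplices is again a standard simplex (possibly empty), so $\text{St}(\sigma, A)$ is a standard simplex. The non-emptiness assumption on $\text{St}(\sigma, A)$ for $\sigma \in (\text{sk}_{n+1}K)\setminus P$ then forces it to be a non-empty standard simplex, which is contractible by~\ref{adghxdfgjhkut} (every simplex in $\Delta[X]$ is central).

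Finally, since contractible simplicial complexes are $n$-connected, Proposition~\ref{sfsfdgfdhfg} applies and yields that the homotopy fibers of $K_X \cup K_Y \hookrightarrow K$ are $n$-connected. There is no real obstacle here; the only thing to verify carefully is that the covering edges $\tau_{x,y}$ all genuinely lie in $K_1 \setminus P$, which is immediate from the description $\sigma \cap A = \emptyset$ together with $\sigma \cap X, \sigma \cap Y \neq \emptyset$.
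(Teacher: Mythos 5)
Your proof is correct and is exactly the argument the paper has in mind; the paper simply states that the corollary follows from the observation that an intersection of standard simplices is a standard simplex together with Propositions~\ref{sfsfdgfdhfg} and~\ref{asfgsdfhjghkhjgl}, and you have supplied the details. The key step — writing $\sigma = \bigcup_{(x,y)} \tau_{x,y}$ over pairs in $(\sigma\cap X)\times(\sigma\cap Y)$, checking each $\tau_{x,y}$ lies in $K_1\setminus P$, and invoking Proposition~\ref{asfgsdfhjghkhjgl}.3 to express $\text{St}(\sigma,A)$ as the intersection of standard simplices — is precisely what is needed, and the non-emptiness hypothesis then yields contractibility so that Proposition~\ref{sfsfdgfdhfg} applies.
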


Since clique complexes are determined by their edges, one can  wonder if, for such  complexes, 
the conclusions of Corollaries~\ref{asfdfgjfgjk,} and~\ref{afgsfdhfg} would still hold true 
 if  their   assumptions are verified  only for low dimensional simplices. 
 Here is an analogue   of Corollary~\ref{asfdfgjfgjk,}  for clique complexes.
 
 \begin{prop}\label{asfsdhowtreq}
 Notation as in~\ref{adfdfhgf}  and~\ref{asdgfssdtjhgkj}.  Let $n$ be a natural number.
 Assume  $K$ is clique and  that one of the following conditions is satisfied:
 \begin{enumerate}
 \item There is an $n$-connected simplicial complex  $L$ such that, for every  edge  $\tau$ in $K_1\setminus P$,  $\text{\rm St}(\tau,A)=L$.
 \item The complex $K_A$ is $n$-connected and, for every edge $\tau$ in   $K_1\setminus P$ 
 and every element  $v$ in $A$, the set $\tau\cup\{v\}$ is a simplex in $K$. 
   \end{enumerate}
 Then the homotopy fibers of the 
 inclusion  $K_X\cup K_Y\hookrightarrow K$  are $n$-connected.
 \end{prop}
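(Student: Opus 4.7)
The plan is to reduce each of the hypotheses to one of the forms of Corollary~\ref{asfdfgjfgjk,}. Concretely, I will show that under either assumption the obstruction complex $\text{\rm St}(\sigma,A)$ is independent of $\sigma$ (and equal to an $n$-connected complex) not just for edges, but for every simplex $\sigma\in K\setminus P$. Once this is established, Corollary~\ref{asfdfgjfgjk,}(1) (for hypothesis (1), with the stated $L$) or Corollary~\ref{asfdfgjfgjk,}(2) (for hypothesis (2), with $L=K_A$) immediately delivers $n$-connectedness of the homotopy fibers.

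The main lemma I would prove first is a covering statement: every simplex $\sigma$ in $K\setminus P$ is the union of edges belonging to $K_1\setminus P$. Indeed, since $\sigma\cap X\neq\emptyset$, $\sigma\cap Y\neq\emptyset$ and $\sigma\cap A=\emptyset$, I can choose $x\in\sigma\cap(X\setminus A)$ and $y\in\sigma\cap(Y\setminus A)$ and write
\[
\sigma=\bigcup_{x'\in\sigma\cap(X\setminus A)}\{x',y\}\;\cup\;\bigcup_{y'\in\sigma\cap(Y\setminus A)}\{x,y'\}.
\]
Each $\{x',y\}$ and $\{x,y'\}$ is a subset of $\sigma$, hence a simplex of $K$, and has one endpoint in $X\setminus A$ and the other in $Y\setminus A$, so lies in $K_1\setminus P$. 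By Proposition~\ref{asfgsdfhjghkhjgl}(3), $\text{\rm St}(\sigma,A)$ equals the intersection of $\text{\rm St}(\tau,A)$ taken over these edges $\tau$.

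Under hypothesis (1), each $\text{\rm St}(\tau,A)$ in the above intersection equals $L$, so $\text{\rm St}(\sigma,A)=L$ for every $\sigma\in K\setminus P$, and in particular for every simplex in $(\text{\rm sk}_{n+1}K)\setminus P$; Corollary~\ref{asfdfgjfgjk,}(1) applies. Under hypothesis (2), I first verify that $\text{\rm St}(\tau,A)=K_A$ for every edge $\tau=\{x,y\}\in K_1\setminus P$. The inclusion $\text{\rm St}(\tau,A)\subseteq K_A$ is automatic. For the reverse inclusion, let $\mu\in K_A$ and inspect the two-element subsets of $\mu\cup\{x,y\}$: they are either contained in $\mu$ (hence edges of $K$), equal to $\{x,y\}$ (an edge of $K$), or of the form $\{v,x\}$ or $\{v,y\}$ with $v\in\mu\subseteq A$; the hypothesis on (2) provides the simplex $\{x,y,v\}\in K$, which contains these last edges. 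Cliqueness of $K$ then yields $\mu\cup\{x,y\}\in K$, so $\mu\in\text{\rm St}(\tau,A)$. Combining this with the covering argument gives $\text{\rm St}(\sigma,A)=K_A$ for every $\sigma\in K\setminus P$, and Corollary~\ref{asfdfgjfgjk,}(2) concludes.

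The only delicate point is the covering observation together with the verification that each edge produced really lies in $K_1\setminus P$; once this is in hand the result is a direct specialization of Corollary~\ref{asfdfgjfgjk,} using Proposition~\ref{asfgsdfhjghkhjgl}, and no homotopy-theoretic input beyond what is already recorded is needed.
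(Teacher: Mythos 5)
Your proof is correct and takes essentially the same route as the paper: in both cases the point is to show that hypothesis~(1) or~(2) forces $\text{\rm St}(\sigma,A)$ to be a fixed $n$-connected complex not merely for edges but for every $\sigma\in(\text{\rm sk}_{n+1}K)\setminus P$, and then to invoke Corollary~\ref{asfdfgjfgjk,}. The one thing you add is an explicit statement and proof of the covering lemma (that every $\sigma\in K\setminus P$ is a union of edges from $K_1\setminus P$), which the paper leaves implicit in its terse reference to Proposition~\ref{asfgsdfhjghkhjgl}; you invoke part~(3) of that proposition, which fits the covering decomposition into edges more directly than the part~(4) cited in the paper, though either can be made to work. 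Your handling of hypothesis~(2) — showing $K_A\subseteq\text{\rm St}(\tau,A)$ for edges $\tau\in K_1\setminus P$ via cliqueness and then appealing to the same covering argument — also matches the paper's reduction of~(2) to~(1) with $L=K_A$.
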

 \begin{proof}
 The assumption (1) together with Proposition~\ref{asfgsdfhjghkhjgl}.(4) implies the assumption (1) of  
 Corollary~\ref{asfdfgjfgjk,}, proving the proposition in this case. 
 
  Let 
 $\tau$ be  an edge in  $K_1\setminus P$ and  $\mu$ be a simplex in $K_A$. Assume  (2).  This assumption  implies that any two element subset of $\tau\cup\mu$ is a simplex in $K$.  Since $K$ is clique, the set 
 $\tau\cup\mu$ is a simplex in $K$ and consequently $\mu$  is a simplex in $\text{\rm St}(\tau,A)$. 
 Thus for any $\tau$ in $K_1\setminus P$, there is an inclusion $K_A\subset \text{\rm St}(\tau,A)$,
 and hence $K_A= \text{\rm St}(\tau,A)$ for any such $\tau$. 
 The assumption (2) implies therefore the assumption (1) with $L=K_A$. 
  \end{proof}
 \begin{cor}\label{sasfdsdfhfdhjrtyj}
  Notation as in~\ref{adfdfhgf}  and~\ref{asdgfssdtjhgkj}.
   Assume  $K$ is clique and  that one of the following conditions is satisfied:
 \begin{enumerate}
 \item The set $A$ is non empty. Furthermore, for every edge $\tau$ in   $K_1\setminus P$ and every  subset $\mu$ in $A$ such that $|\mu|\leq 2$,  the union $\tau\cup\mu$ is a simplex in $K$. 
  \item  $A=\{v\}$ and, for every edge $\tau$ in    $K_1\setminus P$,   the set $\tau\cup\{v\}$ is also a simplex in $K$.
 \end{enumerate}
 Then the 
 inclusion  $K_X\cup K_Y\hookrightarrow K$ is a weak equivalence.
 \end{cor}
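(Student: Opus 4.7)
The plan is to apply Proposition~\ref{asfsdhowtreq}(1) with the contractible simplicial complex $L=\Delta[A]$. Since $A$ is non empty, $\Delta[A]$ has a central simplex and hence is contractible by~\ref{adghxdfgjhkut}; in particular it is $n$-connected for every $n\geq 0$. If we can verify the hypothesis $\text{St}(\tau,A)=\Delta[A]$ for every edge $\tau\in K_1\setminus P$, then Proposition~\ref{asfsdhowtreq}(1) applied with this $L$ and every $n$ will show that the homotopy fibers of $K_X\cup K_Y\hookrightarrow K$ are $n$-connected for every $n$, which amounts to a weak equivalence.

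I would first observe that condition (2) is the special case of (1) corresponding to $A=\{v\}$, since the only non empty subset of $\{v\}$ of size at most $2$ is $\{v\}$ itself, so it suffices to treat (1).

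The main computation is to show, under (1), that $\text{St}(\tau,A)=\Delta[A]$ for every edge $\tau\in K_1\setminus P$. The inclusion $\text{St}(\tau,A)\subset\Delta[A]$ is automatic, so I need to check that every finite non empty $\mu\subset A$ satisfies $\tau\cup\mu\in K$. Since $K$ is clique, it is enough to verify that every two element subset of $\tau\cup\mu$ is an edge of $K$. There are three kinds of such subsets: (i) $\tau$ itself, which is an edge by assumption; (ii) a pair $\{t,a\}$ with $t\in\tau$ and $a\in\mu$, which lies inside $\tau\cup\{a\}$, a simplex in $K$ by assumption (1) applied to the singleton $\{a\}\subset A$; and (iii) a pair $\{a_1,a_2\}\subset\mu$, which lies inside $\tau\cup\{a_1,a_2\}$, a simplex in $K$ by assumption (1) applied to $\{a_1,a_2\}\subset A$ of size at most $2$. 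Hence every two element subset of $\tau\cup\mu$ is an edge of $K$, and the clique hypothesis yields $\tau\cup\mu\in K$.

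Having established that $\text{St}(\tau,A)$ coincides with the contractible complex $\Delta[A]$ for every $\tau\in K_1\setminus P$, the statement follows from Proposition~\ref{asfsdhowtreq}(1). I do not anticipate any real obstacle: the only place where one needs to be careful is to make sure that the clique hypothesis is used to promote the $|\mu|\leq 2$ assumption to all finite $\mu\subset A$, which is precisely the three case check above.
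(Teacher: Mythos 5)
Your proof is correct and takes essentially the same route as the paper: both arguments reduce the corollary to Proposition~\ref{asfsdhowtreq}. The only real difference is that you verify hypothesis (1) of that proposition directly, checking $\text{St}(\tau,A)=\Delta[A]$ for every edge $\tau\in K_1\setminus P$ via a three-case clique argument, whereas the paper instead shows $K_A=\Delta[A]$ and invokes hypothesis (2) of Proposition~\ref{asfsdhowtreq}; the paper's route requires a short preliminary case split on whether $K_1\setminus P$ is empty (to deduce $K_A=\Delta[A]$ one needs at least one edge $\tau$ to play against), which your route avoids since the condition ``$\text{St}(\tau,A)=L$ for every $\tau\in K_1\setminus P$'' is vacuously satisfied when that set is empty and $L=\Delta[A]$ is still contractible. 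Both are minor restatements of the same idea, and your three-case check is essentially the content of the reduction from condition (2) to condition (1) in the proof of Proposition~\ref{asfsdhowtreq} itself.
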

 \begin {proof}
 Assume (1). If $K_1\setminus P$ is empty, then so is $K\setminus P$, and hence $P=K$. In this case the corollary follows from Proposition~\ref{asfsdfg}. Assume $K_1\setminus P$ is non-empty.  Since any two element subset of $A$ is a simplex in $K$ and $K$ is clique, then all finite non-empty subsets of $A$ belong to  $K$ and hence $K_A=\Delta[A]$. In this case the assumption (1)
 is a particular case of the condition (2) in Proposition~\ref{asfsdhowtreq} for  all $n$ as $\Delta[A]$ is contractible. 
 
 Finally note that the assumption (2) is a particular case of (1). 
  \end{proof}
  The following is an analogue of Corollary~\ref{afgsfdhfg} which  is also referred to as ``one entry point".

 \begin{prop}\label{aSDFGDSFHFHGJKUI}
  Notation as in~\ref{adfdfhgf}  and~\ref{asdgfssdtjhgkj}.
 Assume  $K$ is clique and  that one of the following conditions is satisfied:
\begin{enumerate}
\item There is a vertex  $v$ in  $\bigcap_{\tau\in K_1\setminus P} \text{\rm St}(\tau,A)$ such that, for every edge $\tau$ in $K_1\setminus P$ and every vertex $w$ in  $\text{\rm St}(\tau,A)$, $\{v,w\}$ is a simplex in $K$.
\item There is a vertex  $v$ in  $\bigcap_{\tau\in K_1\setminus P} \text{\rm St}(\tau,A)$ such that, for every edge $\tau$ in $K_1\setminus P$, $v$ is a central vertex of $\text{\rm St}(\tau,A)$ (see~\ref{adghxdfgjhkut}). 
\item  There is an element $v$  in $A$ with the following property. For every 
simplex $\tau$ in $K$ such that  $|\tau\cap (X\setminus A)|=1$, $|\tau\cap (Y\setminus A)|=1$, and
$|\tau\cap A|\leq 1$,  the  union $\tau\cup\{v\}$ is also a simplex in $K$.
\end{enumerate}
 Then the 
 inclusion $K_X\cup K_Y\hookrightarrow K$ is a weak equivalence.
 \end{prop}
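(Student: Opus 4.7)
The plan is to establish the chain $(3)\Rightarrow(1)\Rightarrow(2)$ and then to show that $(2)$ forces $v$ to be a central vertex of $\text{St}(\sigma,A)$ for every $\sigma\in K\setminus P$. By~\ref{adghxdfgjhkut} each such obstruction complex is then contractible, and Corollary~\ref{sdfdfghsd}(1) delivers the weak equivalence.

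For $(3)\Rightarrow(1)$, an edge $\tau=\{x,y\}\in K_1\setminus P$ has $x\in X\setminus A$ and $y\in Y\setminus A$, so $(3)$ applied to $\tau$ itself gives $\tau\cup\{v\}\in K$ and hence $v\in\text{St}(\tau,A)$; for a vertex $w$ of $\text{St}(\tau,A)$, the $2$-simplex $\tau\cup\{w\}$ again satisfies the hypothesis of $(3)$ (one vertex in each of $X\setminus A$, $Y\setminus A$ and $A$), so $\tau\cup\{v,w\}\in K$ and in particular $\{v,w\}\in K$. For $(1)\Rightarrow(2)$, note that $\text{St}(\tau,A)$ is clique by Proposition~\ref{asfgsdfhjghkhjgl}(1), so checking $\mu\cup\{v\}\cup\tau\in K$ for $\mu\in\text{St}(\tau,A)$ reduces by cliqueness of $K$ to verifying that each $2$-element subset is an edge: edges inside $\mu\cup\tau$ come from $\mu\cup\tau\in K$, edges $\{v,a\}$ with $a\in\tau$ come from $v\in\text{St}(\tau,A)$, and edges $\{v,w\}$ with $w\in\mu$ come from the hypothesis of $(1)$.

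The main step is the propagation of centrality from $K_1\setminus P$ to all of $K\setminus P$. Given $\sigma\in K\setminus P$, pick $x\in\sigma\cap X$ and $y\in\sigma\cap Y$; since $\sigma\cap A=\emptyset$, the edge $\tau_0=\{x,y\}$ lies in $K_1\setminus P$. For any $a\in\sigma$, the edge $\{v,a\}$ belongs to $K$: if $a\in X\setminus A$ pair it with $y$, and if $a\in Y\setminus A$ pair it with $x$, forming in each case an edge in $K_1\setminus P$ containing $a$, whence $v\in\text{St}(\cdot,A)$ supplies the desired edge. Cliqueness then yields $\sigma\cup\{v\}\in K$. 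Moreover every vertex $w$ of $\text{St}(\sigma,A)$ lies in $\text{St}(\tau_0,A)$ because $\tau_0\cup\{w\}\subset\sigma\cup\{w\}\in K$, so centrality of $v$ in $\text{St}(\tau_0,A)$ forces $\{v,w\}\in K$; a last application of cliqueness gives $\mu\cup\{v\}\cup\sigma\in K$ for every $\mu\in\text{St}(\sigma,A)$, proving that $v$ is central in $\text{St}(\sigma,A)$.

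The conceptual hurdle is precisely this last step, where the edge-level ``one entry point'' condition must be bootstrapped to arbitrary simplices. The rest is routine: cliqueness reduces every membership question in $K$ to a question about $2$-element subsets, and the auxiliary edge $\tau_0\subset\sigma$ guarantees that each vertex of $\sigma$ and each vertex of $\text{St}(\sigma,A)$ can be channelled through an edge of $K_1\setminus P$ to which the hypothesis applies.
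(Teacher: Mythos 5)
Your proposal is correct and follows essentially the same strategy as the paper: show that under each hypothesis the single-entry vertex $v$ is central in $\text{St}(\sigma,A)$ for all $\sigma\in K\setminus P$, hence every such obstruction complex is contractible, and conclude via Corollary~\ref{sdfdfghsd}(1). The only (harmless) variations are cosmetic: the paper proves the centrality propagation under hypothesis~(1) by covering $\sigma$ with edges from $K_1\setminus P$ and invoking the intersection formula $\text{St}(\sigma,A)=\bigcap_i\text{St}(\tau_i,A)$ from Proposition~\ref{asfgsdfhjghkhjgl}, and reduces (2) and (3) to (1); you instead propagate under (2) using a single edge $\tau_0\subset\sigma$ together with the elementary containment $\text{St}(\sigma,A)\subset\text{St}(\tau_0,A)$, and route the implications $(3)\Rightarrow(1)\Rightarrow(2)$.
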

 \begin{proof}
 Assume (1). Let   $\sigma$  be a simplex  in $K\setminus P$. Choose a cover $\sigma=\tau_1\cup\cdots\cup \tau_n$ where $\tau_i$ is an edge in $K_1\setminus P$ for all $i$. Then according to Proposition~\ref{asfgsdfhjghkhjgl}, $\text{\rm St}(\sigma,A)=\bigcap_{i=1}^{n}\text{\rm St}(\tau_i,A)$. Let $w$ be a vertex in $\text{\rm St}(\sigma,A)$. Then it is also a vertex in $\text{\rm St}(\tau_i,A)$ for all $i$. By the assumption $\{v,w\}$ is then a simplex in $K$. 
 Thus all the 2 element subsets of $\sigma\cup\{v,w\}$ are simplices in $K$ and hence  $\{v,w\}$ is a simplex in
  $F(\sigma,A)$. 
 As this happens for all vertices $w$ in $\text{\rm St}(\sigma,A)$, since $\text{\rm St}(\sigma,A)$ is clique, for every simplex $\mu$ in $\text{\rm St}(\sigma,A)$, the set $\mu\cup\{v\}$ is  also a simplex in  $\text{\rm St}(\sigma,A)$.
 The vertex $v$ is therefore   central  in $\text{\rm St}(\sigma,A)$ and consequently $\text{\rm St}(\sigma,A)$ is contractible. The proposition under  assumption (1) follows then from 
 Corollary~\ref{sdfdfghsd}.(1).
 
 Condition (2) is a particular case of (1).
 
 Assume (3). Let  $\tau$ be an edge in $K_1\setminus P$.
 Condition (3) applied to the simplex $\tau$ gives that $\tau\cup\{v\}$ is a simplex in $K$ and hence
 $v$ is a vertex in $\text{\rm St}(\tau,A)$. 
Let $w$ be a vertex in $\text{\rm St}(\tau,A)$. Condition (3) applied to the simplex $\tau\cup\{w\}$   gives that $\{v,w\}\subset\tau\cup\{v,w\}$ are simplces in $K$. We can conclude  (3) implies (1).
 \end{proof}

We finish this section with   a statement 
  referred to as ``two entry points". This has been inspired by~\cite[Theorem 3]{NewHenry}, in which the gluing of two metric graphs along a path is considered. While in that case the two entry points are the endpoints of the path the graphs are glued along, in our framework they have to satisfy one of the listed properties. In both cases however these couple of points determine the weak equivalence stated.

  \begin{prop}\label{sdfhdghkyioui}
 Notation as in~\ref{adfdfhgf}  and~\ref{asdgfssdtjhgkj}.
Assume  $K$ is clique and   there are two elements $a_X$ and $a_Y$ in $A$ with the following properties:
\begin{itemize}
\item For every edge $\tau$ in  $K_1$ such that  $|\tau\cap A|=1$ and $|\tau\cap (X\setminus A)|=1$,
the set 
$\tau\cup\{a_X\}$ is a simplex in $K$.
\item For every edge $\tau$ in  $K_1$ such that  $|\tau\cap A|=1$ and $|\tau\cap (Y\setminus A)|=1$,
the set 
$\tau\cup\{a_Y\}$ is a simplex in $K$.
\item 
For every edge $\tau$   in $K_1\setminus P$, the set  $\tau\cup\{a_X,a_Y\}$ is a simplex in $K$.

Then, for every  $\sigma$ in $K\setminus P$, the set $\{a_X,a_Y\}$ is a central simplex (see~\ref{adghxdfgjhkut}) in $\text{\rm St}(\sigma,A)$, and 
the inclusion $K_X\cup K_Y\subset K$ is a weak equivalence.
\end{itemize}
\end{prop}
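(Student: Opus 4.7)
The plan is to reduce the statement to Corollary~\ref{sdfdfghsd}.(1), by establishing that, for every $\sigma\in K\setminus P$, the set $\{a_X,a_Y\}$ is a central simplex in the complex $\text{St}(\sigma,A)$, which is itself clique by Proposition~\ref{asfgsdfhjghkhjgl}.(1). Once this is in hand,~\ref{adghxdfgjhkut} immediately gives contractibility of each $\text{St}(\sigma,A)$, which is exactly the hypothesis Corollary~\ref{sdfdfghsd}.(1) needs. The degenerate possibility $K\setminus P=\emptyset$ is disposed of by Proposition~\ref{asfsdfg}, and otherwise any $\sigma\in K\setminus P$ certainly contains an edge from $\sigma\cap(X\setminus A)$ to $\sigma\cap(Y\setminus A)$, which lies in $K_1\setminus P$ and allows the third hypothesis to be activated.

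Fix such a $\sigma$ together with auxiliary vertices $v_X\in\sigma\cap(X\setminus A)$ and $v_Y\in\sigma\cap(Y\setminus A)$. I will prove the following uniform claim: for every (possibly empty) subset $\mu\subseteq A$ with $\sigma\cup\mu\in K$, the set $\sigma\cup\mu\cup\{a_X,a_Y\}$ is a simplex of $K$. Taking $\mu=\emptyset$ then gives $\{a_X,a_Y\}\in\text{St}(\sigma,A)$, and taking an arbitrary $\mu\in\text{St}(\sigma,A)$ gives $\mu\cup\{a_X,a_Y\}\in\text{St}(\sigma,A)$, which is precisely centrality. Because $K$ is clique, to prove the claim it suffices to check that every two element subset of $\sigma\cup\mu\cup\{a_X,a_Y\}$ is an edge of $K$. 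Pairs inside $\sigma\cup\mu$ are already edges (since $\sigma\cup\mu\in K$), and $\{a_X,a_Y\}$ is an edge after applying the third hypothesis to the edge $\{v_X,v_Y\}\in K_1\setminus P$.

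The remaining pairs have the form $\{a,v\}$ with $a\in\{a_X,a_Y\}$ and $v\in\sigma\cup\mu$, and handling them requires matching the correct auxiliary edge to the correct hypothesis. If $v\in\sigma\cap(X\setminus A)$, then $\{v,v_Y\}$ lies in $K_1\setminus P$ and the third hypothesis produces $\{v,v_Y,a_X,a_Y\}\in K$, so both $\{a_X,v\}$ and $\{a_Y,v\}$ are edges; the case $v\in\sigma\cap(Y\setminus A)$ is symmetric via $\{v,v_X\}$, and since $\sigma\cap A=\emptyset$ these two subcases cover all $v\in\sigma$. If $v\in\mu\subseteq A$, then $\{v,v_X\}$ is an edge of $\sigma\cup\mu$ with exactly one vertex in $A$ and one in $X\setminus A$, so the first hypothesis gives $\{v,v_X,a_X\}\in K$, whence $\{a_X,v\}$ is an edge; the symmetric application of the second hypothesis to $\{v,v_Y\}$ takes care of $\{a_Y,v\}$.

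These checks, combined with the clique property, yield $\sigma\cup\mu\cup\{a_X,a_Y\}\in K$, so the uniform claim is proved and $\{a_X,a_Y\}$ is central in $\text{St}(\sigma,A)$; contractibility via~\ref{adghxdfgjhkut} and Corollary~\ref{sdfdfghsd}.(1) then finish the argument. The main obstacle I anticipate is precisely the bookkeeping in the third paragraph: one must keep track of which side of the cover the vertex $v$ lies on (in $X\setminus A$, in $Y\setminus A$, or in $A$) in order to select the correct auxiliary edge ($\{v,v_X\}$ or $\{v,v_Y\}$) and the matching hypothesis (first, second, or third), since the three hypotheses of the proposition are calibrated to exactly these three cases.
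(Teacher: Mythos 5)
Your proof is correct and follows essentially the same strategy as the paper's: reduce to Corollary~\ref{sdfdfghsd}.(1) by showing that $\{a_X,a_Y\}$ is a central simplex in each obstruction complex $\text{St}(\sigma,A)$, and verify this using the clique property by checking all two-element subsets of $\sigma\cup\mu\cup\{a_X,a_Y\}$ are edges via the three hypotheses. The only cosmetic difference is bookkeeping: you fix auxiliary vertices $v_X,v_Y\in\sigma$ once and vary the third point, whereas the paper picks arbitrary $x\in\sigma\cap X$, $y\in\sigma\cap Y$, $v\in\mu$ and reads off the needed edges from $\{x,v,a_X\}$, $\{y,v,a_Y\}$, $\{x,y,a_X,a_Y\}$; both cover exactly the same pairs.
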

\begin{proof}
Let  $\sigma$ be a simplex in $K\setminus P$.  Any vertex $v$ in $\sigma$ is a vertex of an edge $\tau\subset \sigma$  that belongs to $K_1\setminus P$. According to the  assumption, the sets 
 $\{v,a_X,a_Y\}\subset \tau\cup\{a_X,a_Y\}$ are simplices in $K$. This, together  with the clique assumption on $K$, imply   $\sigma\cup\{a_X,a_Y\}$ is a simplex in $K$. Consequently $\{a_X,a_Y\}$ is a simplex in 
 $\text{\rm St}(\sigma,A)$. 
 
 Let $\mu$ be a simplex in $\text{\rm St}(\sigma,A)$. To prove the proposition, we need to show the set 
 $\mu\cup\{a_A,a_Y\}$ is a simplex in   $F(\sigma,A)$ or equivalently  $\sigma\cup \mu\cup\{a_X,a_Y\}$
 is a simplex in $K$. Let $x$ be an arbitrary element in $\sigma\cap X$, $y$ an   arbitrary element in $\sigma\cap Y$, and $v$ an arbitrary element in $\mu$.
The sets $\{x,v\}$, $\{y,v\}$, and $\{x,y\}$ are simplices in  $K$. Thus according to the assumptions so are
$\{x,v,a_X\}$,  $\{y,v,a_Y\}$, and $\{x,y,a_X,a_Y\}$. Consequently  the two element sets
$\{x,a_X\}$, $\{v,a_X\}$, $\{y,a_X\}$, $\{x,a_Y\}$, $\{v,a_Y\}$, $\{y,a_Y\}$, $\{a_X,a_Y\}$, $\{x,y\}$ are simplices in $K$.
Since  all the two element subsets of  $\sigma\cup\mu\cup\{a_X,a_Y\}$ are of such a form and  $K$ is clique,
 $\sigma\cup\mu\cup\{a_X,a_Y\}$ is a simplex in $K$.
\end{proof}

\section{Vietoris-Rips complexes for distances}
Let $Z$ be a subset of the universe $\mathcal{U}$ (see~\ref{aSFDHFN}). A function $d\colon Z\times  Z\to [0,\infty]$  is called a {\bf distance} if
it is symmetric $d(x,y)=d(y,x)$ and reflexive $d(x,x)=0$ for all  $x$ and  $y$ in $Z$.
A pair $(Z,d)$ is called a  distance space. A distance space $(Z,d)$ is sometimes denoted simply by $Z$,
if $d$ is understood from the context, or  by $d$, if $Z$ is understood from the context.

 Let  $(Z,d)$ be a  distance space. The {\bf diameter} of   a non empty and finite subset $\sigma \subset Z$ is by definition 
  $\text{diam}(\sigma):=\text{max}\{d(x,y)\ |\ x,y\in \sigma\}$. 
 
 A subset $X\subset Z$ together with the distance function given by the restriction of $d$ to $X$ is called
 a subspace of $(Z,d)$.

Let   $(Z,d)$ be a distance space and $r$ be in $[0,\infty)$.
By definition, the {\bf Vietoris-Rips} complex $\text{VR}_r(Z)$
consists of  these non-empty finite subsets $\sigma\subset Z$ for which    $\text{diam}(\sigma)\leq r$ (explicitly: $d(x,y)\leq r$ for all $x$ and $y$ in $\sigma$).  
Vietoris-Rips complexes are examples of clique  complexes (see Section~\ref{dsfgdfhsfghj}).

Let $X$ be a subspace of $(Z,d)$. Then the Vietoris-Rips complex $\text{VR}_r(X)$ coincides with the restriction
$\text{VR}_r(Z)_X$ (see~\ref{asfdfhd}).

Our starting assumption  in this section is:

\begin{point}[\bf Starting input II]\label{gsgfjyuolyupi}
{\em  $(Z,d)$  is a distance space, $X\cup Y=Z$ is a cover of $Z$, and   $A:=X\cap Y$.}
\end{point}

In the rest of this  section we are going to reformulate in terms of the distance $d$ on $Z$ some of the statements given  in the previous sections 
regarding the homotopy properties of the   inclusion $\text{\rm VR}_r(X)\cup \text{\rm VR}_r(Y) \hookrightarrow \text{\rm VR}_r(Z)$
for various $r$ in $[0,\infty)$.
Here is a direct restatement of Proposition~\ref{asfsdhowtreq}:

\begin{prop}\label{asewerwjj}
 Notation as in~\ref{gsgfjyuolyupi}.  Let $r$ be  an element in $[0,\infty)$
 and  $n$ be a natural number.
Assume that one of the following conditions is satisfied:
\begin{enumerate}
\item There is a subset $L\subset A$ such that $\text{\rm VR}_r(L)$ is $n$-connected and, 
for every $x$ in $X\setminus A$ and every $y$ in $Y\setminus A$ with  $d(x,y)\leq r$,
there is an equality $\{v\in A\ |\ d(x,v)\leq r\text{ and } d(y,v)\leq r\}=L$, in particular the set on the left does not depend on  $x$ and $y$.
\item The complex $\text{\rm VR}_r(A)$ is $n$-connected and, 
for  all $x$ in $X\setminus A$,  $y$ in $Y\setminus A$,  and  $v$ in $A$,
if $d(x,y)\leq r$, then both $d(x,v)\leq r$ and $d(v,y)\leq r$.
\end{enumerate}
Then the homotopy fibers of the  inclusion  $\text{\rm VR}_r(X)\cup \text{\rm VR}_r(Y)\subset
 \text{\rm VR}_r(Z)$
are $n$-connected.
\end{prop}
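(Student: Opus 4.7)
The plan is to reduce each of the two listed conditions to the corresponding condition of Proposition~\ref{asfsdhowtreq}, taking advantage of the fact that Vietoris-Rips complexes are clique. Writing $K:=\text{VR}_r(Z)$, one has $K_A=\text{VR}_r(A)$, and an edge $\tau\in K_1\setminus P$ is exactly a pair $\{x,y\}$ with $x\in X\setminus A$, $y\in Y\setminus A$, and $d(x,y)\leq r$. The entire argument is then a translation of the metric hypotheses into conditions on the obstruction complexes $\text{St}(\tau,A)$, followed by a direct appeal to Proposition~\ref{asfsdhowtreq}.

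For condition (1), I would fix such an edge $\tau=\{x,y\}$ and compute $\text{St}(\tau,A)$ explicitly. Its vertex set consists of those $v\in A$ with $d(v,x)\leq r$ and $d(v,y)\leq r$, which by hypothesis equals $L$ independently of the choice of $\tau$. Its edges are pairs $\{v_1,v_2\}\subset A$ for which all six pairwise distances among $v_1,v_2,x,y$ are at most $r$; since $d(x,y)\leq r$ and $v_1,v_2\in L$ already ensures the four distances involving $x$ or $y$, this reduces to $d(v_1,v_2)\leq r$. Hence $\text{St}(\tau,A)$ and $\text{VR}_r(L)$ are two clique complexes (the first by Proposition~\ref{asfgsdfhjghkhjgl}.(1)) with the same vertices and edges, and therefore coincide. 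Thus $\text{St}(\tau,A)=\text{VR}_r(L)$ is an $n$-connected complex independent of $\tau\in K_1\setminus P$, which is condition~(1) of Proposition~\ref{asfsdhowtreq}.

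For condition (2), $K_A=\text{VR}_r(A)$ is $n$-connected by assumption. Given any edge $\tau=\{x,y\}$ in $K_1\setminus P$ and any $v\in A$, the hypothesis gives $d(x,v)\leq r$ and $d(v,y)\leq r$, and combined with $d(x,y)\leq r$ this means $\tau\cup\{v\}$ has diameter at most $r$ and hence lies in $\text{VR}_r(Z)$. This is precisely condition~(2) of Proposition~\ref{asfsdhowtreq}, completing the reduction. The only non-routine step is the clique-based identification $\text{St}(\tau,A)=\text{VR}_r(L)$ in case (1), but since both complexes are determined by their 1-skeleta this poses no real obstacle.
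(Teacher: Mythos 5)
Your proof is correct and takes exactly the approach the paper intends: the paper introduces Proposition~\ref{asewerwjj} as ``a direct restatement of Proposition~\ref{asfsdhowtreq}'' without spelling out the verification, and your argument fills in precisely that translation — identifying edges in $K_1\setminus P$ with pairs $(x,y)$ at distance $\leq r$, computing $\text{St}(\tau,A)=\text{VR}_r(L)$ via the clique property for case (1), and checking the simplex condition $\tau\cup\{v\}\in K$ for case (2).
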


The assumption (2)  of Proposition~\ref{asewerwjj}  can be restated as: (connectivity condition) $\text{\rm VR}_r(A)$ is $n$-connected, and (intersection  condition)  if $\text{VR}_r(Z)_1\setminus P$ (see~\ref{asdgfssdtjhgkj}) is non-empty, then
\[A=\bigcap_{\sigma\in \text{VR}_r(Z)_1\setminus P} \text{\rm St}(\sigma,A)_0\] 
What if   the intersection above does not contain all   the points of $A$
(the intersection condition is not satisfied)?
For example consider $Z=\{x,a_1,a_2,a_3,a_4,y\}$ with the distance function depicted by the following diagram, where the dotted lines indicate distance 2 and the continuous lines indicate distance 1:
 \[\begin{tikzcd}
  & a_1\ar[dash]{r} \ar[dash]{dd}\ar[dash, dotted]{ddr}& a_2\ar[dash]{dd}\ar[dash, dotted]{ddl}
 \\
 x \ar[dash]{ur} \ar[dash]{dr} \ar[dash, dotted, bend left=60pt]{urr}
 \ar[dash]{rrr}
 \ar[dash,  bend right=60pt]{drr}& & & y \ar[dash,  bend right=60pt]{ull}
  \ar[dash, dotted]{ul}\ar[dash]{dl} \ar[dash,   bend left=60pt]{dll}
 \\
 & a_3\ar[dash]{r} & a_4
  \end{tikzcd}\]
Let $X=\{x,a_1,a_2,a_3,a_4\}$ and $Y=\{a_1,a_2,a_3,a_4,y\}$. Choose $r=1$.
In this case  $\text{VR}_1(Z)_1\setminus P$ consists of only one  edge $\{x,y\}$ and
$\text{\rm St}(\{x,y\},A)=\Delta[\{a_1,a_3\}]\cup\Delta[\{a_3,a_4\}]$. Thus the condition (2) of Proposition~\ref{asewerwjj}  
is not satisfied.  However, since the complex $\text{\rm St}(\{x,y\},A)$ is contractible,  according to Corollary~\ref{sdfdfghsd}.(1),
the inclusion $\text{\rm VR}_1(X)\cup \text{\rm VR}_1(Y)\subset
 \text{\rm VR}_1(Z)$ is a weak equivalence.

 The assumption (1)   of Proposition~\ref{asewerwjj}   can be restated as: 
 (connectivity condition)  for every $\tau$ in $\text{VR}_r(Z)_1\setminus P$, the complex  $\text{\rm St}(\tau,A)$ is $n$-connected, and
 (independence condition) for all  pairs of edges $\tau_1$ and $\tau_2$ in $\text{VR}_r(Z)_1\setminus P$,
 there is an equality
 $\text{\rm St}(\tau_1,A)=\text{\rm St}(\tau_2,A)$.  What if the independence condition is not satisfied? For example  consider a distance space   $Z=\{x_1,x_2,a_1,a_2,a_3,a_4,y\}$  
 with the distance function depicted by the following diagram, where the dotted lines indicate distance 2 and the continuous lines indicate distance 1:
 \[\begin{tikzcd}
 x_1 \ar[dash]{r} \ar[dash,  bend left=40pt]{rr}
 \ar[dash, bend right=30pt]{dd}
 \ar[dash,dotted]{ddr} \ar[dash]{rrdd}\ar[dash]{rrrd}&
  a_1\ar[dash]{r} \ar[dash]{dd}\ar[dash,dotted]{ddr}& 
  a_2
  \ar[dash]{dd}\ar[dash,dotted]{ddl}
 \\
 & & & y
 \ar[dash]{ul}
 \ar[dash]{dl}
 \ar[dash]{ulll}
 \ar[dash,  bend right=60pt]{ull}
 \ar[dash, dotted, bend left=60pt]{dll}
 \\
 x_2 \ar[dash,dotted]{uur}
 \ar[dash]{r}
  \ar[dash,   bend right=40pt]{rr}
  \ar[dash]{rruu}
  \ar[dash]{rrru}
 & a_3 
 \ar[dash]{r}
 & a_4
 \end{tikzcd}\]
 Let $X=\{x_1,x_2,a_1,a_2,a_3,a_4\}$ and $Y=\{a_1,a_2,a_3,a_4,y\}$. Choose $r=1$.
 In this case  $\text{VR}_1(Z)_1\setminus P$ consists of two edges $\{x_1,y\}$  and $\{x_2,y\}$.
 Note that $\text{\rm St}(\{x_1,y\},A)=\Delta[\{a_1,a_2\}]\cup \Delta[\{a_2,a_4\}]$ and $\text{\rm St}(\{x_2,y\},A)=\Delta[\{a_2,a_4\}]$. Thus the independence condition does not hold in this case. However,  since the obstruction complexes $\text{\rm St}(\{x_1,y\},A)$, $\text{\rm St}(\{x_2,y\},A)$  and $\text{\rm St}(\{x_1,x_2,y\},A)= \text{\rm St}(\{x_1,y\},A)\cap \text{\rm St}(\{x_2,y\},A)=\Delta[\{a_2,a_4\}]$ are contractible, 
 the inclusion $\text{\rm VR}_1(X)\cup \text{\rm VR}_1(Y)\subset
 \text{\rm VR}_1(Z)$ is a weak equivalence by Corollary~\ref{sdfdfghsd}.

 Consider  a relaxation of the intersection condition in the assumption (2) of Proposition~\ref{asewerwjj}.
\begin{point}[\bf  Assumption I]\label{adgfsdfhsdg} {\em Notation as in~\ref{gsgfjyuolyupi}. 
Let $r$ be  an element in $[0,\infty)$.
 There exists an element $v$ in $A$ satisfying the following property. 
 For all  $x$ in $X\setminus A$ and $y$ in $Y\setminus A$, if   $d(x,y)\leq r$, then  
  $d(x,v)\leq r$ and $d(y,v)\leq r$.}
 \end{point}
 
 \begin{point}\label{basfasfgdqweter}
 The  assumption~\ref{adgfsdfhsdg} is   equivalent to non-emptiness of the following intersection, where
 $n\geq 0$ and  the first equality is a consequence of Vietoris-Rips complexes being clique (see Proposition~\ref{asfgsdfhjghkhjgl}):
\[\bigcap_{\sigma\in (\text{sk}_{n+1}\text{VR}_r(Z))\setminus P} \text{\rm St}(\sigma,A)=\bigcap_{\sigma\in  \text{VR}_r(Z)_1\setminus P} \text{\rm St}(\sigma,A)\not=\emptyset\]
According to Proposition~\ref{adgsfgjkk}, Assumption~\ref{adgfsdfhsdg} implies connectedness of the 
 homotopy fibers of $\text{\rm VR}_r(X)\cup \text{\rm VR}_r(Y)\subset
 \text{\rm VR}_r(Z)$.  This  assumption however does not imply $n$-connectedness of these homotopy fibers. For example  consider $Z=\{x,a,b,y\}$ with the distance function depicted by the following diagram  where the dotted line indicates distance 2 and the continuous lines indicate distance 1:
 \[\begin{tikzcd}
 & a\ar[dash]{dl}\ar[dash]{dr}\ar[dash,dotted]{dd}\\
 x \ar[dash]{dr}\ar[dash]{rr}& & y\ar[dash]{dl}\\
 & b
 \end{tikzcd}\]
 Let $X=\{x,a,b\}$ and $Y=\{a,b,y\}$. Then $\text{\rm VR}_1(Z)$ is contractible but $\text{\rm VR}_1(X)\cup \text{\rm VR}_1(Y)$ 
 has the homotopy type of a circle.  Thus 
 in this case the homotopy fiber of the inclusion $\text{\rm VR}_r(X)\cup \text{\rm VR}_r(Y)\subset
 \text{\rm VR}_r(Z)$ is not $1$-connected.
 Note further that  the complex  $\text{St}(\{x,y\},A)$ consists of two vertices $a$ and $b$ with no edges. 
 \end{point}

To assure   $\text{\rm VR}_r(X)\cup \text{\rm VR}_r(Y) \hookrightarrow \text{\rm VR}_r(Z)$ is a weak equivalence  assumption~\ref{adgfsdfhsdg} is not enough and we need additional 
requirements.  For example the following is an analogue of Corollary~\ref{sasfdsdfhfdhjrtyj}.

\begin{prop}\label{afadfhhgj}
 Notation as in~\ref{gsgfjyuolyupi}. Assume~\ref{adgfsdfhsdg}. Let $v$ be an element in $A$ given by this assumption. In addition assume that one of the following conditions is satisfied:
\begin{enumerate}
\item For every $x$ in $X\setminus A$ and $y$ in $Y\setminus A$ such that $d(x,y)\leq r$, if $w$ in $A$ satisfies  $d(w,x)\leq r$ and $d(w,y)\leq r$, then $d(v,w)\leq r$.
\item {\em $\text{\rm diam}(A)\leq r$.}
\item {\em $A=\{v\}$.}
\end{enumerate}
Then the     inclusion $\text{\rm VR}_r(X)\cup \text{\rm VR}_r(Y) \hookrightarrow \text{\rm VR}_r(Z)$ is a weak equivalence. 
\end{prop}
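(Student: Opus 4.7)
My plan is to deduce this proposition from Proposition~\ref{aSDFGDSFHFHGJKUI}.(1) applied to the clique complex $K=\text{VR}_r(Z)$ with the cover $X\cup Y=Z$. That result requires exhibiting a vertex $v$ in the intersection $\bigcap_{\tau\in K_1\setminus P}\text{St}(\tau,A)$ together with the condition that, for every edge $\tau\in K_1\setminus P$ and every vertex $w\in\text{St}(\tau,A)$, the pair $\{v,w\}$ is an edge of $K$, i.e., $d(v,w)\le r$. The element $v$ guaranteed by Assumption~\ref{adgfsdfhsdg} will serve as the entry point.

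First I would verify the intersection condition. An edge $\tau=\{x,y\}$ of $K$ lies in $K_1\setminus P$ exactly when $x\in X\setminus A$, $y\in Y\setminus A$, and $d(x,y)\le r$. Assumption~\ref{adgfsdfhsdg} gives $d(v,x)\le r$ and $d(v,y)\le r$, so $\{v\}\cup\tau\in K$, which is precisely the statement that $v$ is a vertex of $\text{St}(\tau,A)$. Hence $v\in\bigcap_{\tau\in K_1\setminus P}\text{St}(\tau,A)$, as also recorded in~\ref{basfasfgdqweter}.

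Next I would verify, case by case, that $\{v,w\}$ is an edge of $K$ whenever $\tau=\{x,y\}\in K_1\setminus P$ and $w$ is a vertex of $\text{St}(\tau,A)$; such a $w$ lies in $A$ and satisfies $d(w,x)\le r$ and $d(w,y)\le r$. Under hypothesis (1), this is exactly the assumed inequality $d(v,w)\le r$. Under hypothesis (2), both $v$ and $w$ lie in $A$, so $d(v,w)\le\text{diam}(A)\le r$. Under hypothesis (3), $A=\{v\}$ forces $w=v$, and $\{v,w\}=\{v\}$ is tautologically a simplex of $K$. In each case condition (1) of Proposition~\ref{aSDFGDSFHFHGJKUI} is satisfied, and that proposition yields the weak equivalence $\text{VR}_r(X)\cup\text{VR}_r(Y)\hookrightarrow\text{VR}_r(Z)$.

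The work is almost entirely bookkeeping; the only place where anything could go wrong is in translating ``vertex of $\text{St}(\tau,A)$'' into the metric conditions $w\in A$ and $d(w,x),d(w,y)\le r$, so I would make that identification explicit at the start and then reuse it in all three cases. No genuine obstacle is expected, since the real content is already packaged in Proposition~\ref{aSDFGDSFHFHGJKUI} and in the observation of~\ref{basfasfgdqweter}.
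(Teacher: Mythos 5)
Your proof is correct and follows essentially the same route as the paper: both reduce the statement to Proposition~\ref{aSDFGDSFHFHGJKUI}.(1) applied to $K=\text{VR}_r(Z)$, using the entry point $v$ supplied by Assumption~\ref{adgfsdfhsdg}. The only (cosmetic) difference is that you verify the hypothesis of Proposition~\ref{aSDFGDSFHFHGJKUI}.(1) separately for each of conditions (1), (2), (3), whereas the paper observes the chain of implications $(3)\Rightarrow(2)\Rightarrow(1)$ and then checks only condition (1).
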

\begin{proof}
If Assumption~\ref{adgfsdfhsdg} and the condition 1 hold, then so does  the assumption 1 of Proposition~\ref{aSDFGDSFHFHGJKUI}.
Furthermore  the condition 3 implies 2 and the condition  2 implies 1. Thus this proposition is  a consequence of Proposition~\ref{adfdfhgf}.
\end{proof}



The intersection condition of the assumption (2) in Proposition~\ref{asewerwjj}
requires a choice of a parameter $r$.  The following is its  universal version  where no parameter is required:

\begin{point}[\bf  Assumption II]\label{afdsghdfhkte} {\em Notation as in~\ref{gsgfjyuolyupi}. 
 The set $A$ is non empty and  for every    $x$   in $X\setminus A$,  $y$    in $Y\setminus A$,  and $v$   in $A$, the following inequalities hold $d(x,y)\ge d(x,v)$ and $d(x,y)\ge d(y,v)$.}
\end{point}

Assumption~\ref{afdsghdfhkte} has an intuitive interpretation in terms of angles when 
$Z$ is a subspace of the Euclidean space. In such a setting this condition means that 
every triangle  $xvy$ with vertices $x$ in $X\setminus A$, $y$ in $Y\setminus A$ and $v$ in $A$, 
the angle  at $v$ must be at least $60^{\circ}$. We therefore refer to this assumption as the  $60^{\circ}$
angle condition.


\begin{prop}\label{asfgsdfgjtoyui}
 Notation as in~\ref{gsgfjyuolyupi}. Assume~\ref{afdsghdfhkte}. Assume in addition that,
  for every    $x$   in $X\setminus A$ and  $y$    in $Y\setminus A$, the following inequality holds 
$d(x, y)\geq \text{\rm diam}(A)$. Then $\text{\rm VR}_r(X)\cup \text{\rm VR}_r(Y) \hookrightarrow \text{\rm VR}_r(Z)$ is a weak equivalence for all $r$ in $[0,\infty)$.
\end{prop}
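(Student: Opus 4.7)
The plan is to invoke Corollary~\ref{sdfdfghsd}.(1) by showing that, under the hypotheses, the obstruction complex $\text{St}(\sigma, A)$ equals $\Delta[A]$ (and is therefore contractible) for every $\sigma$ in $K\setminus P$, where $K = \text{VR}_r(Z)$. Since $A$ is non-empty by Assumption~\ref{afdsghdfhkte}, this suffices.

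First I would fix $r$ in $[0,\infty)$ and $\sigma$ in $K\setminus P$. By definition of $K\setminus P$, the set $\sigma\cap A$ is empty, while $\sigma\cap X$ and $\sigma\cap Y$ are non-empty; in particular $\sigma\cap(X\setminus A)$ and $\sigma\cap(Y\setminus A)$ are both non-empty. Pick witnesses $x_0\in\sigma\cap(X\setminus A)$ and $y_0\in\sigma\cap(Y\setminus A)$. Since $\sigma\in\text{VR}_r(Z)$, one has $d(x_0,y_0)\leq r$, and the additional hypothesis of the proposition gives $\text{diam}(A)\leq d(x_0,y_0)\leq r$. Hence every non-empty finite subset of $A$ is a simplex in $\text{VR}_r(Z)$, i.e.\ $\Delta[A]\subset \text{VR}_r(Z)_A$.

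The core step is to verify, for any non-empty finite $\mu\subset A$, that $\mu\cup\sigma$ is a simplex in $K$. The pairwise distances inside $\sigma$ are bounded by $r$ because $\sigma\in K$; the pairwise distances inside $\mu$ are bounded by $r$ by the previous paragraph. For a cross distance $d(z,v)$ with $z\in\sigma$ and $v\in\mu\subset A$, I split on which side of the cover $z$ lives. If $z\in\sigma\cap(X\setminus A)$, apply Assumption~\ref{afdsghdfhkte} to the triple $(z,y_0,v)$ to obtain $d(z,v)\leq d(z,y_0)\leq r$. If $z\in\sigma\cap(Y\setminus A)$, apply Assumption~\ref{afdsghdfhkte} to the triple $(x_0,z,v)$ to obtain $d(z,v)\leq d(x_0,z)\leq r$. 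This exhausts all cases because $\sigma\cap A=\emptyset$.

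Consequently $\text{St}(\sigma,A)$ contains every non-empty finite subset of $A$, so $\text{St}(\sigma,A)=\Delta[A]$, which is contractible by~\ref{afsaasfagdf} (or directly from~\ref{adghxdfgjhkut}). Corollary~\ref{sdfdfghsd}.(1) then yields that the inclusion $\text{VR}_r(X)\cup\text{VR}_r(Y)\hookrightarrow \text{VR}_r(Z)$ is a weak equivalence. There is no real obstacle here; the only subtlety is selecting witnesses $x_0,y_0$ of opposite sides in $\sigma$ and using them to bound cross distances to arbitrary $v\in A$ via the $60^\circ$ angle condition, but this is automatic from $\sigma\in K\setminus P$.
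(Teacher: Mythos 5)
Your proof is correct, and it takes a more direct route than the paper's. The paper splits into two cases: if $r\ge\text{diam}(A)$ it invokes Proposition~\ref{afadfhhgj}.(2) (via the chain of ``entry point'' corollaries), and if $r<\text{diam}(A)$ it observes that the extra hypothesis $d(x,y)\ge\text{diam}(A)$ forces $\text{VR}_r(Z)=P$, so Proposition~\ref{asfsdfg} applies. You instead give a uniform argument: for any $\sigma$ in $K\setminus P$, you extract witnesses $x_0\in\sigma\cap(X\setminus A)$, $y_0\in\sigma\cap(Y\setminus A)$, use the extra hypothesis to get $\text{diam}(A)\le d(x_0,y_0)\le r$, and then use the $60^\circ$ angle condition with the appropriate opposite-side witness to bound all cross distances $d(z,v)$ for $z\in\sigma$, $v\in A$. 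This shows $\text{St}(\sigma,A)=\Delta[A]$, which is contractible because $A$ is non-empty, and Corollary~\ref{sdfdfghsd}.(1) finishes. What your approach buys is self-containment and the stronger explicit statement that every obstruction complex over $K\setminus P$ is the full simplex on $A$; what the paper's approach buys is brevity by reusing results already proved, and it isolates the observation that small $r$ makes $K=P$, which is conceptually separate from the contractibility mechanism. (One small remark: in your final paragraph you should also note the degenerate case $K\setminus P=\emptyset$, where the hypothesis of Corollary~\ref{sdfdfghsd}.(1) is vacuous and the conclusion still holds; your argument is fine as written but it is worth making this explicit.)
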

\begin{proof}
We already know that the proposition holds if  $r\geq \text{diam}(A)$. Assume  $r< \text{diam}(A)$.
We claim  that in this case $\text{\rm VR}_r(Z)=P$ (see~\ref{asdgfssdtjhgkj}). If not, there are $x$ in $X\setminus A$ and $y$ in $Y\setminus A$ such that $d(x,y)\leq r$.  The  assumption   would then lead to the following contradictory inequalities
$r\geq d(x,y)\geq \text{diam}(A)>r$.
Thus in this case   $\text{\rm VR}_r(Z)=P$ and the proposition follows from Proposition~\ref{asfsdfg}.
\end{proof}

\section{Metric gluings}
A distance $d$ on $Z$ is called a  {\bf pseudometric} if it satisfies the triangle inequality: $d(x,z)\leq d(x,y)+d(y,z)$
 for all $x,y,z$ in $Z$.


\begin{point} \label{sdasfADSFDHGSJ}
Notation as in~\ref{gsgfjyuolyupi}.  Assume that the distance $d$ on $Z$ is a pseudometric.
 Let  $x$ be  in $X\setminus A$ and $y$ be in  $Y\setminus A$.  For all $a$ in $A$, by the triangular inequality, $d(x,y)\leq d(x,a)+d(a,y)$, and hence:
\[d(x,y)\leq \text{inf}\{d(x,a)+d(a,y)\ |\ a\in A\}\]
The pseudometric space $(Z,d)$ is called {\bf metric gluing} if the above inequality is an equality for
all $x$   in $X\setminus A$ and $y$  in  $Y\setminus A$. 

If $A$ is finite, then the pseudometric $(Z,d)$ is a metric gluing if and only if,  for every $x$ in $X\setminus A$ and $y$  in  $Y\setminus A$, there is
$a$ in $A$ such that $d(x,y)=d(x,a)+d(a,y)$.

If $d_X$ is a pseudometric on $X$ and $d_Y$ is a pseudometric on $Y$ such that $d_X(a,b)=d_Y(a,b)$ for all $a$ and $b$ in $A$, then the following function defines a pseudometric on $Z$ which is a metric gluing:
\[d_Z(z,z')=
\begin{cases} 
      d_X(z,z') & \text{ if } z, z' \in X  \\
      d_Y(z,z') & \text{ if } z, z' \in Y  \\
      \inf \{d(z,a)+d(z',a)\ |\  a\in A\} & \text{ if } z\in X\setminus A \text{ and } z'\in Y\setminus A
   \end{cases}
\]
\end{point}

If    $(Z,d)$ is a  metric gluing and $A$ is finite, then, for any edge $\sigma=\{x,y\}$ in $\text{VR}_r(Z)\setminus P$, there is $a$ in $A$ such that $r\geq d(x,y)=d(x,a)+d(a,y)$. Thus in this case  the obstruction complex $\text{\rm St}(\tau,A)$ is non-empty as it contains the vertex $a$.  To assure  contractibility of $\text{\rm St}(\tau,A)$ we need additional assumptions, for example:

\begin{point}[\bf  Simplex assumption] \label{SADSDGHDSG}
Notation as in~\ref{gsgfjyuolyupi}   and~\ref{asdgfssdtjhgkj}.
Let $r$ be in $[0,\infty)$. For any vertex $v$ in an edge $\sigma$ in $\text{VR}_r(Z)\setminus P$,
if $a$ and $b$ are elements in $A$ such that $d(a,v)\leq r$ and $d(v,b)\leq r$, then $d(a,b)\leq r$.
\end{point}
The simplex assumption  can be reformulated as follows: for any vertex $v$ in a simplex  $\sigma$ in $\text{VR}_r(Z)\setminus P$, the complex $\text{\rm St}(v,A)$ is a standard simplex (see~\ref{aSFDHFN}). Since the intersection of standard simplices is again a standard simplex, under  Assumption~\ref{SADSDGHDSG}, 
 an obstruction complex $\text{\rm St}(\sigma,A)$, for an arbitrary simplex $\sigma$ in  $\text{VR}_r(Z)\setminus P$, is contractible if and only if it is  non empty.   This, together with the discussion at the end of~\ref{sdasfADSFDHGSJ}  and Corollary~\ref{sfsgdhgjn} gives:
 
\begin{prop}\label{sdasDSFAGHF}
Notation as in~\ref{gsgfjyuolyupi}    and~\ref{asdgfssdtjhgkj}.  Let  $r$  be in $[0,\infty)$. Assume $A$ is finite and $(Z,d)$ is a   metric gluing that satisfies the simplex assumption~\ref{SADSDGHDSG}. Then, for any edge $\sigma$ in  $\text{VR}_r(Z)\setminus P$, the obstruction complex $\text{\rm St}(\sigma, A)$ is contractible. The homotopy fibers of 
$\text{\rm VR}_r(X)\cup \text{\rm VR}_r(Y) \hookrightarrow \text{\rm VR}_r(Z)$  are connected and this map
induces an isomorphism on $\pi_0$ and a surjection on $\pi_1$.
\end{prop}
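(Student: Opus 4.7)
The plan is to apply Corollary~\ref{sfsgdhgjn} with $n=0$. Since $\text{\rm VR}_r(Z)$ is clique, and every vertex of $\text{\rm VR}_r(Z)$ lies in $P$ (as each vertex belongs to $X$ or $Y$), what remains to verify is that, for every edge $\sigma=\{x,y\}$ in $\text{\rm VR}_r(Z)_1\setminus P$, the obstruction complex $\text{\rm St}(\sigma,A)$ is both a standard simplex and non-empty. Contractibility of $\text{\rm St}(\sigma,A)$ will then follow because a non-empty standard simplex has every vertex as a central simplex (see~\ref{adghxdfgjhkut}).

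For non-emptiness, note that $\sigma\in\text{\rm VR}_r(Z)_1\setminus P$ means precisely $x\in X\setminus A$, $y\in Y\setminus A$, and $d(x,y)\leq r$. The metric gluing hypothesis together with finiteness of $A$ guarantees that the infimum $\inf\{d(x,a)+d(a,y)\mid a\in A\}$ is attained at some $a\in A$, so $d(x,a)+d(a,y)=d(x,y)\leq r$. Hence $d(x,a)\leq r$ and $d(a,y)\leq r$, meaning $\{x,y,a\}\in\text{\rm VR}_r(Z)$ and $a$ is a vertex of $\text{\rm St}(\sigma,A)$.

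For the standard simplex property I would apply the simplex assumption~\ref{SADSDGHDSG} at each vertex $v$ of $\sigma$: any two elements $a,b$ of $\text{\rm St}(\{v\},A)_0$ satisfy $d(a,b)\leq r$, so every pair in $\text{\rm St}(\{v\},A)_0$ is an edge of $\text{\rm VR}_r(Z)$, and the clique property forces $\text{\rm St}(\{v\},A)=\Delta[\text{\rm St}(\{v\},A)_0]$. Proposition~\ref{asfgsdfhjghkhjgl}(4) then yields $\text{\rm St}(\sigma,A)=\text{\rm St}(\{x\},A)\cap\text{\rm St}(\{y\},A)$, and intersections of standard simplices are again standard simplices (see~\ref{asfdfhd}). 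Combined with the non-emptiness established above, this produces the asserted contractibility. Corollary~\ref{sfsgdhgjn} applied with $n=0$ then delivers $0$-connected homotopy fibers of $\text{\rm VR}_r(X)\cup\text{\rm VR}_r(Y)\hookrightarrow\text{\rm VR}_r(Z)$, from which the isomorphism on $\pi_0$ and the surjection on $\pi_1$ follow via the long exact sequence of homotopy groups of a fibration.

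The only genuine subtlety is the use of finiteness of $A$ to upgrade the infimum in the metric gluing condition to an attained minimum; without finiteness one would obtain $\text{\rm St}(\sigma,A)\neq\emptyset$ only under the strict inequality $d(x,y)<r$, leaving the boundary case $d(x,y)=r$ to be handled by a separate limit argument. Everything else is a direct assembly of the clique machinery from Section~\ref{dsfgdfhsfghj} and the homotopy-fibers criterion from Section~\ref{dec}.
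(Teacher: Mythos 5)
Your proposal is correct and follows the same route the paper takes: the paper's argument (spread across the discussion preceding the statement) also reformulates the simplex assumption to say that each $\text{\rm St}(\{v\},A)$ is a standard simplex, invokes Proposition~\ref{asfgsdfhjghkhjgl}(4) and closure of standard simplices under intersection to reduce contractibility of $\text{\rm St}(\sigma,A)$ to non-emptiness, uses finiteness of $A$ together with the metric gluing condition to produce a witness $a\in A$ with $d(x,a)+d(a,y)=d(x,y)\leq r$, and concludes via Corollary~\ref{sfsgdhgjn} with $n=0$. Your closing remark about finiteness being needed precisely for the boundary case $d(x,y)=r$ is an accurate observation that the paper leaves implicit.
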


The assumptions of Proposition~\ref{sdasDSFAGHF}  are not enough to guarantee the non-emptiness of the obstruction complexes 
for simplices   in $\text{VR}_r(Z)\setminus P$  of dimension 2 and higher. 
 For example consider $Z=\{x_1,x_2,a_1,a_1,y\}$ with the distance function depicted by the following diagram, where the dotted lines indicate distance 4, the dashed lines indicate distance 3, the squiggly lines indicate distance 2 and the continuous lines indicate distance $1$:
 \[\begin{tikzcd}[labels=description]
 x_1\ar[dash]{r}{1}\ar[dash, dotted]{ddr}{1}\ar[dash,dashed]{dd}{3}\ar[dash,dashed,  bend left=60pt]{drr}{3}   & a_1\ar[dash, dashed]{dd}{3}
 \arrow[dash,squiggly]{dr}{2}\\
 & & y\\
 x_2\ar[dash, dotted]{ruu}{4} \ar[dash]{r}{1}\ar[dash, dashed,  bend right=60pt]{urr}{3} & a_2\ar[dash,squiggly]{ur}{2}\\
  \end{tikzcd}\]
  Let $r=3$, $X=\{x_1,x_2,a_1,a_2\}$, $Y=\{y,a_1,a_2\}$, and $A=\{a_1,a_2\}$.  Then  $Z$ is a metric gluing. Note that  $\text{\rm St}(y,A)=\Delta[\{a_1,a_2\}]$, $\text{\rm St}(x_1,A)=\text{\rm St}(\{x_1,y\},A)=\Delta[\{a_1\}]$,
  $\text{\rm St}(x_2,A)=\text{\rm St}(\{x_2,y\},A)=\Delta[\{a_2\}]$, and    $\text{\rm St}(\{x_1,x_2,y\},A)$ is empty.  Furthermore
  $A$ and $Y$ have diameter not exceeding  $3$. Consequently  
$\text{VR}_3(Y)$  and $\text{VR}_3(A)$ are contractible.  The complex $\text{VR}_3(X)$ has the homotopy type of the circle  $S^1$ and so does   $ \text{VR}_3(X)\cup \text{VR}_3(Y)$.  The entire complex $\text{VR}_3(Z)$ is however contractible. The inclusion $ \text{VR}_3(X)\cup \text{VR}_3(Y)\subset \text{VR}_3(Z)$ induces therefore a surjection on $\pi_1$ but  not an isomorphism. This example should be compared with
Proposition~\ref{afadfhhgj} under the condition 2.

To assure isomorphism on $\pi_1$, the simplex assumption~\ref{SADSDGHDSG} should be strengthened.

\begin{point}[\bf Strong  simplex assumption]\label{sasdfgsdfhgjd}
Notation as in~\ref{gsgfjyuolyupi}    and~\ref{asdgfssdtjhgkj}.
Let $r$ be in $[0,\infty)$. For any vertex $v$ in an edge $\sigma$ in $\text{VR}_r(Z)\setminus P$,
if $a$ and $b$ are elements in $A$ such that $d(a,v)\leq r$ and $d(v,b)\leq r$, then $2d(a,b)\leq d(a,v)+d(v,b)$.
\end{point}

Note that the strong  simplex assumption~\ref{sasdfgsdfhgjd} implies the  simplex assumption~\ref{SADSDGHDSG}. 
\begin{thm}\label{easdfgsdghj}
Notation as in~\ref{gsgfjyuolyupi}    and~\ref{asdgfssdtjhgkj}.  Let  $r$  be in $[0,\infty)$. Assume $A$ is finite and $(Z,d)$ is a   metric gluing that satisfies the strong simplex assumption~\ref{sasdfgsdfhgjd}.  Then,  for any simplex $\sigma$  in  $\text{\rm VR}_r(Z)\setminus P$
such that  either $|\sigma\cap X|=1$ or $|\sigma\cap Y|=1$, the obstruction complex  $\text{\rm St}(\sigma,A)$ is   contractible. The homotopy fibers of the inclusion $\text{\rm VR}_r(X)\cup \text{\rm VR}_r(Y) \hookrightarrow \text{\rm VR}_r(Z)$  are simply connected and this map
induces an isomorphism on $\pi_0$ and $\pi_1$  and a surjection on $\pi_2$.
\end{thm}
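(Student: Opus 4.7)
The plan is to reduce the theorem to two separate tasks: first, prove contractibility of $\text{St}(\sigma,A)$ for the specified simplices $\sigma$, and second, feed this into Proposition~\ref{sfsfdgfdhfg} with $n=1$ to obtain the stated homotopy-fiber conclusion. The crucial observation for the second step is that every $\sigma \in (\text{sk}_2\text{VR}_r(Z))\setminus P$ has at most three elements, and since $\sigma\cap A=\emptyset$ with both $\sigma\cap X$ and $\sigma\cap Y$ non-empty, at least one of $|\sigma\cap X|$ and $|\sigma\cap Y|$ equals $1$. Thus the contractibility statement, once proved, covers all simplices in $(\text{sk}_2\text{VR}_r(Z))\setminus P$, and Proposition~\ref{sfsfdgfdhfg} with $n=1$ yields $1$-connected homotopy fibers, giving the $\pi_\ast$ statement.

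For contractibility, since $\text{VR}_r(Z)$ is a clique complex, Proposition~\ref{asfgsdfhjghkhjgl}(4) gives $\text{St}(\sigma,A)=\bigcap_{v\in\sigma}\text{St}(v,A)$. The strong simplex assumption~\ref{sasdfgsdfhgjd} implies the weak simplex assumption~\ref{SADSDGHDSG}, so as observed in the paragraph preceding Proposition~\ref{sdasDSFAGHF}, each $\text{St}(v,A)$ for $v$ a vertex of some edge in $\text{VR}_r(Z)\setminus P$ is the standard simplex $\Delta[N_r(v)]$, where $N_r(v):=\{a\in A\mid d(a,v)\leq r\}$. Every vertex of $\sigma$ lies in such an edge (pairing an element of $\sigma\cap(X\setminus A)$ with one in $\sigma\cap(Y\setminus A)$). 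Since intersections of standard simplices are standard simplices (see~\ref{asfdfhd}), $\text{St}(\sigma,A)=\Delta\!\left[\bigcap_{v\in\sigma}N_r(v)\right]$ is contractible if and only if the vertex set $\bigcap_{v\in\sigma}N_r(v)$ is non-empty.

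The main obstacle is precisely this non-emptiness. By symmetry in $X$ and $Y$, assume $|\sigma\cap Y|=1$, say $\sigma=\{x_1,\dots,x_k,y\}$ with $x_i\in X\setminus A$ and $y\in Y\setminus A$. Since $(Z,d)$ is a metric gluing and $A$ is finite, for each $i$ one can choose $a_i\in A$ realising $d(x_i,y)=d(x_i,a_i)+d(a_i,y)$; in particular $d(a_i,y)\leq d(x_i,y)\leq r$. Pick $i^\ast$ minimising $d(y,a_i)$ and set $a:=a_{i^\ast}\in N_r(y)$. The claim that $a\in N_r(x_i)$ for every $i$ is the heart of the argument and uses the strong simplex assumption applied at the vertex $y$ of the edge $\{x_i,y\}\in\text{VR}_r(Z)\setminus P$: since both $a_i$ and $a$ are in $N_r(y)$, we have $2d(a_i,a)\leq d(a_i,y)+d(y,a)$, and then
\[
d(x_i,a)\leq d(x_i,a_i)+d(a_i,a)\leq d(x_i,a_i)+\tfrac{1}{2}\bigl(d(a_i,y)+d(y,a)\bigr)=d(x_i,y)+\tfrac{1}{2}\bigl(d(y,a)-d(a_i,y)\bigr)\leq r,
\]
where the last inequality uses the minimality of $d(y,a)=d(y,a_{i^\ast})$ among the $d(y,a_i)$. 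Thus $a\in\bigcap_{v\in\sigma}N_r(v)$, so $\text{St}(\sigma,A)$ is a non-empty standard simplex and hence contractible. Combined with Proposition~\ref{sfsfdgfdhfg} as described, this completes the proof.
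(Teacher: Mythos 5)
Your proof is correct and, in the key step (non-emptiness of $\text{St}(\sigma,A)$), takes a genuinely different and more direct route than the paper's. The paper proves the same non-emptiness by induction on the dimension of $\sigma$: the inductive hypothesis produces, for each $j$, a candidate $a_{s(j)}$ that works for $\sigma\setminus\{x_j\}$; if none of these works for $\sigma$ itself, then $s$ is forced to be a fixed-point-free permutation of $\{1,\ldots,n\}$, and summing the resulting chain of inequalities over all indices (using that the permutation makes $\sum_i d(y,a_{s(i)})$ coincide with $\sum_i d(y,a_i)$) produces a contradiction with $nr < \sum_i d(x_i,a_{s(i)})$. You bypass the induction and the permutation trick entirely: you pick $a=a_{i^\ast}$ with $i^\ast$ minimising $d(y,a_i)$ and verify, with a single chain of inequalities for each $i$, that $d(x_i,a)\le d(x_i,y)\le r$. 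The minimality of $d(y,a)$ is what makes the correction term $\tfrac12\bigl(d(y,a)-d(a_i,y)\bigr)$ nonpositive and is exactly the place where the permutation argument is replaced by an extremal choice. Your argument is shorter and makes the role of the strong simplex assumption more transparent; in effect, taking the minimum is a pointwise substitute for the paper's averaging-over-a-permutation step. The remaining reduction (obstruction complexes are standard simplices under Assumption~\ref{SADSDGHDSG}; every $\sigma$ in $(\text{sk}_2\text{VR}_r(Z))\setminus P$ has $|\sigma\cap X|=1$ or $|\sigma\cap Y|=1$; apply Proposition~\ref{sfsfdgfdhfg} with $n=1$) is the same in both.
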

\begin{proof}
Since the  strong simplex assumption~\ref{sasdfgsdfhgjd} is satisfied, then so is the simplex assumption~\ref{SADSDGHDSG}  and consequently any obstruction complex  $\text{\rm St}(\sigma,A)$ is a simplex. Thus  $\text{\rm St}(\sigma,A)$  is contractible if and only if it is non empty.

We are going to show by induction on the dimension of a simplex   a more general   statement: 
 \smallskip

\noindent
{\em 
Under the assumption of Theorem~\ref{easdfgsdghj}, for every simplex  $\sigma$ in $\text{\rm VR}_r(Z)\setminus P$  for which  
  $\sigma \cap X=\{x_1,\dots,x_n\}$ and  $\sigma\cap Y=\{y\}$, if $(a_1,\ldots,a_n)$ is a sequence in $A$ such that
  $d(x_i,y)=d(x_i,a_i)+d(a_i,y)$ for every $i$, then there is $l$  for which $a_l$ is in $\text{\rm St}(\sigma,A)$ ($d(x_i,a_l)\leq r$ for all $i$).}
 \smallskip

If $\sigma=\{x,y\}$ is such an edge, then the statement is clear.

Let $n>1$ and assume that the statement is true for all relevant simplices  of dimension smaller than $n$.
Let $\sigma$ be in $\text{\rm VR}_r(Z)\setminus P$ be such   that 
  $\sigma \cap X=\{x_1,\dots,x_n\}$ and  $\sigma\cap Y=\{y\}$. 
  Choose a sequence $(a_1,\ldots,a_n)$ in $A$ such that
  $d(x_i,y)=d(x_i,a_i)+d(y,a_i)$ for every $i$.  
 
  By the inductive assumption, for every $j=1,\ldots, n$, the statement is true for  $\tau_j=\sigma_j\setminus\{x_j\}$ and the sequence $(a_1,\ldots,\widehat{a_j},\ldots,a_n)$
  obtained from  $(a_1,\ldots,a_n)$ 
  by removing its  $j$-th element.  Thus for every $j=1,\ldots, n$, there is $a_{s(j)}$  such that $s(j)\not= j$ and $d(x_i,a_{s(j)})\leq r$ for all
  $i\not=j$. If, for some $j$, $d(x_j,a_{s(j)})\leq r$, then $a_{s(j)}$ would be a vertex in   $\text{\rm St}(\sigma,A)$, proving the statement.
  Assume $d(x_j,a_{s(j)})> r$ for all $j$.  If $j\not=j'$, then $d(x_j, a_{s(j')})\leq r$ and  $d(x_j, a_{s(j)})> r$, and hence 
   $a_{s(j)}\not=a_{s(j')}$.  It follows that $s$ is a permutation of the set $\{1,\ldots,n\}$. This together with the strong simplex assumption  leads to a contradictory inequality:
   \begin{align*}
nr & < \sum _{i=1}^n d(x_i,a_{s(i)}) \le \sum _{i=1}^n \left(d(x_i,a_{i})+d(a_i,a_{s(i)})\right)\leq\\
& \le \sum _{i=1}^n \left(d(x_i,a_i)+\frac{1}{2}d(a_i,y)+\frac{1}{2}d(y,a_{s(i)})\right)\le \\
&\le \sum _{i=1}^n  \left(d(x_i,a_i)+d(y,a_i)\right) \le nr
\end{align*}

Note that, for any simplex $\sigma$  in  $\text{\rm sk}_{2}\text{VR}(Z)\setminus P$, either $|\sigma\cap X|=1$ or $|\sigma\cap Y|=1$.
Thus, for any such simplex, the obstruction complex $\text{St}(\sigma,A)$ is contractible. We can  then use Proposition~\ref{sfsfdgfdhfg}
to conclude that the homotopy fibers of the inclusion $\text{\rm VR}_r(X)\cup \text{\rm VR}_r(Y) \hookrightarrow \text{\rm VR}_r(Z)$
are  simply connected. 
\end{proof}

The conclusion of Theorem~\ref{easdfgsdghj} is sharp.
Its  assumptions  are not enough to assure  that  the homotopy fibers of the map
$\text{\rm VR}_r(X)\cup \text{\rm VR}_r(Y) \hookrightarrow \text{\rm VR}_r(Z)$  are $2$ connected.
We finish this section with an example illustrating this  fact.

\begin{point}
Let    $Z=\{x_1,x_2,a_{11},a_{12},a_{21}, a_{22},y_1,y_2\}$,  
$X=\{x_1,x_2,a_{11},\allowbreak a_{12},a_{21},a_{22}\}$ and $Y=\{y_1,y_2,a_{11},a_{12},a_{21},a_{22}\}$.  
Consider the distance  function $d$ on $Z$ described by the following table:
\[\begin{array}{c|c|c|c|c|c|c|c}
 &x_2& a_{11}& a_{12} &a_{21}& a_{22}&y_1&y_2\\ \hline
x_1  & 6 & 3 & 5 &  7 & 9 &8 & 8 \\ \hline
x_2   & 0 & 9 & 7 & 5 & 3 & 8 & 8 \\ \hline
a_{11}  & & 0 &4 & 4 & 6 & 5 & 7 \\ \hline
a_{12}  & & & 0 & 6 & 4 & 9 & 3 \\ \hline
a_{21}  & & & & 0 & 4 & 3 & 9 \\ \hline
a_{22}  & & & & & 0 & 7 & 5 \\ \hline
y_1      & & & & & & 0 & 6\\
\end{array}\]
The distance $d$ satisfies the triangular inequality and hence $(Z,d)$ is a metric space. Furthermore
$d(x_i,y_j)=d(x_i,a_{ij})+d(a_{ij},y_j)$ for any $i$ and $j$. Thus $(Z,d)$ is a metric gluing  of $X$ and $Y$. 
The metric space $(Z,d)$ can be represented by the following diagram, where the 
continuous  lines or no line indicate distance  $8$ or smaller and   the dotted lines indicate distance $9$:
\[\begin{tikzcd}[labels=description]
 & &  & a_{11} \ar[dash]{ddd}  & & & & y_1\ar[dash, bend right=18pt]{llll}\ar[dash]{ddd}
 \ar[dash, bend left=18pt]{dll}
\ar[dash,dotted, bend right=25pt]{llllddd}
 \\
  & & & & & a_{21}\ar[dash]{ddd}
  \ar[dash,crossing over,from=ull]
 \\
  & &\\
x_1\ar[dash,bend left=18pt]{uuurrr}\ar[dash,dotted, bend right=7pt]{drrrrr}
\ar[dash,bend right=18pt]{rrr}\ar[dash]{drr}
& & & a_{12}\ar[dash]{drr} & & &  & y_2\ar[dash, bend right=18pt]{llll}\ar[dash, bend left=15pt]{dll}
\ar[dash,dotted, bend right=3pt]{uull}
\\
 & &  x_2\ar[dash,bend left=18pt,crossing over]{uuurrr}\ar[dash,bend right=15pt]{rrr}\ar[dash,dotted, bend left=12pt,crossing over]{uuuur}
& & & a_{22}
\end{tikzcd}
\]                                                                                              
By direct calculation one checks that, for $r=8$ and $Z=X\cup Y$, the metric space $(Z,d)$ satisfies the strong simplex assumption~\ref{sasdfgsdfhgjd}. However,  the complex $\text{\rm VR}_8(X)\cup \text{\rm VR}_8(Y)$ is contractible and $ \text{\rm VR}_8(Z)$ is weakly equivaent to $S^3$ ($3$-dimensional sphere).The homotopy fiber of
$\text{\rm VR}_8(X)\cup \text{\rm VR}_8(Y)\subset \text{\rm VR}_8(Z)$ is therefore  weakly equivalent to the loops space $\Omega S^3$ and hence is not $2$ connected.

Here are steps that one might use to see that $ \text{\rm VR}_8(Z)$ is weakly equivalent to $S^3$.
Consider the simplex $\{x_1,x_2\}$ in $ \text{\rm VR}_8(Z)$. According to Corollary~\ref{asfgdsgfjghkkl}, there is a homotopy  cofiber  sequence:
\[\Sigma \text{St}\left(\{x_1,x_2\}, \text{VR}_8(Z\setminus\{x_1,x_2\}\right)\to \text{VR}_8(Z\setminus\{x_1\})\cup  \text{VR}_8(Z\setminus\{x_2\})\to \text{VR}_8(Z) \]
The complexes in this sequence have the following homotopy types:
\begin{itemize}
\item $\text{St}\left(\{x_1,x_2\}, \text{VR}_8(Z\setminus\{x_1,x_2\}\right)$ is weakly equivalent to the circle $S^1$;
\item $\text{VR}_8(Z\setminus\{x_1\})$, $ \text{VR}_8(Z\setminus\{x_2\})$, $\text{VR}_8(Z\setminus\{x_1.x_2\})$
are contractible;
\item the above implies that $\text{VR}_8(Z\setminus\{x_1\})\cup  \text{VR}_8(Z\setminus\{x_2\})$  is also contractible;
\item we can then use the cofiber sequence above to conclude  $\text{VR}_8(Z)$ is weakly  equivalent to  $\Sigma^2 S^1\simeq S^3$ as claimed.
\end{itemize}

\end{point}

\section{Vietoris-Rips of 9 points on a circle}
In~\cite{MR3096593,MR3673078,MR3530967} a lot of techniques were introduced aiming at  describing homotopy types of certain Vietoris-Rips complexes, particularly for metric graphs built from points on a circle. 
In this section we showcase how our techniques can be used to describe the homotopy type of  one of such examples.
Consider a metric space given by $9$ points, $Z=\{z_i\}_i$ with  the following distances between them:
 \[\begin{array}{c|c|c|c|c|c|c|c|c}
 & z_2& z_3 &z_4& z_5&z_6&z_7&z_8&z_9\\\hline
z_1  & 1 & 2 &  3 & 4 &4 &3&2&1 \\ \hline
z_2   &0&1&2&3&4&4&3&2 \\ \hline
z_3  &&0&1&2&3&4&4&3 \\ \hline
z_4  &&&0&1&2&3&4&4\\ \hline
z_5  &&&&0&1&2&3&4 \\ \hline
z_6  &&&&&0&1&2&3 \\ \hline
z_7  &&&&&&0&1&2\\ \hline
z_8  &&&&&&&0&1\\ 
 
\end{array}\]
This metric space can be visualised as a metric graph consisting of   $9$ points on a circle, where distances between adjacent points are set to be  $1$:
\[
\begin{tikzpicture}[scale=0.6]
\draw [thick,fill=white] (0,0) circle (2.5cm);
\foreach \a in {1,2,...,9}{
    \node [font=\tiny, fill=white,inner sep=1pt] at (\a*360/9: 2.5cm){$\textstyle{z}_{\a}$};}
\draw  [thick, shorten <= 0.25cm, shorten >= 0.25cm](1*360/9: 2.5cm) - -  (4*360/9: 2.5cm);
\draw [ thick, shorten <= 0.25cm, shorten >= 0.25cm] (1*360/9: 2.5cm) - -  (7*360/9: 2.5cm);
\draw  [thick, shorten <= 0.25cm, shorten >= 0.25cm](7*360/9: 2.5cm) - -  (4*360/9: 2.5cm);
\end{tikzpicture}
\]
We are going to illustrate how to  use our techniques to prove  that $\text{VR}_3(Z)$ is weakly equivalent to the wedge $S^2\vee S^2$ of two $2$-dimensional spheres, a result already present in the mentioned work of Adamaszek et al.
Set $X:=\{ z_1,z_2,z_4,z_5,z_7,z_8\}$ and $Y:=\{z_1,z_3,z_4,z_6,z_7,z_9\}$. Note that  $X\cup Y=Z$ and
$A:=X\cap Y=\{z_1,z_4,z_7\}$.

Note that $\text{VR}_3(X\cap Y)$ is contractible, and thus  $\text{VR}_3(X)\cup \text{VR}_3(Y)$ is homotopy equivalent to the  wedge $\text{VR}_3(X)\lor \text{VR}_3(Y)$. Furthermore we claim that all the obstruction complexes $\text{St}(\sigma,A)$
are contractible for all simplices $\sigma$ in $\text{VR}_3(Z)$ such that $\sigma\cap X\not=\emptyset$, $\sigma\cap Y\not=\emptyset$,
and $\sigma\cap X\cap Y=\emptyset$.
For example if  $\sigma=\{x_2,x_3\}$ then  $\text{St}(\sigma,A)=\Delta[x_1,x_7]$  which is contractible.
According to Corollary ~\ref{sdfdfghsd}, the inclusion  $VR_3(X)\cup VR_3(Y)\subset VR_3(Z)$ is therefore a weak equivalence
and consequently  $VR_3(Z)$ has the homotopy type of the wedge $\text{VR}_3(X)\lor \text{VR}_3(Y)$. 
The metric spaces $X$ and $Y$ are isometric, and hence the corresponding Vietoris-Rips complexes are isomorphic. It remains to show that $VR_3(X)$ has the homotopy type of $S^2$.  Consider  $X'=\{z_1,z_5\}$ and $X''=\{z_2,z_4,z_7,z_8\}$.
Note that $X=X'\coprod X''$, $\text{VR}_3(X')$ has the homotopy type of $S^0$ and $\text{VR}_3(X'')$ has the homotopy type of $S^1$. Finally note that, for all simplices $\sigma$ in  $\text{VR}_3(X')$ and   $\mu$ in  $\text{VR}_3(X'')$,
the union $\sigma\cup \mu$ is a simplex in  $VR_3(X)$. This implies that $VR_3(X)$ is  the join of $\text{VR}_3(X')$ and $\text{VR}_3(X'')$
(see Paragraph~\ref{afsaasfagdf}) and hence it is weakly equivalent to $\Sigma (S^0\wedge S^1)\simeq S^2$.

\paragraph{\em Acknowledgments.}    A.\ Jin and F.\ Tombari 
were supported by the Wallenberg AI, Autonomous System and Software Program (WASP) funded by Knut and Alice Wallenberg Foundation.
W.\ Chach\'olski was partially supported by VR and WASP.
M.\ Scolamiero was partially supported by Brummer \& Partners MathDataLab  and WASP.
\bibliographystyle{spmpsci}      
\bibliography{references.bib}   

\begin{thebibliography}{10}
\providecommand{\url}[1]{{#1}}
\providecommand{\urlprefix}{URL }
\expandafter\ifx\csname urlstyle\endcsname\relax
  \providecommand{\doi}[1]{DOI~\discretionary{}{}{}#1}\else
  \providecommand{\doi}{DOI~\discretionary{}{}{}\begingroup
  \urlstyle{rm}\Url}\fi

\bibitem{MR3096593}
Adamaszek, M.: Clique complexes and graph powers.
\newblock Israel J. Math. \textbf{196}(1), 295--319 (2013)

\bibitem{MR3673078}
Adamaszek, M., Adams, H.: The {V}ietoris-{R}ips complexes of a circle.
\newblock Pacific J. Math. \textbf{290}(1), 1--40 (2017)

\bibitem{MR3530967}
Adamaszek, M., Adams, H., Frick, F., Peterson, C., Previte-Johnson, C.: Nerve
  complexes of circular arcs.
\newblock Discrete Comput. Geom. \textbf{56}(2), 251--273 (2016)

\bibitem{MR3824247}
Adamaszek, M., Adams, H., Gasparovic, E., Gommel, M., Purvine, E., Sazdanovic,
  R., Wang, B., Wang, Y., Ziegelmeier, L.: Vietoris-{R}ips and \v{C}ech
  complexes of metric gluings.
\newblock In: 34th {I}nternational {S}ymposium on {C}omputational {G}eometry,
  \emph{LIPIcs. Leibniz Int. Proc. Inform.}, vol.~99, pp. Art. No. 3, 15

\bibitem{NewHenry}
Adamaszek, M., Adams, H., Gasparovic, E., Gommel, M., Purvine, E., Sazdanovic,
  R., Wang, B., Wang, Y., Ziegelmeier, L.: On homotopy types of vietoris-rips
  complexes of metric gluings.
\newblock arXiv:1712.06224  (2020).
\newblock \urlprefix\url{https://arxiv.org/abs/1712.06224}

\bibitem{MR3317230}
Bubenik, P.: Statistical topological data analysis using persistence
  landscapes.
\newblock J. Mach. Learn. Res. \textbf{16}, 77--102 (2015)

\bibitem{MR1861130}
Cagliari, F., Ferri, M., Pozzi, P.: Size functions from a categorical
  viewpoint.
\newblock Acta Appl. Math. \textbf{67}(3), 225--235 (2001)

\bibitem{MR2476414}
Carlsson, G.: Topology and data.
\newblock Bull. Amer. Math. Soc. (N.S.) \textbf{46}(2), 255--308 (2009)

\bibitem{MR1397724}
Chach\'{o}lski, W.: Closed classes.
\newblock In: Algebraic topology: new trends in localization and periodicity
  ({S}ant {F}eliu de {G}u\'{\i}xols, 1994), \emph{Progr. Math.}, vol. 136, pp.
  95--118. Birkh\"{a}user, Basel (1996)

\bibitem{MR4057607}
Chach\'{o}lski, W., Riihim\"{a}ki, H.: Metrics and {S}tabilization in {O}ne
  {P}arameter {P}ersistence.
\newblock SIAM J. Appl. Algebra Geom. \textbf{4}(1), 69--98 (2020)

\bibitem{MR0268888}
Curtis, E.: Simplicial homotopy theory.
\newblock Matematisk Institut, Aarhus Universitet, Aarhus (1967)

\bibitem{Delf-Edel}
Delfinado, C.J.A., Edelsbrunner, H.: An incremental algorithm for betti numbers
  of simplicial complexes on the 3-sphere.
\newblock Comput. Aided Geom. Design \textbf{12}, 771--784 (1995)

\bibitem{MR1320991}
Dror~Farjoun, E.: Cellular inequalities.
\newblock In: The \v{C}ech centennial ({B}oston, {MA}, 1993), \emph{Contemp.
  Math.}, vol. 181, pp. 159--181. Amer. Math. Soc., Providence, RI (1995)

\bibitem{MR2405684}
Edelsbrunner, H., Harer, J.: Persistent homology---a survey.
\newblock In: Surveys on discrete and computational geometry, \emph{Contemp.
  Math.}, vol. 453, pp. 257--282

\bibitem{MR1392221}
Farjoun, E.D.: Cellular spaces, null spaces and homotopy localization,
  \emph{Lecture Notes in Mathematics}, vol. 1622.
\newblock Springer-Verlag, Berlin (1996)

\bibitem{MR1434543}
Ferri, M.: Size functions: a new topological approach to shape analysis.
\newblock In: Geometry and topology of submanifolds, {VII} ({L}euven,
  1994/{B}russels, 1994), pp. 299--304. World Sci. Publ., River Edge, NJ (1995)

\bibitem{Frosini-Landi}
Frosini, P., Landi, C.: Size theory as a topological tool for computer vision.
\newblock Pattern Recognition and Image Analysis \textbf{9}, 596--603 (1999)

\bibitem{MR1711612}
Goerss, P.G., Jardine, J.F.: Simplicial homotopy theory, \emph{Progress in
  Mathematics}, vol. 174.
\newblock Birkh\"{a}user Verlag, Basel (1999)

\bibitem{MR1368659}
Hausmann, J.C.: On the {V}ietoris-{R}ips complexes and a cohomology theory for
  metric spaces.
\newblock In: Prospects in topology ({P}rinceton, {NJ}, 1994), \emph{Ann. of
  Math. Stud.}, vol. 138, pp. 175--188. Princeton Univ. Press, Princeton, NJ
  (1995)

\bibitem{MR516607}
Latch, D.M., Thomason, R.W., Wilson, W.S.: Simplicial sets from categories.
\newblock Math. Z. \textbf{164}(3), 195--214 (1979)

\bibitem{nerveatnlab}
nlab: nerve.
\newblock \urlprefix\url{https://ncatlab.org/nlab/show/nerve}

\bibitem{MR3735858}
Scolamiero, M., Chach\'{o}lski, W., Lundman, A., Ramanujam, R., \"{O}berg, S.:
  Multidimensional persistence and noise.
\newblock Foundations of Computational Mathematics \textbf{17}(6), 1367--1406
  (2017).
\newblock \doi{10.1007/s10208-016-9323-y}

\bibitem{Scopigno04persistencebarcodes}
Scopigno, R., Zorin, D., Carlsson, G., Zomorodian, A., Collins, A., Guibas, L.:
  Persistence barcodes for shapes (2004)

\bibitem{MR1400226}
Taubes, C.H.: Metrics, connections and gluing theorems, \emph{CBMS Regional
  Conference Series in Mathematics}, vol.~89.
\newblock Published for the Conference Board of the Mathematical Sciences,
  Washington, DC; by the American Mathematical Society, Providence, RI (1996)

\end{thebibliography}

%
%

\end{document}